\documentclass[12pt,a4paper]{article}
\usepackage{amsfonts}

\usepackage{mathrsfs}
\usepackage[pagewise]{lineno}

\usepackage{stmaryrd}
\usepackage{amsmath}
\usepackage{amssymb}
\usepackage{xcolor}
\usepackage{bbm}
\usepackage{mathrsfs}
\usepackage{graphicx}
\usepackage{lipsum}

\usepackage{enumerate}
\usepackage{theorem}
\usepackage{indentfirst}
\makeatletter
\def\tank#1{\protected@xdef\@thanks{\@thanks
        \protect\footnotetext[0]{#1}}}
\def\bigfoot{

    \@footnotetext}
\makeatother

\topmargin=-1cm
\oddsidemargin=-1mm
\evensidemargin=-1mm
\textwidth=165mm
\textheight=250mm

\newcommand{\ea}{\end{array}}

\newtheorem{theorem}{Theorem}[section]
\newtheorem{hypothesis}{Hypothesis}[section]

\newtheorem{lemma}{Lemma}[section]
\newtheorem{definition}{Definition}[section]
\newtheorem{Rem}{Remark}[section]

{\theorembodyfont{\rmfamily}
}

\newenvironment{proof}{Proof.}

\def \eref#1{\hbox{(\ref{#1})}}

\setcounter{equation}{0}

\allowdisplaybreaks

\begin{document}
\title{{\Large \bf McKean-Vlasov SPDEs driven by Poisson random measure: Well-posedness and large deviation principle }}
\author{{Yuhang Jiang$^{a}$},~~{Jinming Li$^{b}$},~~{Shihu Li$^{a}$}
\\
 \small  $a.$ School of Mathematics and Statistics, Jiangsu Normal University, 221116, Xuzhou, China \\
\small $b.$ School of mathematical sciences, Shanghai Jiao Tong University, 200240, Shanghai, China}

\date{}
\maketitle
\begin{abstract}
In this work, we investigate the McKean-Vlasov stochastic partial differential equations driven by Poisson random measure.
By adapting the variational framework, we prove the well-posedness and large deviation principle for a class of McKean-Vlasov stochastic partial differential equations with monotone coefficients. The main results can be applied to quasi-linear McKean-Vlasov equations such as distribution dependent stochastic porous media equation and stochastic $p$-Laplace equation. Our proof is based on the weak convergence approach introduced by Budhiraja et al.~for Poisson random measures, the time discretization procedure and relative entropy estimates. In particular, we succeed in dropping the compactness assumption of embedding in the Gelfand triple in order to deal with the case of bounded and unbounded domains in applications.
\end{abstract}
 {\bf Keywords:} SPDE; McKean-Vlasov equation;  L\'{e}vy noise; Well-posedness; Large deviation principle; stochastic $p$-Laplace equation
\vspace{2mm}

\noindent {\bf Mathematics Subject Classification (2010):} {60H15; 60F10}

\section{Introduction}

McKean-Vlasov stochastic partial differential equations (SPDEs), also called mean-field SPDEs or distribution dependent SPDEs,
have attracted more and more attention in recent years. 
Roughly speaking, these are SPDEs where their coefficients also depend on the distribution of solutions. McKean-Vlasov SPDEs can be used to characterize the limiting behaviors of $N$-interacting particle systems of mean-field type while $N$ goes to infinity. This kind of macroscopic limit behavior for the interacting particle system is usually referred to the propagation of chaos in the literature.
For example, Kallianpur and Xiong \cite{KX} studied the propagation of chaos problem for the interacting system of infinite-dimensional SDEs driven by Poisson random measures. Shen et al.~\cite{SSZZ} studied the large $N$ limits of  $O(N)$  linear sigma model posed over $\mathbb{T}^d$ for $d=1,2$, where the limit is characterized by a McKean-Vlasov singular SPDE.
Criens in \cite{C1} established the propagation of chaos for the weakly coupled SPDEs whose coefficients satisfy linear growth and Lipschitz type conditions.  Angeli et al. \cite{ABKO} studied the
well-posedness and stationary solutions of McKean-Vlasov SPDEs.
We also refer to \cite{CKS,ES1,HHL,HLL23,HKXZ,L18,MSSZ} for further studies concerning McKean-Vlasov SPDEs.

The large deviation principle (LDP) mainly investigates the
asymptotic property of remote tails of a family of probability distributions.
Due to its wide applications in extremal events arising
in risk management, mathematical finance, statistical mechanics, quantum physics
and many other areas, large deviation theory has become an important topics in probability theory. The small
perturbation type LDP (also called Freidlin-Wentzell's LDP) for stochastic differential equations in finite dimensional case is established by Freidlin and Wentzell in the pioneering work \cite{FW}, which has been widely investigated in recent years, we refer the readers to the classical monographs \cite{DZ,S,V66} and
references therein for the detailed exposition on the background and applications of large deviation theory.

 The Freidlin-Wentzell type LDP for McKean-Vlasov SDEs and SPDEs driven by Gaussian noise has been extensively investigated in the literature. Herrmann et al.~\cite{HIP} obtained the LDP  in path space with the uniform topology under the superlinear growth and coercivity
hypothesises of the drift. Dos Reis et al.~\cite{DST} also investigated the LDP in both uniform and H\"{o}lder topologies via assuming that the coefficients satisfy some extra time H\"{o}lder continuity conditions. Delarue et al. \cite{DLR20} first established
a weak LDP for McKean-Vlasov systems in the presence of common noise, and on the absence of common noise, they successfully upgraded this to a full LDP and obtain new concentration estimates for McKean-Vlasov systems.
Recently, based on the modified weak convergence criteria in \cite{LSZZ}, Hong et al. \cite{HLL21} studied the LDP for a class of nonlinear McKean-Vlasov SPDEs driven by multiplicative Gaussian noise, where the variational framework is similar to the framework of this article, and this result is extended to more general local monotonic frameworks in \cite{HHL}. The interested readers can refer to \cite{FYY23,HLLS,SZW} and references therein for recent progress on LDP for McKean-Vlasov equations.

Note that all of the above work is about large deviations of McKean-Vlasov equations with Gaussian noise.
However, Gaussian noise inadequately characterizes certain real-world models. From a particle systems perspective, both individual particles and their collective population often exhibit sudden jumps in numerous scenarios.
Thus, it is natural to consider the McKean-Vlasov SPDE with L\'{e}vy noise through utilizing Poisson random measures, and ask what is the asymptotic behavior of McKean-Vlasov SPDE with jumps?

However, up to now, the LDP for nonlinear McKean-Vlasov SPDEs with L\'{e}vy noise has not been considered in the previous works. The main aim of this work is to investigate the LDP for the following McKean-Vlasov SPDEs driven by Poisson random measure
\begin{equation}\label{YE00}
\left\{ \begin{aligned}	&dX^\varepsilon(t)=A(t,X^\varepsilon(t),\mathcal{L}_{X^\varepsilon(t)})dt
+ \varepsilon \int_Z  f(t,X^\varepsilon(t-),\mathcal{L}_{X^\varepsilon(t)},z)\widetilde{N}^{\varepsilon ^{-1}}(dt,dz),\\
	&X^\varepsilon(0)=x \in H,
\end{aligned} \right.
\end{equation}
where $\varepsilon >0$ is a small parameter, $\mathcal{L}_{X^\varepsilon(t)}$ stands for the distribution of solution $X^\varepsilon(t)$, $\widetilde{N}^{\varepsilon ^{-1}}$ is a compensated Poisson random measure on $[0,T] \times Z$ with a $\sigma$-finite intensity measure $\varepsilon^{-1} \lambda_T \otimes \theta,$ where $\lambda_T$ is the Lebesgue measure on $[0,T]$ and $\theta$ is a $\sigma$-finite measure on $Z.$ The precise conditions on the coefficients $A$ and $f$ are given in section \ref{PaM} below. In order to investigate the LDP for nonlinear
McKean-Vlasov SPDEs with jumps, we adapt the generalized variational framework in this work, since we want to apply it to some quasi-linear type McKean-Vlasov SPDEs with monotone coefficients. The classical variational framework has been established by Pardoux, Krylov and Rozovskii (see e.g. \cite{KR,RS}), where they employed the famous monotonicity tricks to verify the existence and uniqueness of solutions for SPDEs fulfilling the classical monotonicity and coercivity assumptions. We refer the interested readers to \cite{G82,HHL,LR2,LR,PR07} and reference therein for the recent development in such framework. However, due to the fact that the coefficients of equation \eref{YE00} depend on the distribution of the solution, there are no results in the literature regarding the existence and uniqueness of the solutions.

To this end, by using the fixed-point approach, we first establish the existence and uniqueness of strong solutions for McKean-Vlasov SPDE (\ref{YE00}). The well-posedness result is new in the literature, which generalises the classical results (cf. \cite{G82}) to the case of distribution dependent SPDEs.
Then we aim to prove the LDP for equation (\ref{YE00}) driven by small multiplicative L\'{e}vy noise, whose proof is mainly based on the weak convergence criteria established in
\cite{LSZZ}. The weak convergence approach for SDE with Poisson random measure was first introduced by Budhiraja et al. \cite{BCD,BCG16,BDM11} which has
became a very powerful tool to study the LDP for finite/infinite-dimensional SDEs driven by L\'{e}vy noise. Recently,  in \cite{LSZZ}, the authors successfully developed the weakly convergent method for studying the LDP of McKean-Vlasov SDEs with jumps, which is fundamentally different from the classical SDEs.
Instead of proving exponential probability estimates, one main advantage of using the weak convergence method is that, one only needs to establish some priori moment estimates, which significantly simplifies the proof.

Of course, in the finite dimensional case, the well-posedness and LDP for McKean-Vlasov SDEs driven by L\'{e}vy noise have been analyzed in the literature.
However, to the best of our knowledge, this is the first result concerning the well-posedness and LDP for McKean-Vlasov quasi-linear SPDEs with L\'{e}vy noise. Based on the generalized variational framework, our main results are applicable to several McKean-Vlasov quasi-linear SPDEs driven by Poisson random measure, such as distribution dependent stochastic porous media type equations and stochastic $p$-Laplace type equations (see Sect. \ref{App} below). Note that the existing results cannot be applied to this type of models, both the well-posedness and LDP established in this work are new in the literature.

In comparison
to the finite dimensional case, in order to deal with the infinite dimensional McKean-Vlasov SPDEs in the variational framework, we need to derive some apriori estimates of solutions involving different spaces, which is quite different to the finite dimensional case. Another point is that we want to drop the compactness assumption of Gelfand triple.
To overcome such difficulty, the time discretization procedure and relative entropy estimates mainly inspired by the recent work of Wu and Zhai \cite{WZ} (see also \cite{WZZ24}) are also employed, but some new strategies are needed to handle the difficulties caused by distribution dependence.

We mention that the LDP for standard SPDEs (i.e.~distribution independent case) driven by Poisson random measure has been studied a lot in the literature. R\"{o}ckner and Zhang \cite{RZ} first studied stochastic evolution
equations driven by additive L\'{e}vy noise by proving exponential probability estimates, where the Lipschitz coefficients are considered. Afterwards, the LDP for SPDEs with highly nonlinear terms has been been widely investigated in the literature. For example, by employing the weak convergence method, Brze\'{z}niak et al. \cite{BPZ} establish a Freidlin-Wentzell-type LDP for the solutions of 2D stochastic
Navier-Stokes equations with jumps. Xiong and Zhai \cite{XZ} first derived a LDP result for a class of SPDEs with locally monotone coefficients driven by Poisson random measure
(see \cite{CG,HLL21b,MO21,TM22,WZ20,XCV24,ZZ15} and references therein for recent progress on LDP for various SPDE models).

It is also worth noting that there is a significant  difference between McKean-Vlasov SPDEs and standard SPDEs in studying LDP.
For instance, as an important part of the proof, we need to formulate the correct form of the skeleton equation.
Intuitively, as the parameter $\varepsilon\rightarrow0$ in system (\ref{YE00}), the noise term vanishes, then we can get the following partial differential equation
\begin{equation*}
\frac{d X^0(t)}{d t} = A (t, X^0(t),\mathcal{L}_{ X^0(t)}), \quad X^0(0) = x,
\end{equation*}
where $\mathcal{L}_{{X}^0(t)}=\delta_{{X}^0(t)}$ is the Dirac measure of $X^0(t)$.  Then for any $g \in S$ (see \ref{e03} below), $t \in [0,T]$, we consider the following skeleton equation
\begin{align}\label{ske1}
X^g(t)=&x+\int_0^tA(s,X^g(s),\mathcal{L}_{X^0(s)})ds\nonumber\\
&+\int_0^t\int_{Z}f(s,X^g(s),\mathcal{L}_{X^0(s)},z)(g(s,z)-1)\theta(dz)ds.
\end{align}
 Note that $\mathcal{L}_{{X}^0(t)}$ (instead of  $\mathcal{L}_{X^g(t)}$) shows up in the skeleton equation (\ref{ske1}) and is used to define the rate function of LDP. Since the measure $\mathcal{L}_{X^{\varepsilon}(t)}$ is not random, the convergence of these
measures is independent of the occurrence of a rare event for the random variable ${X^{\varepsilon}(t)}$.

This paper is organized as follows.
In section 2, we will recall some basic notations and introduce our main results. Then, we provide several illustrating examples in section 3. In section 4, we aim to prove our main results. For simplicity, $C_p$ used in this paper will denotes positive constant which may change from line to line, where the subscript of $C_p$ emphasize the dependence on parameter $p$.

\section{Preliminaries and main results}\label{PaM}
\setcounter{equation}{0}
 \setcounter{definition}{0}

In this section, we will first give some notations and Gelfend triples in subsection \ref{SN}. Then we will introduce Poisson random measures in subsection \ref{PRM}. In subsection \ref{DaMR}, we show some relevant definitions and present main results of this paper.

\subsection{Some notations}\label{SN}
Fixing $T \in (0,\infty),$ let $\lambda_T$ denote the Lebesgue measure on $[0,T]$ and $\lambda_\infty$ denote the Lebesgue measure on $[0,\infty).$ We will denote the Borel $\sigma$-field on metric space $\mathbb{H}$ by $\mathcal{B}(\mathbb{H}).$ Let $L^\infty ([0,T] , \mathbb{H})$ be the space of all $\mathbb{H}$-valued uniformly bounded measurable functions on $[0,T],$ $C ([0,T] , \mathbb{H})$ be the space of all $\mathbb{H}$-valued continuous functions on $[0,T]$ equipped with the topology of uniform convergence, and $D ([0,T] , \mathbb{H})$ be the space of all $\mathbb{H}$-valued c\'{a}dl\'{a}g functions on $[0,T]$ equipped with the usual Skorohod topology.

Next we introduce the Gelfand triples used in this article. Let $(H , \langle \cdot,\cdot\rangle_{H})$ be a Hilbert space. The daul space of $H$ is denoted by $H^*.$  Let $(V ,\|\cdot\|_V)$ be a reflexive Banach space such that $V \subset H$ continuously and densely. Then for its daul space $V^*$ follows that $H^* \subset V^*$ continuously and densely. According to Riesz isomorphism theorem, we have that
$$V \subset H \subset V^* $$
is a Gelfand triple. If $_{V^*}\langle\cdot,\cdot\rangle_V$ denotes the dualization between $V^*$ and $V,$ then
\begin{equation}\label{e203}
_{V^*}\langle z,v \rangle_V = \langle z,v \rangle_H,\quad \text{for all }z \in H, v \in V.
\end{equation}

In order to solve the problem of distribution dependence in the proof process of this paper, we next introduce Wasserstein distance. Let $\mathcal{P}(H)$ denote the space of all probability measures with weak topology on $H.$ Furthermore, we set
$$\mathcal{P}_2(H):=\Big\{ \mu \in \mathcal{P}(H):\mu(\| \cdot \|_H^2):=\int_H \| \xi\| _H^2 \mu(d \xi)< \infty \Big\},$$
then $\mathcal{P}_2(H)$ is a Polish space under the following $L^2$-Wasserstein distance
$$\mathbb{W}_{2,H} (\mu, \nu):=\inf_{\pi \in \mathcal{C}(\mu, \nu)}\Big( \int_{H \times H} \| \xi - \eta\|_H^2 \pi(d\xi,d\eta)\Big)^{\frac{1}{2}} , \mu, \nu \in \mathcal{P}_2(H),$$
where $\mathcal{C}(\mu, \nu)$ is a set of the coupling of measure $\mu$ and $\nu.$ And $\pi \in \mathcal{C}(\mu, \nu)$ is the probability measure of $H \times H,$ i.e.
$$\pi(\cdot \times H)=\mu,\pi(H \times \cdot)=\nu.$$
For any $0\leq s <t < \infty,$ $C([s,t];\mathcal{P}_2(H))$ denotes a set of continuous mappings from $[s,t]$ to $\mathcal{P}_2(H)$ under the measure $\mathbb{W}_{2,H}.$

\subsection{Poisson random measure}\label{PRM}
Let $Z$ be a locally compact Polish space, denote by $\mathcal{M}_{FC}(Z)$ the space of all nonnegative measure $\nu$ on $(Z,\mathcal{B}(Z))$ such that $\nu(K)<\infty$ for every compact $K$ in $Z$. Let $C_c(Z)$ be the space of continuous functions with compact support. Endow $\mathcal{M}_{FC}(Z)$ with the weakest topology such that for every $h \in C_c(Z),$ the function $$\mathcal{M}_{FC}(Z) \ni \theta \mapsto \langle h, \nu \rangle := \int _Z h(u) \nu (du)$$ is continuous. Then $\mathcal{M}_{FC}(Z)$ is a Polish space under this topology.

Throughout this paper, we will use this topology on $\mathcal{M}_{FC}(Z)$, and let  $\theta$ be a given $\sigma$-finite positive measure on $(Z, \mathcal{B}(Z))$ with $\theta \in \mathcal{M}_{FC}(Z)$.
Define $Z_T := [0,T] \times Z,$  $Y := Z \times [0,\infty )$ and $Y_T :=[0,T] \times Y.$ In the following, we will write $\mathbb{M} := \mathcal{M}_{FC}(Y_T).$ Let $\mathbb{P}$ denote the unique probability measure on $(\mathbb{M},\mathcal{B}(\mathbb{M}))$ such that the canonical map $\bar{N}:\mathbb{M} \mapsto \mathbb{M},$ $\bar{N}(m):= m,$ is a Poisson random measure with intensity measure $\bar{\theta}_T = \lambda_T \otimes \theta \otimes \lambda_\infty .$ The corresponding compensated Poisson random measure is denoted by $\widetilde{\bar{N}}.$

Let $\bar{\mathcal{F}}_t:= \sigma\{\bar{N}((0,s]\times A):0\leq s\leq t, A \in \mathcal{B}(Y)\},$ and let $\mathcal{F}_t$ denote the completion under $\mathbb{P}.$ Denote the predictable $\sigma$-field on $[0,T] \times \mathbb{M}$ with the filtration $\{\mathcal{F}_t:0 \leq t \leq T \}$ on $(\mathbb{M},\mathcal{B}(\mathbb{M}))$ by $\bar{\mathcal{P}}.$  Let $\mathcal{A}$ be the class of all $(\bar{\mathcal{P}} \otimes \mathcal{B}(Z)) / [0,\infty)$-measurable maps $\varphi : Z_T \times \mathbb{M} \to [0, \infty).$ For $\varphi \in \mathcal{A},$ define a counting process $N^\varphi$ on $Z_T$ by
\begin{equation}\label{e201}
N^\varphi((0,t]\times U) = \int _{(0,t]\times U \times (0,\infty)}1_{[0,\varphi(s,z)]}(r)\bar{N}(ds,dz,dr), \text{ for }t \in [0,T] \text{ and } U \in \mathcal{B}(Z) .
\end{equation}
$N^\varphi$ is called a controlled random measure, with $\varphi$ selecting the intensity for the points at location $z$ and times $s,$ in a possibly random but non-anticipating way. Analogously, we define a process
\begin{equation}\label{e202}
\widetilde{N}^\varphi((0,t]\times U) = \int _{(0,t]\times U\times (0,\infty)} 1_{[0,\varphi(s,z)]}(r)\widetilde{\bar{N}}(ds,dz,dr).
\end{equation}
When $\varphi (s,z) \equiv \varepsilon^{-1} \in (0,\infty),$ we write $N^\varphi = N^{\varepsilon^{-1}}$ and $\widetilde{N}^\varphi = \widetilde{N}^{\varepsilon ^{-1}}.$ Set $\theta_T:= \lambda_T \otimes \theta.$ Note that, with respect to $\mathbb{P},$ $N^{\varepsilon^{-1}}$ is a Poisson random measure on $Z_T$ with intensity measure $\varepsilon^{-1}\theta_T,$ and $\widetilde{N}^{\varepsilon ^{-1}}$ is the compensated Poisson random measure. In particular, we donate it by $\widetilde{N}$, when $\varepsilon\equiv1$.

Set $(\Omega,\mathcal{F})=(\mathbb{M},\mathcal{B}(\mathbb{M})).$ In the present paper, we study (\ref{YE00}) on the given probability space $(\Omega,\mathcal{F},\{\mathcal{F}_t,t \in [0,T]\},\mathbb{P}).$ Denote the expectation with respect to $\mathbb{P}$ by $\mathbb{E}.$

For $\varpi \in (0,\infty),$ define
\begin{align*}
\mathcal{H}^{\varpi}:=
&\Big\{ h:[0,T] \times Z \rightarrow \mathbb{R}^{+} : \forall
~\Gamma \in \mathcal{B}([0,T]) \otimes \mathcal{B}(Z),\\
&\text{with }\theta_T(\Gamma)< \infty, \text{we have }\int_{\Gamma} \exp ( \varpi h(s,z))\theta(dz)ds< \infty\Big\},
\end{align*}	
and denote by $\mathcal{H}^{\infty} = \bigcap_{\varpi \in (0,\infty)} \mathcal{H}^{\varpi}.$
Set
\begin{align*}
\mathcal{H}_2:=
&\Big\{ h:[0,T] \times Z \rightarrow \mathbb{R}^{+} : \exists~\delta > 0,s.t. ~\forall~\Gamma \in \mathcal{B}([0,T]) \otimes \mathcal{B}(Z),\\
&\text{with }\theta_T(\Gamma)< \infty, \text{we have }\int_{\Gamma} \exp ( \delta h^2(s,z))\theta(dz)ds< \infty\Big\},
\end{align*}	
then by \cite[Remark 3.2]{BCD}, we have
$$
\mathcal{H}_2 \subset \mathcal{H}^{\infty} .
$$
Denote
$$
L_2\left(\theta_T\right)=\Big\{h:[0, T] \times Z \rightarrow \mathbb{R}^{+}: \int_0^t \int_Z h^2(s, z) \theta(dz)ds<\infty\Big\}. \\
$$

\subsection{Main results}\label{DaMR}

Let $T > 0$ be fixed. For some measurable maps
\begin{align*}
& A:[0, T] \times V \times \mathcal{P}_2(H) \rightarrow V^*, \\
& f:[0, T] \times V \times \mathcal{P}_2(H) \times Z \rightarrow H ,
\end{align*}
we consider the following type of McKean-Vlasov stochastic evolution equation,
\begin{equation}\label{YE01}
\left\{ \begin{aligned}	&dX(t)=A(t,X(t),\mathcal{L}_{X(t)})dt
+  \int_Z  f(t,X(t-),\mathcal{L}_{X(t)},z)\widetilde{N}^{}(dt,dz),\\
	&X(0) = X_0.
\end{aligned} \right.
\end{equation}

In this paper, we impose that $A$ and $f$ satisfy the following assumptions.
\begin{hypothesis}\label{h1}
We suppose that there exists constants $\alpha>1,c \in \mathbb{R},$ and $\delta>0$ such that
\begin{enumerate}
\item [$(H1)$]For any $t \in [0,T] ,$ mapping
$$
V \times \mathcal{P}_{2}(H) \ni (x,\mu) \mapsto _{V^*}\langle A(t,x,\mu),y\rangle_{V}
$$
is continuous.

\item [$(H2)$]For any $x \in V,$ $\mu \in \mathcal{P}_2 (H)$ and $t \in [0,T]$,
$$
\begin{aligned}
&2 _{V^*}\langle A(t,x,\mu),x\rangle_{V}
\leq c (\|x\|_H^2 + \mu(\|\cdot\|_H^2) +1)- \delta \|x\|_V^\alpha .
\end{aligned}
$$

\item [$(H3)$]For any $x,y \in V$, $\mu ,\nu \in \mathcal{P}_2 (H)$ and $t \in [0,T]$,
$$
\begin{aligned}
2\,_{V^*}\langle A(t,x,\mu)-A(t,y,\nu),x-y\rangle_{V}
\leq c\|x-y\|_{H}^{2}+c \mathbb{W}_{2,H} (\mu,\nu)^2.
\end{aligned}
$$

\item [$(H4)$]For any $x \in V,$ $\mu \in \mathcal{P}_2 (H)$, $t \in [0,T]$ and $\alpha>1$,
$$
\|A(t,x,\mu)\|_{V^*}^{\frac{\alpha}{\alpha-1}}\leq c (\|x\|_V^\alpha +  \mu(\|\cdot\|_H^2) +1).
$$

\item [$(H5)$]There exist $l_1 \in \mathcal{H}^{\infty} \cap L_2(\theta_T)$ and $l_2 \in \mathcal{H}_2 \cap L_2(\theta_T)$ such that for all $(t,z)\in[0,T] \times Z,x,y\in V$,

  \item [$(i)$]  $$\|f(t,x,\mu,z)-f(t,y,\nu,z)\|_H \leq l_1 (t,z)(\|x-y\|_H+\mathbb{W}_{2,H} (\mu,\nu)),$$

  \item [$(ii)$] $$\|f(t,x,\mu,z)\|_H\leq l_2 (t,z)(\|x\|_H + \mu(\|\cdot\|_H^2)^\frac{1}{2}+1).$$
\end{enumerate}
\end{hypothesis}
\begin{Rem}
Hypothesis \ref{h1} is a fairly standard assumption when one considers the
existence and uniqueness of solutions to SPDEs under the classical variational framework, see e.g. \cite{KR,PR07,RS} for the Gaussian noise and \cite{BLZ,G82} for the jump noise.
 \end{Rem}

The first main result in this work is about the existence and uniqueness of solutions to Eq.~(\ref{YE01}).
\begin{theorem}\label{THY}
Suppose that $(H1)$-$(H5)$ hold. Then for any initial value $X_0\in L^2(\Omega; H)$, there exists a unique strong solution $X$ to Eq.~$(\ref{YE01}),$ i.e., $X=(X(t), t \in [0,T])$ is an $H$-valued c\'{a}dl\'{a}g $\mathcal{F}_t$-adapted process, and the following conditions are satisfied
$$X \in L^{\alpha}([0,T]\times \Omega;V))\cap L^2( \Omega;L^\infty([0,T];H)),$$
for any $t \in [0,T]$,
$$X(t)= X_0 +\int_0^tA(s,X(s),\mathcal{L}_{X(s)})ds+ \int_0^t\int_{Z}f(s,X(s-),\mathcal{L}_{X(s)},z)\widetilde{N}(ds,dz),$$
holds $\mathbb{P}$-a.s..
\end{theorem}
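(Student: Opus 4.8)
The plan is to prove Theorem \ref{THY} by a fixed-point argument in the space of probability-measure flows, combined with the classical variational existence and uniqueness theory for SPDEs with jumps (in the style of \cite{G82}) applied to the "frozen-distribution" equations. Concretely, fix $\mu\in C([0,T];\mathcal{P}_2(H))$, viewed as a prescribed deterministic flow of marginal laws. Substituting $\mu_t$ for $\mathcal{L}_{X(t)}$ in \eqref{YE01} yields a standard (distribution-independent) stochastic evolution equation
$$
dX^\mu(t)=A(t,X^\mu(t),\mu_t)\,dt+\int_Z f(t,X^\mu(t-),\mu_t,z)\,\widetilde N(dt,dz),\qquad X^\mu(0)=X_0.
$$
Under $(H1)$–$(H5)$, the coefficients $(t,x)\mapsto A(t,x,\mu_t)$ and $(t,x,z)\mapsto f(t,x,\mu_t,z)$ satisfy the usual hemicontinuity, coercivity, monotonicity and growth assumptions of the classical variational framework for jump SPDEs (the $\mu$-terms are bounded by $\sup_t\mu_t(\|\cdot\|_H^2)<\infty$ and contribute only to the constant $c$), so that equation admits a unique strong solution $X^\mu$ with $X^\mu\in L^\alpha([0,T]\times\Omega;V)\cap L^2(\Omega;L^\infty([0,T];H))$ and càdlàg paths. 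This defines a map $\Phi:\mu\mapsto\big(t\mapsto\mathcal{L}_{X^\mu(t)}\big)$; I must show $\Phi$ maps into $C([0,T];\mathcal{P}_2(H))$ and has a unique fixed point.

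The key estimates proceed in three steps. First, an a priori bound: applying Itô's formula to $\|X^\mu(t)\|_H^2$, using the coercivity $(H2)$, the growth $(H5)(ii)$, the Burkholder–Davis–Gundy inequality for the jump martingale, and Gronwall's lemma, I obtain
$$
\mathbb E\Big[\sup_{t\le T}\|X^\mu(t)\|_H^2\Big]+\mathbb E\int_0^T\|X^\mu(t)\|_V^\alpha\,dt\le C\Big(1+\mathbb E\|X_0\|_H^2+\int_0^T\mu_t(\|\cdot\|_H^2)\,dt\Big),
$$
with $C$ independent of $\mu$; in particular $\Phi(\mu)_t\in\mathcal{P}_2(H)$ for every $t$, and time-continuity of $t\mapsto\mathcal{L}_{X^\mu(t)}$ in $\mathbb{W}_{2,H}$ follows from the $L^2$-continuity of the càdlàg solution together with the moment bound (no fixed jump times have positive mass since the intensity is $\sigma$-finite and atomless in time). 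Second, a stability/contraction estimate: given $\mu,\nu\in C([0,T];\mathcal{P}_2(H))$, write $D(t)=X^\mu(t)-X^\nu(t)$, apply Itô's formula to $\|D(t)\|_H^2$, and use the monotonicity $(H3)$ together with the Lipschitz bound $(H5)(i)$. The crucial observation is that the noise coefficients are genuinely Lipschitz in the distribution argument, so
$$
\mathbb E\|D(t)\|_H^2\le C\int_0^t\mathbb E\|D(s)\|_H^2\,ds+C\int_0^t\mathbb{W}_{2,H}(\mu_s,\nu_s)^2\,ds,
$$
and since $\mathbb{W}_{2,H}(\mathcal{L}_{X^\mu(s)},\mathcal{L}_{X^\nu(s)})^2\le\mathbb E\|D(s)\|_H^2$, Gronwall gives $\sup_{t\le T}\mathbb{W}_{2,H}(\Phi(\mu)_t,\Phi(\nu)_t)^2\le C_T\int_0^T\mathbb{W}_{2,H}(\mu_s,\nu_s)^2\,ds$. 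Third, this integral inequality is not an immediate contraction, so I would iterate it (or equivalently restrict to $C([0,\tau];\mathcal{P}_2(H))$ for $\tau$ small enough that $C_\tau\tau<1$, get a local fixed point, and patch over $[0,T]$ by finitely many steps since the constant is uniform), yielding a unique $\mu^*$ with $\Phi(\mu^*)=\mu^*$; then $X^{\mu^*}$ is the desired strong solution of \eqref{YE01}, and uniqueness for \eqref{YE01} follows because any solution $X$ has $\mathcal{L}_{X(\cdot)}$ a fixed point of $\Phi$, hence equals $\mu^*$, hence $X=X^{\mu^*}$ by the pathwise uniqueness of the frozen equation.

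I anticipate the main obstacle to be the rigorous justification of Itô's formula for $\|X^\mu(t)\|_H^2$ in the variational setting with jumps when the embedding $V\hookrightarrow H$ is not assumed compact. One must argue that the solution $X^\mu$, which a priori lives in $L^\alpha([0,T]\times\Omega;V)$ with an $L^{\alpha/(\alpha-1)}([0,T]\times\Omega;V^*)$ drift and an $H$-valued martingale part, has an $H$-valued càdlàg modification for which the Itô formula
$$
\|X^\mu(t)\|_H^2=\|X_0\|_H^2+\int_0^t 2\,{}_{V^*}\langle A(s,X^\mu(s),\mu_s),X^\mu(s)\rangle_V\,ds+\text{(martingale)}+\text{(jump compensator)}
$$
holds; this is the content of the Gyöngy–Krylov-type Itô formula for jump SPDEs and requires some care with the approximation/localization but is otherwise standard once the coefficients are verified to fit the framework. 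A secondary technical point is checking that the frozen coefficients inherit the required measurability and the progressive measurability of $X^\mu$ so that all stochastic integrals are well defined; and ensuring that the constants in the a priori and stability estimates genuinely do not depend on $\mu$ (which they do not, because the $\mu$-contributions enter only through the finite quantity $\int_0^T\mu_t(\|\cdot\|_H^2)\,dt$ in the bound and through the Lipschitz/monotonicity constant $c$ in the stability estimate). Everything else — the Gronwall arguments, the BDG estimates, the patching over $[0,T]$ — is routine.
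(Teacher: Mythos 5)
Your proposal is correct and follows essentially the same route as the paper: freeze the measure flow $\mu\in C([0,T];\mathcal{P}_2(H))$, invoke the classical variational well-posedness theory for jump SPDEs (the paper cites \cite{BLZ}) for the frozen equation, define the map $\mu\mapsto \mathcal{L}_{X^\mu(\cdot)}$ on measure flows, and obtain a unique fixed point via an It\^{o}-formula stability estimate using $(H3)$ and $(H5)$, a contraction on a sufficiently small time interval (the paper phrases this through an exponentially weighted Wasserstein metric, which is equivalent to your small-$\tau$/iteration argument), and patching over $[0,T]$ in finitely many steps, with uniqueness of the McKean--Vlasov solution reduced to uniqueness of the fixed point exactly as you describe.
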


The proof of Theorem \ref{THY} is given in Section \ref{PT2.1}.

\begin{Rem}
To the best of our knowledge, this is the first result concerning the well-posedness for McKean-Vlasov quasi-linear SPDEs with L\'{e}vy noise, which extends the classical well-posedness results in the variational framework (cf. \cite{G82}) to the case of distribution dependent SPDEs, some examples will be given in Section \ref{App} below.
 \end{Rem}

The second purpose of this paper is to establish a LDP for McKean-Vlasov SPDE (\ref{YE01}) with small L\'{e}vy noise, i.e., considering the following equation
\begin{equation}\label{YE}
\left\{ \begin{aligned}	&dX^\varepsilon(t)=A(t,X^\varepsilon(t),\mathcal{L}_{X^\varepsilon(t)})dt
+ \varepsilon \int_Z  f(t,X^\varepsilon(t-),\mathcal{L}_{X^\varepsilon(t)},z)\widetilde{N}^{\varepsilon ^{-1}}(dt,dz),\\
	&X^\varepsilon(0)=x\in H,
\end{aligned} \right.
\end{equation}
where $\varepsilon >0$ is a small parameter, we are interested in the asymptotic behavior of
$X^\varepsilon$ on $D([0,T];H)$ as $\varepsilon \to 0.$

Before the statement of the main result, we introduce the definition of LDP. Let $\{X^\varepsilon\}_{\varepsilon > 0}$ denote a family of random variables defined on a probability space $(\Omega, \mathcal{F},\mathbb{P})$ and taking values in a Polish space $E.$ The theory of large deviations is concerned with events $\mathcal{A} \in \mathcal{B}(E)$ for which probability $\mathbb{P} (X^\varepsilon\in \mathcal{A})$ converges to zero exponentially fast as $\varepsilon \to 0.$ The exponential decay rate such probabilities is typically expressed in terms of a ``rate function" $I$ defined bellow.

\begin{definition}
A function $I:E\rightarrow[0,+\infty]$ is called a rate function on $E,$ if $I$ is lower semi-continuous. Moreover, a rate function $I$
is called a {\it good rate function} if  the level set $\{x\in E: I(x)\le
K\}$ is compact for each constant $K<\infty.$
\end{definition}

\begin{definition} The random variable family
 $\{X^\varepsilon\}$ is said to satisfy
 the LDP on $E$ with rate function
 $I$ if  the following lower and upper bound conditions hold.

(i) (Lower bound) for any open set $G\subset E,$ then
$$\liminf_{\varepsilon\to 0}
   \varepsilon \log \mathbb{P}(X^{\varepsilon}\in G)\geq -\inf_{x\in G}I(x),$$

(ii) (Upper bound) for any closed set $F\subset E,$ then
$$ \limsup_{\varepsilon\to 0}
   \varepsilon \log \mathbb{P}(X^{\varepsilon}\in F)\leq
  -\inf_{x\in F} I(x).
$$
\end{definition}

Before giving the LDP result, we need to introduce the skeleton equation, which is used to define the rate function. To do this, some more basic notations should be introduced. Define $l:[0,\infty)\to [0,\infty)$ by
\begin{equation}\label{e01}
l(r)=r\log r-r+1,r\in [0,\infty).
\end{equation}
For any $\varphi\in \mathcal{A}$,
\begin{equation}\label{e02}
Q(\varphi)=\int_{Z_T}l(\varphi(t,z))\theta_T(dzdt)
\end{equation}
is well defined as a $[0,\infty]$-valued random variable.

Let $N \in \mathbb{N},$ define
\begin{equation}\label{e03}
S=\bigcup_{N=1}^\infty S^N,~~\text{where}~~S^N= \Big\{g:Z_T \to [0,\infty):Q(g)\leq N \Big\} .
\end{equation}
A measure $\theta^g_T \in \mathcal{M}_{FC}(Z_T)$ can identify a function $g \in S^N,$ defined by
\begin{equation}\label{e04}
\theta^g_T (A)= \int _A g(s,z)\theta_T(dzds),A \in \mathcal{B}(Z_T).
\end{equation}
This identification introduces a topology on $S^N$ under which $S^N$ is a compact space (\cite[Appendix]{BCD}). Throughout the paper, we use this topology on $S^N.$

For any $g \in S,$ $t \in [0,T]$, we consider the following skeleton equation
\begin{align}\label{e05}
X^g(t)=&x+\int_0^tA(s,X^g(s),\mathcal{L}_{X^0(s)})ds\nonumber\\
&+\int_0^t\int_{Z}f(s,X^g(s),\mathcal{L}_{X^0(s)},z)(g(s,z)-1)\theta(dz)ds,
\end{align}
where $X^0$ is the solution of the following equation
\begin{equation}\label{e306}
\frac{d X^0(t)}{d t} = A (t, X^0(t),\mathcal{L}_{ X^0(t)}), \quad X^0(0) = x.
\end{equation}
The reasons for considering such a skeleton equation will be given in the next subsection.

The second main result of this paper concerning the LDP for equation (\ref{YE}) is as follows.

\begin{theorem}\label{LDP}
 Suppose that $(H1)$-$(H5)$ hold. Then the solution of (\ref{YE}), i.e., $X^\varepsilon$ satisfies a LDP on $D([0,T];H)$ with the rate function
$$I(\phi)=\inf\Big\{ Q(g):\phi=X^g,g \in S\Big\},\quad\phi \in D([0,T];H),$$
where $X^g$ is the solution to the skeleton equation (\ref{e05}), $Q$ is defined in (\ref{e02}), and $\inf \varnothing = \infty $.
\end{theorem}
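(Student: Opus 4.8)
The plan is to apply the weak convergence criteria for functionals of Poisson random measures, in the McKean--Vlasov form developed in \cite{LSZZ} (building on \cite{BCD,BDM11}). Writing $X^\varepsilon=\mathcal G^\varepsilon(\varepsilon N^{\varepsilon^{-1}})$ and $X^g=\mathcal G^0(g)$ for the skeleton map, it suffices to verify two conditions: \emph{(a) Compactness:} for every $N<\infty$ the set $K_N:=\{X^g:g\in S^N\}$ is a compact subset of $D([0,T];H)$; \emph{(b) Weak convergence:} whenever $\varphi^\varepsilon\in\mathcal U^N$ (predictable controls with $Q(\varphi^\varepsilon)\le N$ a.s.) converge in distribution as $S^N$-valued random variables to some $\varphi$, then $Y^\varepsilon:=\mathcal G^\varepsilon(\varepsilon N^{\varepsilon^{-1}\varphi^\varepsilon})$ converges in distribution in $D([0,T];H)$ to $X^\varphi$. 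A preliminary step needed for both is the well-posedness of the skeleton equation \eref{e05}: since $\mathcal L_{X^0(s)}=\delta_{X^0(s)}$ is \emph{deterministic} and obtained first by solving the monotone evolution equation \eref{e306} (a special case of the deterministic variational theory, cf.~\cite{G82}), equation \eref{e05} is a distribution-\emph{independent} monotone equation with controlled drift perturbation $\int_Z f(s,\cdot,\delta_{X^0(s)},z)(g(s,z)-1)\theta(dz)$, whose solvability follows from $(H1)$--$(H5)$ together with the elementary bound $ab\le e^{\sigma a}+\sigma^{-1}l(b)$ which, via $l_1,l_2\in\mathcal H_2$ and $Q(g)\le N$, turns the perturbation into a controllable lower-order term. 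Testing with $X^g$ and using $(H2)$, $(H5)(ii)$ and this inequality gives
$$\sup_{g\in S^N}\Big(\sup_{t\in[0,T]}\|X^g(t)\|_H^2+\int_0^T\|X^g(t)\|_V^\alpha\,dt\Big)\le C_N<\infty,$$
and then $(H4)$ bounds $A(\cdot,X^g,\delta_{X^0})$ uniformly in $L^{\alpha/(\alpha-1)}([0,T];V^*)$; the analogous bounds, uniform in $\varepsilon\in(0,1]$ and over $\varphi^\varepsilon\in\mathcal U^N$, are derived for $Y^\varepsilon$ by the same testing combined with Burkholder--Davis--Gundy and the controlled-Poisson estimates of \cite{BCD}.

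\textbf{Condition (a).}
Let $g_n\to g$ in $S^N$. Using the a priori bounds, pass to a subsequence so that $X^{g_n}\rightharpoonup\bar X$ in $L^\alpha([0,T];V)$, $X^{g_n}\overset{*}{\rightharpoonup}\bar X$ in $L^\infty([0,T];H)$, and $A(\cdot,X^{g_n},\delta_{X^0})\rightharpoonup\bar A$ in $L^{\alpha/(\alpha-1)}([0,T];V^*)$. Because no compact embedding $V\hookrightarrow H$ is assumed, one cannot invoke an Aubin--Lions argument; instead, following the time-discretization and relative-entropy strategy of \cite{WZ,WZZ24}, one shows that $X^{g_n}$ stays uniformly close in $C([0,T];H)$ (uniformly in $n$) to its piecewise-constant time interpolation, which upgrades the weak convergence to $X^{g_n}(t)\to\bar X(t)$ in $H$ for each $t$. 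Simultaneously, $g_n\to g$ in the $S^N$-topology, together with $l_1,l_2\in\mathcal H_2\cap L_2(\theta_T)$ and the continuity in $(H5)(i)$, yields convergence of the perturbation integrals. The monotonicity trick based on $(H1)$ and $(H3)$ then identifies $\bar A=A(\cdot,\bar X,\delta_{X^0})$ and $\bar X=X^g$; uniqueness of the limit gives convergence of the full sequence, hence compactness of $K_N$.

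\textbf{Condition (b).}
Since the noise carries a factor $\varepsilon$, a Gronwall argument using $(H3)$, $(H5)$ and the controlled-Poisson $L^2$-estimate first gives $\sup_{t\in[0,T]}\mathbb E\|X^\varepsilon(t)-X^0(t)\|_H^2\to0$, hence $\mathcal L_{X^\varepsilon(t)}\to\delta_{X^0(t)}$ in $C([0,T];\mathcal P_2(H))$; this is exactly why $\mathcal L_{X^0(s)}$ (and not $\mathcal L_{Y^\varepsilon(s)}$) appears in \eref{e05}, and it decouples the measure argument from the rare-event analysis. Writing via Girsanov the controlled equation for $Y^\varepsilon$ with coefficients still evaluated at $\mathcal L_{X^\varepsilon(s)}$, one shows the small-noise term $\varepsilon\int_0^\cdot\int_Z f(s,Y^\varepsilon(s-),\mathcal L_{X^\varepsilon(s)},z)\widetilde N^{\varepsilon^{-1}\varphi^\varepsilon}(ds,dz)\to0$ in $L^2$ (using $l_2\in\mathcal H_2$ and $Q(\varphi^\varepsilon)\le N$), and, using the a priori estimates and again the time-discretization argument of \cite{WZ} to compensate for the missing compactness, that $Y^\varepsilon$ is tight in $D([0,T];H)$. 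Passing to the limit along the joint law of $(Y^\varepsilon,\varphi^\varepsilon)$ (via Skorokhod representation), the monotonicity trick with $(H1)$, $(H3)$ together with the convergences $\mathcal L_{X^\varepsilon(s)}\to\delta_{X^0(s)}$ and $\varphi^\varepsilon\to\varphi$ identifies every limit point as the unique solution $X^\varphi$ of \eref{e05}, which gives the required convergence in distribution; the two conditions then yield the LDP with the stated rate function.

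\textbf{Main obstacle.}
I expect the principal difficulty to be twofold and present in both conditions: first, replacing the usual Aubin--Lions compactness by the time-discretization plus relative-entropy estimates of \cite{WZ,WZZ24} so as to cover unbounded domains, where the discretization error must be controlled uniformly over $S^N$ (respectively over $\varphi^\varepsilon\in\mathcal U^N$ and $\varepsilon$); and second, propagating the distribution dependence so that the Wasserstein terms in $(H3)$ and $(H5)$ are absorbed, which hinges on the preliminary law-convergence $\mathcal L_{X^\varepsilon(t)}\to\delta_{X^0(t)}$ and on a careful bookkeeping of the coupled a priori estimates in $V$, $H$ and $V^*$ simultaneously.
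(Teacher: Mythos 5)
Your overall scaffolding (the representation $X^\varepsilon=\mathcal G^\varepsilon(\varepsilon N^{\varepsilon^{-1}})$ with the measure flow frozen at $\mathcal L_{X^\varepsilon}$, the skeleton \eref{e05} with the deterministic law $\delta_{X^0(s)}$, the uniform estimates over $S^N$, and the preliminary convergence $\mathcal L_{X^\varepsilon(t)}\to\delta_{X^0(t)}$) agrees with the paper, but for the stochastic half of the criterion you revert to the classical Budhiraja--Chen--Dupuis formulation: tightness of the controlled processes in $D([0,T];H)$, Skorokhod representation, and a Minty-type identification of the limit as $X^\varphi$. The paper instead verifies the \emph{modified} criterion of \cite[Theorem 4.1]{LSZZ}: one only has to show $\rho(X^{\varphi_\varepsilon},Y^{\varphi_\varepsilon})\to 0$ in probability, where $X^{\varphi_\varepsilon}=\mathcal G^\varepsilon(\varepsilon N^{\varepsilon^{-1}\varphi_\varepsilon})$ and $Y^{\varphi_\varepsilon}=\mathcal G^0(\varphi_\varepsilon)$ are driven by the \emph{same} control, and this is done by a direct It\^o/Gronwall comparison (Step 2 of the paper, using $(H3)$, $(H5)$, BDG and Lemmas \ref{l4.2}, \ref{SEE}, \ref{L3}), which requires nothing beyond moment estimates. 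Your route breaks down exactly at the point this device is designed to bypass: without a compact embedding $V\subset H$ there is no available argument giving tightness of $Y^\varepsilon$ in $D([0,T];H)$, and the time-discretization/relative-entropy machinery of \cite{WZ,WZZ24} that you invoke is not a tightness tool -- in the paper it is used only inside the \emph{deterministic} continuity proof of $\mathcal G^0$ on $S^N$, to pass to the limit in the single term $\tilde I_4$ via the entropy bound \eref{e544} and \cite[Lemma 2.8]{BD}. As written, your condition (b) therefore has a genuine gap; the identification-by-monotonicity step also presupposes an energy (lim sup) inequality for limits obtained only in distribution, which you do not address.

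There is a second gap in your condition (a). You claim that uniform closeness of $X^{g_n}$ to its piecewise-constant time interpolation ``upgrades'' the weak convergences to strong convergence $X^{g_n}(t)\to\bar X(t)$ in $H$. Closeness of a sequence to its own interpolants is a temporal equicontinuity statement; it carries no spatial compactness and cannot convert weak into strong $H$-convergence, which is again precisely what is unavailable without a compact embedding. The paper never needs strong convergence: it applies the chain rule and Gronwall directly to $\|X^{g_n}(t)-X^g(t)\|_H^2$ using $(H3)$ and $(H5)(i)$, reduces the problem to $\sup_{t}|Q_{n,1}(t)|$, and makes that term small by splitting off $K_\varepsilon^c$ (Lemma \ref{L1}), truncating $l_2$ (Lemma \ref{L2}), time-discretizing, and then passing to the limit in $\tilde I_4$ using only weak sequential compactness of bounded sets in $H$ together with the relative-entropy estimate and \cite[Lemma 2.8]{BD} for $g_n\to g$ in the $S^N$-topology. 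To make your proposal work you would either have to supply the missing tightness/strong-compactness arguments (which the standing hypotheses do not support) or restructure both steps along the lines just described.
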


The proof of Theorem \ref{LDP} will be given in Section \ref{PT2.2}.
\begin{Rem}
 Theorem \ref{LDP} is the first result concerning the LDP for McKean-Vlasov quasi-linear SPDEs with L\'{e}vy noise, some examples will be given in Section \ref{App} below.
 \end{Rem}

\section{Applications and examples}\label{App}

The main results formulated in Theorems \ref{THY} and \ref{LDP} can be used to deal with a class of Mckean-Vlasov SPDE models driven by L\'{e}vy noise. Here, we illustrate our main results by the stochastic porous media equation and stochastic $p$-Lapace equation.

As preparations, we first recall some definitions and notations which are commonly used in the literature. Let $\Lambda \subset \mathbb{R}^d$ is a bounded domain with smooth boundary $\partial \Lambda$, $C_0^\infty (\Lambda, \mathbb{R}^d)$ be the space of all smooth function from $\Lambda$ to $\mathbb{R}^d$ with compact support. For any $r \geq 1,$ let $L^r (\Lambda, \mathbb{R}^d)$ be the vector valued $L^r$-space with the norm $\|\cdot\|_{L^r}.$ For any integer $m>0,$ we denote the classical Sobolev space by $W_0^{m,r} (\Lambda, \mathbb{R}^d)$, which satisfies Dirichlet boundary condition and is equipped with the following norm
$$
\|u\|_{W^{m,r}} = \Bigg(\sum_{|\alpha |=m}{\int_{\Lambda}{|D^{\alpha}u|^r}}dx \Bigg)^{\frac{1}{r}} .
$$

\subsection{McKean-Vlasov stochastic porous media equation}

We consider the following Mckean-Vlasov porous media equation
\begin{equation}\label{e601}
\left\{ \begin{array}{l}
	dX^{\varepsilon}( t ) =\Delta \Psi (t, X^{\varepsilon}( t ) ,\mathcal{L}_{X^{\varepsilon}( t )} ) +\varepsilon \int_Z{f(t, X^{\varepsilon}( t- ) ,\mathcal{L}_{X^{\varepsilon}( t )},z )}\widetilde{N}^{\varepsilon ^{-1}}( dt,dz ) , \\
	X^{\varepsilon}( 0 ) =x ,
\end{array} \right.
\end{equation}
where $\Delta$ denotes the Laplace operator, and $\Psi,$ $f$ satisfy some assumptions below.

For any $r \geq 2,$ we set the following Gelfand triple for (\ref{e601})
$$
V:= L^r (\Lambda) \subset H := (W_0^{1,2}(\Lambda))^* \subset V^*.
$$

We recall the following useful lemma (refer Lemma 4.1.13 in \cite{LR}).

\begin{lemma}\label{l601}
The map
$$\Delta: W_0^{1,2} (\Lambda) \to (L^r (\Lambda))^* $$
could be extend to a linear isometry
$$\Delta: L^{\frac{r}{r-1}}(\Lambda) \to (L^r (\Lambda))^*.$$
Furthermore, for any $u \in L^{\frac{r}{r-1}}(\Lambda),$ $v \in L^r (\Lambda)$ we have
$$
_{V^*} \langle -\Delta u, v \rangle_V = _{L^{\frac{r}{r-1}}} \langle u, v \rangle _{L^r} = \int _\Lambda u(\xi)v(\xi)d \xi.
$$
\end{lemma}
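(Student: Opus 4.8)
The plan is to extend the Dirichlet Laplacian from $W_0^{1,2}(\Lambda)$, where it is the standard weak Laplacian, to the larger space $L^{\frac{r}{r-1}}(\Lambda)$ by a density argument. First I would recall that, by definition of the Gelfand triple $W_0^{1,2}(\Lambda)\subset L^2(\Lambda)\subset (W_0^{1,2}(\Lambda))^*$, the operator $-\Delta:W_0^{1,2}(\Lambda)\to (W_0^{1,2}(\Lambda))^*$ is an isometric isomorphism (this is exactly the Riesz isomorphism associated with the gradient inner product $\langle u,v\rangle\mapsto\int_\Lambda\nabla u\cdot\nabla v\,d\xi$, which is equivalent to the $W_0^{1,2}$-norm by the Poincar\'e inequality). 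Since here $H=(W_0^{1,2}(\Lambda))^*$, this says precisely that $-\Delta:W_0^{1,2}(\Lambda)\to H$ is a linear isometry. Then I would use that $L^{\frac{r}{r-1}}(\Lambda)\hookrightarrow H$ continuously and densely: for $r\ge 2$ we have $\frac{r}{r-1}\le 2$, so $L^{\frac{r}{r-1}}\supset L^2$ fails, but the relevant embedding is $L^{\frac{r}{r-1}}(\Lambda)\hookrightarrow (W_0^{1,r}(\Lambda))^*\subset (W_0^{1,2}(\Lambda))^*=H$; on bounded $\Lambda$ one has $W_0^{1,2}\hookrightarrow L^{r}$ by Sobolev embedding when $r$ is below the critical exponent and by the dual of the $L^r$–$W_0^{1,2}$ pairing otherwise, and taking duals gives the claimed continuous dense embedding $L^{\frac{r}{r-1}}\hookrightarrow H$.

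Next I would compute the norm. For $u\in W_0^{1,2}(\Lambda)$ and any $v\in W_0^{1,2}(\Lambda)$, integration by parts gives $_{H}\langle -\Delta u,v\rangle_{W_0^{1,2}}=\int_\Lambda\nabla u\cdot\nabla v\,d\xi$. To identify the $(L^r)^*$-norm of $-\Delta u$ for $u\in L^{\frac{r}{r-1}}(\Lambda)$, I would test against $v\in L^r(\Lambda)$ and show $_{V^*}\langle -\Delta u,v\rangle_V=\int_\Lambda u(\xi)v(\xi)\,d\xi$: this is the $L^{\frac{r}{r-1}}$–$L^r$ duality pairing, and by H\"older its operator norm over $\|v\|_{L^r}\le 1$ is exactly $\|u\|_{L^{\frac{r}{r-1}}}$ (the supremum is attained, up to approximation, at $v=|u|^{\frac{1}{r-1}}\mathrm{sgn}\,u$, suitably normalized). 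Hence $\|-\Delta u\|_{(L^r)^*}=\|u\|_{L^{\frac{r}{r-1}}}$, i.e. $-\Delta$ is an isometry on the dense subset $W_0^{1,2}(\Lambda)\cap L^{\frac{r}{r-1}}(\Lambda)$ of $L^{\frac{r}{r-1}}(\Lambda)$, and therefore extends uniquely to a linear isometry $-\Delta:L^{\frac{r}{r-1}}(\Lambda)\to (L^r(\Lambda))^*$. Passing to the limit in the identity $_{V^*}\langle -\Delta u,v\rangle_V=\int_\Lambda uv\,d\xi$ along an approximating sequence $u_n\to u$ in $L^{\frac{r}{r-1}}$, with $v\in L^r$ fixed, yields the stated formula for all $u\in L^{\frac{r}{r-1}}(\Lambda)$.

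The main obstacle is the correct bookkeeping of the three pairings — the Gelfand pairing $_{H}\langle\cdot,\cdot\rangle_{W_0^{1,2}}$, the Gelfand pairing $_{V^*}\langle\cdot,\cdot\rangle_V$ for $V=L^r(\Lambda)$, and the $L^{\frac{r}{r-1}}$–$L^r$ duality — and checking that they are mutually consistent on the overlaps (so that the "same" operator $-\Delta$ is being extended, not two different ones). Concretely, one must verify that the identification of $H=(W_0^{1,2})^*$ with a space into which $L^{\frac{r}{r-1}}$ embeds is compatible with viewing $(L^r)^*$ as a superspace of $H$, which is where the density of $W_0^{1,2}(\Lambda)\cap L^{\frac{r}{r-1}}(\Lambda)$ in $L^{\frac{r}{r-1}}(\Lambda)$ (and of $L^r(\Lambda)$ in $H$) does the real work. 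Since this is exactly Lemma 4.1.13 in \cite{LR}, I would keep the argument brief and refer there for the routine density and embedding verifications.
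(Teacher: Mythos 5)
Your outline follows the standard argument for this fact, which is the right thing to do: the paper itself gives no proof at all, it simply recalls the statement from Liu--R\"ockner \cite{LR} (Lemma 4.1.13), and the proof there is exactly the scheme you describe --- prove ${}_{V^*}\langle -\Delta u, v\rangle_V=\int_\Lambda u v\,d\xi$ for $u\in W_0^{1,2}(\Lambda)$ and $v\in L^r(\Lambda)$, read off $\|\Delta u\|_{V^*}=\|u\|_{L^{r/(r-1)}}$ from the $L^{r/(r-1)}$--$L^r$ duality and H\"older, and then extend by density of $W_0^{1,2}(\Lambda)$ in $L^{r/(r-1)}(\Lambda)$ and pass to the limit in the identity. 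Your H\"older/duality computation of the norm and the density-extension step are correct.

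Two points in your write-up need repair. First, the embedding bookkeeping is wrong: for $r\ge 2$ one has $W_0^{1,r}(\Lambda)\subset W_0^{1,2}(\Lambda)$, hence $(W_0^{1,2}(\Lambda))^*\subset (W_0^{1,r}(\Lambda))^*$, so your chain $L^{r/(r-1)}\hookrightarrow (W_0^{1,r})^*\subset (W_0^{1,2})^*=H$ goes the wrong way; in fact $L^{r/(r-1)}(\Lambda)\hookrightarrow H$ holds only under a Sobolev restriction on $r$ and $d$ (one needs $W_0^{1,2}\hookrightarrow L^r$) and, more importantly, it is not needed for the lemma. What is needed is only the Gelfand triple itself: $V=L^r(\Lambda)\hookrightarrow H=(W_0^{1,2}(\Lambda))^*$ continuously and densely, which follows from $W_0^{1,2}(\Lambda)\subset L^2(\Lambda)\subset L^{r/(r-1)}(\Lambda)$ on the bounded domain $\Lambda$ (no critical exponent enters, and your remark that ``$L^{r/(r-1)}\supset L^2$ fails'' is also off --- it holds since $r/(r-1)\le 2$); the extension of $\Delta$ then takes values in $V^*$, not in $H$. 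Second, the identity ${}_{V^*}\langle -\Delta u, v\rangle_V=\int_\Lambda uv\,d\xi$ for $u\in W_0^{1,2}(\Lambda)$, $v\in L^r(\Lambda)$, which you label the main obstacle and defer entirely to \cite{LR}, is the heart of the proof and should be carried out; it is a short computation from (\ref{e203}) and the dual inner product on $H$: since $\Delta u\in H$, one has ${}_{V^*}\langle -\Delta u, v\rangle_V=\langle -\Delta u, v\rangle_H=\langle u,(-\Delta)^{-1}v\rangle_{W_0^{1,2}}={}_{(W_0^{1,2})^*}\langle v,u\rangle_{W_0^{1,2}}=\int_\Lambda uv\,d\xi$, where $v\in L^r\subset H$ acts on $W_0^{1,2}(\Lambda)$ by integration (this is precisely how the embedding $V\hookrightarrow H$ is defined). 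With these corrections your proposal is a complete proof.
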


Firstly, we suppose the measurable map $\Psi: V \times \mathcal{P}_2(H) \to  L^{\frac{r}{r-1}}(\Lambda)$ satisfies the following hypotheses.
\begin{hypothesis}\label{h3}For all $u,v \in V$ and $\mu, \nu \in \mathcal{P}_2(H)$
\begin{enumerate}
\item [$(\Psi 1)$] The map
$$
V \times \mathcal{P}_2(H) \ni (u,\mu) \mapsto \int_\Lambda \Psi(u,\mu)(\xi) v (\xi)d \xi
$$
is continuous.

\item [$(\Psi 2)$] There are some constants $c$, $\delta > 0$ such that
$$
\int_\Lambda \Psi(t,u,\mu)(\xi) v (\xi)d \xi \geq -c(\|u\|_H ^2+ \mu (\|\cdot\|_H^2)+1)+\delta\|u\|_V^r.
$$

\item [$(\Psi 3)$]
$$
\int_\Lambda (\Psi(t,u,\mu)(\xi)-\Psi(t,v,\nu)(\xi))(u(\xi)-v(\xi))d \xi \geq 0.
$$

\item [$(\Psi 4)$]
There is a constant $c>0$
$$
\|\Psi(t,u,\mu)\|^{\frac{r}{r-1}}_{L^{\frac{r}{r-1}}} \leq c(\|u\|_V^r + \mu (\|\cdot\|_H^2)+1).
$$
\end{enumerate}
\end{hypothesis}

After the preparations above, we now define map $A: V \times \mathcal{P}_2(H) \to V^*$ by
$$
A(t,u,\mu):= \Delta \Psi (t,u, \mu).
$$
The Lemma \ref{l601} ensures that the map $A$ is well-defined and takes value in $V^*$. Moreover, it is easy to check that the conditions $(\Psi 1)$-$(\Psi 4)$ imply $(H 1)$-$(H 4)$ hold with $\alpha=r$. In order to prove the main result, we further assume that there exists $l_1,l_2 \in \mathcal{P}_2 \cap L_2(\theta_T)$ such that for all $(t,z) \in [0,T] \times Z,u,v \in H,$ $f$ satisfies $(H5).$

According to Theorems \ref{THY} and \ref{LDP}, we have the following result for the distribution dependent stochastic porous media equation.

\begin{theorem}
Suppose that $\Psi$ satisfies the conditions $(\Psi 1)$-$(\Psi 4)$ above and assumption $(H5)$ holds. Then for any initial value $x \in H$ and $T>0$, Eq.~(\ref{e601}) has a unique solution $X^\varepsilon(t),t \in [0,T]$, and $X^\varepsilon(t)$ satisfies the LDP on $D([0,T];H)$ with the rate function
$$I(\phi)=\inf\{ Q(g):\phi=X^g,g \in S\},\quad \phi \in D([0,T];H),$$
where $X^g$ is the solution of the corresponding skeleton equation.
\end{theorem}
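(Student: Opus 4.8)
The plan is to recognise Eq.~(\ref{e601}) as an instance of the abstract equation (\ref{YE}) with $A(t,u,\mu):=\Delta\Psi(t,u,\mu)$ and then to quote Theorems \ref{THY} and \ref{LDP}; thus the whole task reduces to checking that this choice of $A$ (together with the $f$ already assumed to satisfy $(H5)$) fulfils Hypothesis \ref{h1}. Before doing so I would confirm that $V=L^r(\Lambda)\subset H=(W^{1,2}_0(\Lambda))^*\subset V^*$ really is a Gelfand triple in the sense of Subsection \ref{SN}: since $\Lambda$ is a bounded domain, $L^r(\Lambda)\hookrightarrow L^2(\Lambda)\hookrightarrow (W^{1,2}_0(\Lambda))^*$ continuously and densely (identifying $L^2(\Lambda)$ with its own dual and using that $W^{1,2}_0(\Lambda)\hookrightarrow L^2(\Lambda)$ densely), and the pairing ${}_{V^*}\langle\cdot,\cdot\rangle_V$ extends $\langle\cdot,\cdot\rangle_H$. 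By Lemma \ref{l601}, $\Delta$ is a linear isometry from $L^{r/(r-1)}(\Lambda)$ into $V^*$, so $A$ is a well-defined $V^*$-valued map with, for every $v\in V$,
\begin{equation*}
{}_{V^*}\langle A(t,u,\mu),v\rangle_V=-\int_\Lambda\Psi(t,u,\mu)(\xi)\,v(\xi)\,d\xi,\qquad\|A(t,u,\mu)\|_{V^*}=\|\Psi(t,u,\mu)\|_{L^{r/(r-1)}}.
\end{equation*}

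Next I would translate $(\Psi1)$--$(\Psi4)$ into $(H1)$--$(H4)$ with $\alpha=r>1$ through these two identities. Hemicontinuity $(H1)$ for fixed $t$ is precisely $(\Psi1)$ read through the first identity. Taking $v=u$ and using $(\Psi2)$ gives $2\,{}_{V^*}\langle A(t,u,\mu),u\rangle_V=-2\int_\Lambda\Psi(t,u,\mu)u\,d\xi\le 2c(\|u\|_H^2+\mu(\|\cdot\|_H^2)+1)-2\delta\|u\|_V^r$, i.e.\ $(H2)$. Applying the first identity to $\Psi(t,u,\mu)-\Psi(t,v,\nu)$ paired with $u-v$ and invoking $(\Psi3)$ yields $2\,{}_{V^*}\langle A(t,u,\mu)-A(t,v,\nu),u-v\rangle_V=-2\int_\Lambda(\Psi(t,u,\mu)-\Psi(t,v,\nu))(u-v)\,d\xi\le 0$, which gives $(H3)$ (with $c=0$). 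The isometry identity with $(\Psi4)$ gives $\|A(t,u,\mu)\|_{V^*}^{r/(r-1)}=\|\Psi(t,u,\mu)\|_{L^{r/(r-1)}}^{r/(r-1)}\le c(\|u\|_V^r+\mu(\|\cdot\|_H^2)+1)$, which is $(H4)$ since $\alpha/(\alpha-1)=r/(r-1)$.

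With $(H1)$--$(H5)$ all in force, I would simply invoke the abstract results: Theorem \ref{THY} (whose proof applies verbatim to (\ref{YE}) for each fixed $\varepsilon>0$, the factor $\varepsilon$ in front of the noise being harmless) gives, for every $x\in H$ and $T>0$, a unique strong solution $X^\varepsilon$ of (\ref{e601}) with $X^\varepsilon\in L^r([0,T]\times\Omega;V)\cap L^2(\Omega;L^\infty([0,T];H))$; and Theorem \ref{LDP} gives the large deviation principle for $X^\varepsilon$ on $D([0,T];H)$ with the asserted rate function, the associated skeleton equation being (\ref{e05}) with $A(s,\cdot,\cdot)=\Delta\Psi(s,\cdot,\cdot)$ and $X^0$ the solution of the corresponding deterministic equation.

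As for the main obstacle: there is essentially none of any depth, because all the hard analysis — the fixed-point/monotonicity argument behind well-posedness, and the time-discretization together with the relative-entropy estimates behind the weak-convergence proof of the LDP — is already packaged inside Theorems \ref{THY} and \ref{LDP}. The only two spots where care is genuinely needed are (i) the sign bookkeeping in Lemma \ref{l601} when passing from $\Psi$ to $A=\Delta\Psi$, so that the coercivity $(\Psi2)$ correctly produces the $-\delta\|u\|_V^r$ term of $(H2)$ rather than a term of the wrong sign, and (ii) checking, before the abstract machinery can be switched on, that $L^r(\Lambda)$ embeds continuously and densely into $H=(W^{1,2}_0(\Lambda))^*$ — which is where boundedness of $\Lambda$ is used and where the choice $H\neq L^2(\Lambda)$ actually matters.
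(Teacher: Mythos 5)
Your proposal is correct and follows essentially the same route as the paper: define $A(t,u,\mu):=\Delta\Psi(t,u,\mu)$, use Lemma \ref{l601} to see that $A$ maps into $V^*$ with $\|A(t,u,\mu)\|_{V^*}=\|\Psi(t,u,\mu)\|_{L^{r/(r-1)}}$, check that $(\Psi1)$--$(\Psi4)$ yield $(H1)$--$(H4)$ with $\alpha=r$ (the paper leaves this as an easy verification), and then apply Theorems \ref{THY} and \ref{LDP}. Your explicit attention to the sign in Lemma \ref{l601} and to the dense continuous embedding $L^r(\Lambda)\subset(W_0^{1,2}(\Lambda))^*$ only spells out details the paper takes for granted.
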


\begin{Rem}
To the best of our knowledge, there is no result in the literature concerning the well-posedness and LDP
for McKean-Vlasov stochastic porous media equation driven by L\'{e}vy type noise. Recently, Hong et al. \cite{HHL,HLL21} have studied McKean-Vlasov stochastic porous media equation driven by Gaussian noise.
\end{Rem}

\subsection{McKean-Vlasov stochastic $p$-Lapace equation}
Now we apply our main result to the following Mckean-Vlasov stochastic $p$-Lapace equation
\begin{equation}\label{e606}
\left\{ \begin{array}{l}
	dX^{\varepsilon}( t ) =div( |\nabla X^{\varepsilon}( t ) |^{p-2}\nabla X^{\varepsilon}( t ) ) dt
+K(t,X^{\varepsilon}( t ) ,\mathcal{L}_{X^{\varepsilon}( t )})dt \\
~~~~~~~~~~~~+\varepsilon \int_Z{f(t, X^{\varepsilon}( t- ) ,\mathcal{L}_{X^{\varepsilon}( t )},z )}\widetilde{N}^{\varepsilon ^{-1}}( dt,dz ), \\
	X^{\varepsilon}( 0 ) =x .\\
\end{array} \right.
\end{equation}
For any $p \geq 2,$ we set the following Gelfand triple for (\ref{e606})
$$
V:= W_0^{1,p}(\Lambda) \subset H:= L^2(\Lambda) \subset V^*.
$$

Let $\bar{A}(u) := div(|\nabla u|^{p-2}\nabla u),$ which is called $p$-$Laplacian$ operator. It is well known that the operator $\bar{A}$ satisfies $(H1)$-$(H4)$ with $\alpha=p$ (cf. \cite{LR}).
Suppose that for all $t\in[0,T]$, $u,v\in V$ and $\mu,\nu\in\mathcal{P}_2(H)$, there is a constant $C>0$ such that the map $K:[0,T]\times V\times\mathcal{P}_2(H)\to H$ satisfies
\begin{eqnarray}\label{31}
\|K(t,u,\mu)-K(t,v,\nu)\|_H\leq C\big(\|u-v\|_H+\mathbb{W}_{2,H}(\mu,\nu)\big).
\end{eqnarray}

According to Theorems \ref{THY} and \ref{LDP}, we get the following result for the distribution dependent stochastic $p$-Laplace equation with L\'{e}vy noise.

\begin{theorem}
Suppose that $(H5)$ hold, then for any initial value $x \in H$ and $T>0,$ system (\ref{e606}) has a unique solution $X^\varepsilon(t),t \in [0,T]$, and $X^\varepsilon(t)$ satisfies the LDP on $D([0,T];H)$ with the rate function
$$I(\phi)=\inf\{ Q(g):\phi=X^g,g \in S\},\quad \phi \in D([0,T];H),$$
where $X^g$ is the solution of the corresponding skeleton equation.
\end{theorem}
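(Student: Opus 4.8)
The plan is to recognize Eq.~(\ref{e606}) as an instance of the abstract equation (\ref{YE01}) on the Gelfand triple $V=W_0^{1,p}(\Lambda)\subset H=L^2(\Lambda)\subset V^*$ by taking
$$A(t,u,\mu):=\bar A(u)+K(t,u,\mu),\qquad \bar A(u):=\mathrm{div}\big(|\nabla u|^{p-2}\nabla u\big),\qquad \alpha:=p,$$
and then checking that the pair $(A,f)$ verifies $(H1)$--$(H5)$; once this is done, Theorems \ref{THY} and \ref{LDP} apply directly and yield the unique solvability of (\ref{e606}) together with the stated LDP on $D([0,T];H)$ with the indicated rate function. Since $(H5)$ is imposed on $f$ by hypothesis, the whole task reduces to verifying $(H1)$--$(H4)$ for $A$, and the guiding principle is that $K$ is a lower-order perturbation: by (\ref{31}) it is controlled in the $H$-norm (hence also in $V^*$, since $H\hookrightarrow V^*$ continuously), whereas the monotonicity and coercivity of the $p$-Laplacian operate at the level of $\|\cdot\|_V$.

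First I would invoke the classical properties of $\bar A$ on this triple (cf.~\cite{LR}): it is hemicontinuous and satisfies $(H1)$--$(H4)$ with $\alpha=p$ and $\mu$-independent constants, in particular $2\,_{V^*}\langle\bar A(u),u\rangle_V\le-\delta\|u\|_V^p$, $_{V^*}\langle\bar A(u)-\bar A(v),u-v\rangle_V\le0$, and $\|\bar A(u)\|_{V^*}^{p/(p-1)}\le c(\|u\|_V^p+1)$. Next, from (\ref{31}), the triangle inequality, the identity $\mathbb{W}_{2,H}(\mu,\delta_0)^2=\mu(\|\cdot\|_H^2)$ and the boundedness of $t\mapsto\|K(t,0,\delta_0)\|_H$, one obtains the affine growth bound
$$\|K(t,u,\mu)\|_H\le C\big(1+\|u\|_H+\mu(\|\cdot\|_H^2)^{1/2}\big).$$

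With these in hand the verification is routine. For $(H1)$, $(u,\mu)\mapsto{}_{V^*}\langle A(t,u,\mu),v\rangle_V$ is continuous, being the sum of the hemicontinuous map from $\bar A$ and the map from $K$, which is Lipschitz in $u$ and in $\mathbb{W}_{2,H}$. For $(H2)$, split $_{V^*}\langle A(t,u,\mu),u\rangle_V={}_{V^*}\langle\bar A(u),u\rangle_V+\langle K(t,u,\mu),u\rangle_H$ and estimate $2\langle K(t,u,\mu),u\rangle_H\le2\|K(t,u,\mu)\|_H\|u\|_H\le C(1+\|u\|_H^2+\mu(\|\cdot\|_H^2))$ via the growth bound and Young's inequality, which lands in the form $c(\|u\|_H^2+\mu(\|\cdot\|_H^2)+1)-\delta\|u\|_V^p$. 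For $(H3)$, the $\bar A$-difference is $\le0$ by monotonicity, while $2\langle K(t,u,\mu)-K(t,v,\nu),u-v\rangle_H\le2C(\|u-v\|_H+\mathbb{W}_{2,H}(\mu,\nu))\|u-v\|_H\le c\|u-v\|_H^2+c\,\mathbb{W}_{2,H}(\mu,\nu)^2$. For $(H4)$, using $(a+b+d)^{p/(p-1)}\le C(a^{p/(p-1)}+b^{p/(p-1)}+d^{p/(p-1)})$ together with $\|K(t,u,\mu)\|_{V^*}\le C\|K(t,u,\mu)\|_H$, it suffices to absorb $\|K(t,u,\mu)\|_H^{p/(p-1)}$ into $c(\|u\|_V^p+\mu(\|\cdot\|_H^2)+1)$; this works because $p\ge2$ forces $\tfrac{p}{p-1}\le2\le p$ and $\tfrac{p}{2(p-1)}\le1$, so that (with $\|u\|_H\le C\|u\|_V$) one has $\|u\|_H^{p/(p-1)}\le C(1+\|u\|_V^p)$ and $\mu(\|\cdot\|_H^2)^{p/(2(p-1))}\le1+\mu(\|\cdot\|_H^2)$.

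I do not anticipate a genuine obstacle: the statement is a direct corollary of Theorems \ref{THY} and \ref{LDP} once the abstract assumptions are checked for the pair $(A,f)$ above. The only point requiring some care is the exponent bookkeeping in $(H4)$ (and, to a lesser extent, $(H2)$) --- ensuring that the $H$-controlled perturbation $K$ is genuinely dominated by the $\|u\|_V^p$ term produced by the $p$-Laplacian and by the affine-in-$\mu(\|\cdot\|_H^2)$ terms --- which succeeds precisely because $p\ge2$.
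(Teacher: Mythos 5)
Your proposal is correct and follows essentially the same route as the paper: the paper likewise treats (\ref{e606}) as the abstract equation with $A=\bar A+K$ on the triple $W_0^{1,p}(\Lambda)\subset L^2(\Lambda)\subset V^*$, cites the standard fact that the $p$-Laplacian satisfies $(H1)$--$(H4)$ with $\alpha=p$, uses (\ref{31}) to absorb the lower-order term $K$, and then invokes Theorems \ref{THY} and \ref{LDP}. The only difference is that you spell out the perturbation and exponent bookkeeping (including the implicit boundedness of $t\mapsto\|K(t,0,\delta_0)\|_H$ needed for the growth bound) which the paper leaves to the reference \cite{LR}.
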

\begin{Rem} Both the well-posedness and LDP for Mckean-Vlasov stochastic $p$-Lapace equation driven by L\'{e}vy noise are new in the literature. In particular, if we take $p=2$, $\bar{A}$ reduces to the classical Laplace operator. Therefore, our result above also covers a class of distribution dependent semilinear SPDEs.
\end{Rem}

\subsection{McKean-Vlasov SDEs}
Our main results are applicable to McKean-Vlasov SDE models. For instance, we consider $V = H = \mathbb{R}^d$ with the Euclidean norm $|\cdot|$ and  inner product $\langle \cdot,\cdot \rangle,$
\begin{equation}\label{e5501}
dX^\varepsilon(t)= b(t,X^\varepsilon(t),\mathcal{L}_{X^\varepsilon(t)})dt + \varepsilon \int_Z{f(t, X^{\varepsilon}( t- ) ,\mathcal{L}_{X^{\varepsilon}( t )},z )}\widetilde{N}^{\varepsilon ^{-1}}( dt,dz ),\quad X^\varepsilon(0)=x.
\end{equation}
Suppose the coefficients are measurable and satisfy the following conditions.
\begin{hypothesis}\label{h2}
For all $t\in[0,T]$, $u,v\in \mathbb{R}^d$ and $\mu,\nu\in\mathcal{P}_2(\mathbb{R}^d)$,
\begin{enumerate}
\item [$({\mathbf{A}}{\mathbf{1}})$] (Continuity) The map
\begin{eqnarray*}
\mathbb{R}^d\times\mathcal{P}_2(\mathbb{R}^d)\ni(u,\mu)\mapsto b(t,u,\mu)
\end{eqnarray*}
is continuous.
\item [$({\mathbf{A}}{\mathbf{2}})$] (Monotonicity) There exists a constant $C>0$ such that
\begin{eqnarray*}
\langle b(t,u,\mu)-b(t,v,\nu),u-v\rangle \leq C\big(|u-v|^2+\mathbb{W}_{2,\mathbb{R}^d}(\mu,\nu)^2\big).
\end{eqnarray*}
\item [$({\mathbf{A}}{\mathbf{3}})$] (Growth) There is a constant $C>0$ such that
\begin{eqnarray*}
|b(t,u,\mu)|\leq C\big(1+|u|+\mu(|\cdot|^2)^\frac{1}{2}\big).
\end{eqnarray*}
\item [$({\mathbf{A}}{\mathbf{4}})$]
 There exists a constant $C>0$ such that
\begin{equation*}
\|f(t,x,\mu,z)-f(t,y,\nu,z)\| \leq C(\|x-y\|+\mathbb{W}_{2,\mathbb{R}^d} (\mu,\nu)),
\end{equation*}
\begin{equation*}
\|f(t,x,\mu,z)\|\leq C(\|x\|+ \mu(\|\cdot\|^2)^\frac{1}{2}+1),
\end{equation*}
where $\|\cdot\|$ denotes the matrix norm.
\end{enumerate}
\end{hypothesis}

By Theorem \ref{LDP}, we can derive the well-posedness and LDP for the McKean-Vlasov SDEs.

\begin{theorem}
Suppose that Hypothesis \ref{h2} hold, then for any initial value $x \in \mathbb{R}^d$ and $T>0,$ Eq.~(\ref{e5501}) has a unique solution $X^\varepsilon(t), t \in [0,T]$, and $X^\varepsilon(t)$ satisfies the LDP on $D([0,T];\mathbb{R}^d)$ with the rate function
$$I(\phi)=\inf\{ Q(g):\phi=X^g,g \in S\},\quad \phi \in D([0,T];H),$$
where $X^g$ is the solution of the corresponding skeleton equation.
\end{theorem}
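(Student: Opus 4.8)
The plan is to deduce this statement directly from Theorems \ref{THY} and \ref{LDP} by instantiating the variational framework with the trivial Gelfand triple $V=H=V^*=\mathbb{R}^d$ (so that $_{V^*}\langle\cdot,\cdot\rangle_V=\langle\cdot,\cdot\rangle$ and $\|\cdot\|_V=\|\cdot\|_H=|\cdot|$), taking $A(t,u,\mu):=b(t,u,\mu)$, and fixing the exponent $\alpha=2$. Thus the whole argument reduces to checking that Hypothesis \ref{h2} implies Hypothesis \ref{h1}; once this is done, Theorem \ref{THY} yields the unique càdlàg $\mathcal{F}_t$-adapted solution (the stated solution space $L^\alpha([0,T]\times\Omega;V)\cap L^2(\Omega;L^\infty([0,T];H))$ collapsing to the classical $L^2([0,T]\times\Omega;\mathbb{R}^d)\cap L^2(\Omega;L^\infty([0,T];\mathbb{R}^d))$), and Theorem \ref{LDP} yields the LDP on $D([0,T];\mathbb{R}^d)$ with the asserted rate function, the skeleton equation \eqref{e05} and the limit ODE \eqref{e306} reducing to the stated ones.

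For the verification: condition $(H1)$ is immediate from the continuity in $(\mathbf{A1})$. Condition $(H3)$ is exactly $(\mathbf{A2})$ multiplied by $2$, since here the duality pairing is the Euclidean inner product and $\mathbb{W}_{2,H}=\mathbb{W}_{2,\mathbb{R}^d}$. Condition $(H4)$ with $\alpha=2$ follows by squaring the growth bound $(\mathbf{A3})$ and using $(a+b+c)^2\le 3(a^2+b^2+c^2)$, which gives $|b(t,x,\mu)|^2\le 3C^2(1+|x|^2+\mu(|\cdot|^2))$. For $(H2)$ one estimates $2\langle b(t,x,\mu),x\rangle\le 2|b(t,x,\mu)|\,|x|$, inserts $(\mathbf{A3})$, and applies Young's inequality to get a bound of the form $c'(1+|x|^2+\mu(|\cdot|^2))$; since $V=H$ one may freely add and subtract $\delta|x|^2$ for any $\delta>0$ and absorb it into the constant, producing the required coercive form $c(1+|x|^2+\mu(|\cdot|^2))-\delta|x|^2$. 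Finally $(H5)$ is precisely $(\mathbf{A4})$ with the choice $l_1\equiv l_2\equiv C$.

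The only point that requires a word of care is the integrability class of $l_1,l_2$: Hypothesis \ref{h1} asks for $l_1\in\mathcal{H}^\infty\cap L_2(\theta_T)$ and $l_2\in\mathcal{H}_2\cap L_2(\theta_T)$. A constant function lies in $\mathcal{H}^\infty$ and in $\mathcal{H}_2$ automatically (for any $\Gamma$ with $\theta_T(\Gamma)<\infty$, $\int_\Gamma e^{\varpi C}\theta(dz)ds=e^{\varpi C}\theta_T(\Gamma)<\infty$ and likewise for $e^{\delta C^2}$), but membership in $L_2(\theta_T)$ forces $\theta(Z)<\infty$; so this corollary is being read under the standing assumption that $\theta$ is finite (otherwise one should replace the constant $C$ in $(\mathbf{A4})$ by a function in the appropriate classes). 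Apart from this bookkeeping, there is no genuine obstacle here: the result is a straightforward specialization of the two main theorems, and the proof consists entirely of the hypothesis-matching just described.
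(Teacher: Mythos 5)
Your proposal is correct and takes essentially the same route as the paper: the paper's argument is simply to invoke Theorems \ref{THY} and \ref{LDP} with the trivial Gelfand triple $V=H=\mathbb{R}^d$, $A=b$ and $\alpha=2$, leaving the verification of $(H1)$--$(H5)$ from $(\mathbf{A1})$--$(\mathbf{A4})$ implicit, which is exactly the hypothesis-matching you carry out. Your caveat that constant $l_1,l_2$ belong to $L_2(\theta_T)$ only when $\theta(Z)<\infty$ is a fair observation about the paper's own formulation of Hypothesis \ref{h2} rather than a defect in your argument.
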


\begin{Rem}
In \cite{LSZZ}, the authors first established LDP for McKean-Vlasov SDEs driven by L\'{e}vy noise. In this work, by applying our main result, we give a more succinct proof for the LDP associated with some McKean-Vlasov SDEs under monotonicity
and linear growth assumptions.
\end{Rem}

\section{Proof of main results}\label{mainpr}

\subsection{Proof of Theorem \ref{THY}}\label{PT2.1}
\setcounter{equation}{0}
 \setcounter{definition}{0}

The proof Theorem \ref{THY} is divided into the following two steps.

\medskip
\textbf{Step 1}: For any $0 \leq s < t \leq T,$ $\mu (\cdot) \in C([s,T]; \mathcal{P}_2(H))$ and $\psi \in \mathcal{P}_2(H),$ we consider the following SPDE with initial distribution $\mathcal{L}_{X_{s,s}^{\psi,\mu}}=\psi$

\begin{equation}\label{e701}
dX_{s,t}^{\psi,\mu}=A^{\mu}(t,X_{s,t}^{\psi,\mu})dt+ \int_{Z}f^{\mu}(t,X_{s,t-}^{\psi,\mu},z)
\widetilde{N}(dt,dz), t \in [s,T],
\end{equation}
where $A^{\mu}(t,x):= A(t,x,\mu)$ and $f^{\mu}(t,x,z):= f(t,x,\mu,z)$. According to \cite[Theorem 1.2]{BLZ}, equation (\ref{e701}) has a unique solution with initial distribution in $\mathcal{P}_2(H).$ The solution $\{X_{s,t}^{\psi,\mu}\}_{t \in [s,T]}$ is a $(\mathcal{F}_t)_{t \geq 0} $ adapt process, which satisfies $\mathcal{L}_{X_{s,t}^{\psi,\mu}} \in C([s,T]; \mathcal{P}_2(H))$ and takes values in $H,$ meanwhile
$$
\mathbb{E}\Big[\sup_{t \in [s,T]} \|X_{s,t}^{\psi,\mu}\|_H^2\Big]< \infty.
$$

For the solution $\{X_{s,t}^{\psi,\mu}\}_{t \in [0,T]}$ to (\ref{e701}), we define the following map
$$
\Phi_{s,\cdot}^\psi : C([s,T]; \mathcal{P}_2(H)) \to C([s,T]; \mathcal{P}_2(H)),
$$
by
$$
\Phi_{s,t}^\psi (\mu):= \mathcal{L}_{X_{s,t}^{\psi,\mu}},\quad t \in [s,T],~\mu \in C([s,T]; \mathcal{P}_2(H)).
$$
Note that $(X^\psi,\mu)$ is the solution to the following distribution dependent SPDE with initial distribution $\psi$
\begin{equation}\label{e702}
dX(t)=A(t,X(t),\mathcal{L}_{X(t)})dt+ \int_{Z}f(t,X(t-),\mathcal{L}_{X(t)},z)\widetilde{N}(dt,dz)
\end{equation}
if and only if $X_{s,t}^{\psi}=X_{s,t}^{\psi,\mu},\mu (t)= \Phi _{s,t}^{\psi} (\mu), t \in [s,T].$
More precisely, the fixed point of map $\Phi _{s,\cdot}^{\psi}$ is the solution to (\ref{e702}).

Next we prove the invariant compressibility of
$\Phi _{s,\cdot}^{\psi}$ for the following complete metric
$$
d_t (\mu, \nu):= \sup_{r \in [s,t]} e^{- \lambda r} \mathbb{W}_{2,H}(\mu(r),\nu(r)),
$$
where, for any $0 \leq s<t \leq T, \mu,\nu \in C([s,T]; \mathcal{P}_2(H)),$ constant $\lambda > 0$ is in the subspace $M_t:=\{\mu \in C([s,T]; \mathcal{P}_2(H)): \mu (s) = \psi \}.$

Let $\mu,\nu \in C([s,T]; \mathcal{P}_2(H)),X_{s,s}^\psi$ is a $\mathcal{F}_s$-measurable random variable and $\mathcal{L}_{X_{s,s}^{\psi}}=\psi.$  For any $t \in [s,T]$, we consider the following SPDEs,
$$
d X_{s, t}^{\psi, \mu}=A^\mu(t, X_{s, t}^{\psi, \mu}) d t + \int_0^t\int_{Z} f^\mu(t, X_{s, t-}^{\psi, \mu}, z) \widetilde{N}(d t, d z), X_{s, s}^{\psi, \mu}=X_{s, s}^\psi,
$$
$$
d X_{s, t}^{\psi, \nu}=A^\nu(t, X_{s, t}^{\psi, \nu}) d t +\int_0^t\int_{Z} f^\nu(t, X_{s, t-}^{\psi, \nu}, z) \widetilde{N}(d t, d z), X_{s, s}^{\psi, \nu}=X_{s, s}^\psi .
$$
For $\|\cdot\|_H^2,$ we apply It\^{o}'s formula
\begin{align*}
& \|X_{s, t}^{\psi, \mu}-X_{s, t}^{\psi, \nu}\|_H^2 \nonumber\\
= & \int_s^t\Big[2_{V^*}\langle A(r, X_{s, r}^{\psi, \mu}, \mu(r))-A(r, X_{s, r}^{\psi, \nu}, \nu(r)), X_{s, r}^{\psi, \mu}-X_{s, r}^{\psi, \nu}\rangle_V\Big] d r \nonumber\\
& +2 \int_s^t \int_{Z}\langle X_{s, r-}^{\psi, \mu}-X_{s, r-}^{\psi, \nu} , f(r, X_{s, r-}^{\psi, \mu}, \mu(r), z)-f(r, X_{s, r-}^{\psi, \nu}, \nu(r),z)\rangle_H \widetilde{N}(d r, d z)\nonumber\\
& +\int_s^t \int_{Z}\Big[\|(X_{s, r-}^{\psi, \mu}+f(r, X_{s, r-}^{\psi, \mu}, \mu(r), z))-(X_{s, r-}^{\psi, \nu}+f(r, X_{s, r-}^{\psi, \nu}, \nu(r), z))\|_H^2\nonumber\\
& -(\|X_{s, r-}^{\psi, \mu}\|_H^2-\|X_{s, r-}^{\psi, \nu}\|_H^2) \nonumber\\
& -2\langle X_{s, r-}^{\psi, \mu}-X_{s, r-}^{\psi, \nu},(f(r, X_{s, r-}^{\psi, \mu},\mu(r), z)-f(r, X_{s, r-}^{\psi, \nu},\nu(r), z)\rangle_H\Big] N(d r, d z) .
\end{align*}

Note that
\begin{equation}\label{e703}
\left|\|x+h\|_H^2-\|x\|_H^2-2\langle x, h\rangle_H\right|=\|h\|_H^2 .
\end{equation}

According to $(H5)(ii)$, we have
\begin{equation}\label{e704}
\int_Z \|f^{\mu}(t,v,z)\|_H^2\theta(dz) \leq C( 1 +\|v\|_H^2 + \mu(\|v\|_H^2) ).
\end{equation}

From (\ref{e703}) and (\ref{e704}) we have
\begin{align}\label{e705}
& \mathbb{E} \Bigg[ \int_s^t \int_{Z}\Big(\|X_{s, r-}^{\psi, \mu}+f(r, X_{s, r-}^{\psi, \mu}, \mu(r), z)\|_H^2-\|X_{s, r-}^{\psi, \mu}\|_H^2 \nonumber\\
& - 2\langle X_{s, r-}^{\psi, \mu}, f(r, X_{s, r-}^{\psi, \mu}, \mu(r), z)\rangle_H\Big) N(d r, d z)\Bigg] \nonumber\\
= & \mathbb{E} \Bigg[\int_s^t \int_{Z}\Big[\|X_{s, r}^{\psi, \mu}+f(r, X_{s, r}^{\psi, \mu}, \mu(r), z)\|_H^2-\|X_{s, r}^{\psi, \mu}\|_H^2\nonumber\\
& -2\langle X_{s, r}^{\psi, \mu}, f(r, X_{s, r}^{\psi, \mu}, \mu(r), z)\rangle_H\Big] \theta(d z) d r \Bigg] \nonumber\\
= & \mathbb{E} \Bigg[\int_s^t \int_{Z}\|f(r, X_{s, r}^{\psi, \mu}, \mu(r), z)\|_H^2 \theta(d z) d r \Bigg] \nonumber\\
\leq & C(|t-s|+ \|X_{s, t}^{\psi, \mu}\|_H^2+ \mu(\|X_{s, t}^{\psi, \mu}\|_H^2) ).
\end{align}

By $(H3)$ and (\ref{e705}), for any $ \kappa > 0$
\begin{align*}
& e^{-\kappa t} \mathbb{E}\|X_{s, t}^{\psi, \mu}-X_{s, t}^{\psi, \nu}\|_H^2 \\
= & \int_s^t e^{-\kappa r} d(\mathbb{E}\|X_{s, r}^{\psi, \mu}-X_{s, r}^{\psi, \nu}\|_H^2)+\int_s^t \mathbb{E}\|X_{s, r}^{\psi, \mu}-X_{s, r}^{\psi, \nu}\|_H^2 d e^{-\kappa r} \\
\leq & -\kappa \int_s^t e^{-\kappa r} \mathbb{E}\|X_{s, r}^{\psi, \mu}-X_{s, r}^{\psi, \nu}\|_H^2 d r+2 c \int_s^t e^{-\kappa r} \mathbb{E}[\|X_{s, r}^{\psi, \mu}-X_{s, r}^{\psi, \nu}\|_H^2 d r \\
& +c \int_s^t \mathbb{W}_{2, H}(\mu(r), \nu(r))^2 d r .
\end{align*}
Let $\kappa = 2c,$ then
\begin{equation}\label{e706}
e^{-2 c t} \mathbb{E}\|X_{s, t}^{\psi, \mu}-X_{s, t}^{\psi, \nu}\|_H^2 \leq(2 c) \int_s^t e^{-c r} \mathbb{W}_{2, H}(\mu(r), \nu(r))^2 d r.
\end{equation}
We take both sides of (\ref{e706}) to maximizes. Note that the joint distribution form of $(X_{s, \cdot}^{\psi, \mu},X_{s,\cdot}^{\psi, \nu})$ is the coupling of $(\Phi_{s, \cdot}^{\psi}(\mu),\Phi_{s, \cdot}^{\psi}(\nu)),$ then
\begin{equation}\label{e707}
d_t\left(\Phi_{s, \cdot}^\psi(\mu), \Phi_{s, \cdot}^\psi(\nu)\right) \leq(2 c)^{\frac{1}{2}}(t-s)^{\frac{1}{2}} d_t(\mu, \nu) .
\end{equation}
Now $\lambda=c,$ we take $t_0 \in (0,\frac{1}{2c})$ such that $(2c)t_0<1,$ then for any $s \in [0,T),$ the map $\Phi_{s,\cdot}^\psi$ in $M_{(s+t_0)\wedge T}$ is strictly compressed mapping under $d_t,$ so it has a unique fixed point.

\medskip
\textbf{Step 2}: Let $s=0, \psi:=\mathcal{L}_{X_0},$ according to Banach fixed point theorem, for any $t \in [0,t_0 \wedge T],$ exist unique $\mu(t)=\Phi_{0,t}^\psi(\mu).$ Meanwhile, from the definition of $\Phi_{s,t}^\psi,$ we can see that $X_{0,t}^{\psi,\mu}$ is the solution to (\ref{e702}) before $2t_0 \wedge T.$ If for every solution of (\ref{e702}) we take $\mu (t):= \mathcal{L}_{X(t)},$ we can infer that $\mu(t)$ is the solution of the following equation
\begin{equation}\label{e708}
\mu(t)=\Phi_{0, t}^\psi(\mu), t \in\left[0, t_0 \wedge T\right].
\end{equation}
Therefore, the uniqueness of the solution of equation (\ref{e702}) can be known from the uniqueness of the solution of equation (\ref{e708}).

It should be noted that if $t_0 \geq T,$ we complete the proof of the uniqueness and existence of equation (\ref{e702}). If $t_0 < T,$ because $t_0$ is independent of $X_0,$ we take $s=t_0, \psi = \mathcal{L}_{X(t_0)}.$ According to (\ref{e707}) we know that equation (\ref{e702}) exists a unique solution before $2t_0 \wedge T.$ Repeating the same steps a finite number of times, we can obtain the existence and uniqueness of the equation (\ref{e702}) solution before time $T.$
  \hspace{\fill}$\Box$

\subsection{Weak convergence analysis}\label{PT2.2}
\setcounter{equation}{0}
 \setcounter{definition}{0}


%

The proof of LDP is based on the weak convergence approach, and we first briefly provide the idea concerning this lengthy proof. Intuitively, as the parameter $\varepsilon \to 0$ in stochastic system (\ref{YE}), the noise term in equation (\ref{YE}) vanishes, then we obtain the following differential equation
\begin{equation}\label{X0}
\frac{d X^0(t)}{d t} = A (t, X^0(t),\mathcal{L}_{ X^0(t)}),\quad X^0(0) = x .
\end{equation}
We can know that (\ref{X0}) admits a unique solution by the hypothesis \ref{h1} (cf. \cite{Liu11}), always denoted by $X^0$ in this paper, which satisfies $X^0 \in C([0,T];H).$ In addition, we mention that the solution $X^0$ of (\ref{X0}) is a deterministic path and its distribution $\mathcal{L}_{X^0(t)} :=\delta_{X^0(t)},$ where $\delta_{X^0(t)}$ is the Dirac measure of $X^0(t).$

Recall (\ref{YE}) and for any $\mu \in C([0,T];\mathcal{P}_2(H))$ (which can be viewed as a deterministic measure flow), we consider the following reference SPDE
\begin{equation}\label{SPDE}
d\widetilde{X}^{\varepsilon}( t ) =A^{\mu ( t )}( t,\widetilde{X}^{\varepsilon}( t ) ) dt+\varepsilon \int_Z{f^{\mu ( t )}( t,\widetilde{X}^{\varepsilon}( t- ) ,z )}\widetilde{N}^{\varepsilon ^{-1}}( dt,dz ) ,
\end{equation}
where we denote $A^{\mu}( t, x ) = A( t, x ,\mu),$ the other term is similar. Note that (\ref{SPDE}) is a classical SPDE, one can apply the Yamada-Watanabe theorem so that there exists a measurable map $\mathcal{G}_\mu : S \to D([0,T];H) $ such that we have the representation
\begin{equation}\label{re}
\widetilde{X}^{\varepsilon}( t )=\mathcal{G}_\mu (\varepsilon N^{\varepsilon^{-1}}).
\end{equation}

Now we fix $\mu =\mu^\varepsilon:=\mathcal{L}_{X^\varepsilon},$ then (\ref{YE}) reduces to
\begin{equation}\label{fe00}
\left\{ \begin{aligned}	&dX^\varepsilon(t)=A^{\mu(t)}(t,X^\varepsilon(t))dt+\varepsilon \int_Z  f^{\mu(t)}(t,X^\varepsilon(t-),z)\widetilde{N}^{\varepsilon ^{-1}}(dt,dz),\\
	&X^\varepsilon(0)=x \in H.
\end{aligned} \right.
\end{equation}
We observe that $X^\varepsilon(t)$ is also a solution to system (\ref{SPDE}) with $\mu=\mu^\varepsilon,$ then by the strong uniqueness of system (\ref{SPDE}), in this case
$$
X^{\varepsilon}(t)=\widetilde{X}^{\varepsilon}(t), \quad t\in [0,T].
$$
Therefore, by the representation of (\ref{re}), we obtain
$$
X^\varepsilon =\mathcal{G}_\mu(\varepsilon N^{\varepsilon^{-1}})=\mathcal{G}_{\mu^\varepsilon}(\varepsilon N^{\varepsilon^{-1}}).
$$

For any $\varphi_\varepsilon,$ we define
$$
X^{\varphi_\varepsilon}:=\mathcal{G}^\varepsilon(\varepsilon N^{\varepsilon^{-1}\varphi_\varepsilon}),
$$
where $\mathcal{G}^\varepsilon$ is a measurable mapping from $\mathcal{M}_{FC}(Z_T)$ to $D([0,T];H).$ The process $X^{\varphi_\varepsilon}$ is the solution to following stochastic controlled equation
\begin{align}\label{CE}
X^{\varphi_\varepsilon}(t)=&x+\int_0^t A(s,X^{\varphi_\varepsilon}(s),\mathcal{L}_{X^{\varepsilon}(s)})ds
+\int_0^t\int_{Z}f(s,X^{\varphi_\varepsilon}(s),\mathcal{L}_{X^{\varepsilon}(s)},z)({\varphi_\varepsilon}(s,z)-1)\theta(dz)ds\nonumber\\
&+\varepsilon\int_0^t\int_{Z}f(s,X^{\varphi_\varepsilon}(s-),\mathcal{L}_{X^{\varepsilon}(s)},z)\widetilde{N} ^{\varepsilon ^{-1}\varphi_\varepsilon}(dz,ds).
\end{align}

We are now in the position to define the following skeleton equation
\begin{align}\label{SE}
X^g(t)=&x+\int_0^tA(s,X^g(s),\mathcal{L}_{X^0(s)})ds\nonumber\\
&+\int_0^t\int_{Z}f(s,X^g(s),\mathcal{L}_{X^0(s)},z)(g(s,z)-1)\theta(dz)ds,
\end{align}
where $g \in S.$ According to existence and uniqueness of the solution to skeleton equation (refer Lemma \ref{SEE} below), we know that there is a measurable mapping defined by
 $\mathcal{G}^0: S \to D([0,T];H)$
\begin{align}\label{G0}
\mathcal{G}^0\left( g \right) :=\left\{ \begin{array}{l}
	X^g,\ \ g\in S,\\
	0,\ \ \text{otherwise}.\\
\end{array} \right.
\end{align}

Set
$$
\mathcal{A}^N:=\Big\{\varphi \in \mathcal{A}\text{ and }\varphi(\omega) \in S^N,~~\mathbb{P}\text{-}a.s.\Big\}.
$$

Let $\{K_n \subset Z,n=1,2,...\}$ be an increasing sequence of compact sets of $Z$ such that $\bigcup_{n=1}^\infty K_n =Z.$ For each $n,$ let $K_n^c=Z\setminus K_n$ and

\begin{align*}
\mathcal{A}_{b,n}:=
&\Big\{ \varphi \in \mathcal{A} : \text{for all }(t,\omega) \in [0,T]\times \Omega,n\geq\varphi(t,x,\omega)\geq\frac{1}{n}\text{ if }x \in K_n \\
&\text{and }\varphi(t,x,\omega)=1\text{ if }x\in K_n^c\Big\},
\end{align*}	
and let $\mathcal{A}_b =\bigcup_{n=1}^\infty \mathcal{A}_{b,n}.$ Define $\widetilde{\mathcal{A}}^N=\mathcal{A}^N\cap\mathcal{A}_b.$

According to \cite[Theorem 4.1]{LSZZ}, Theorem \ref{LDP} is proved once we can clarify:

\noindent \textbf{Condition 4.1}:
\begin{enumerate}
\item [$(a)$]For any $N<\infty,$ let $g_n,$ $n\geq1,$ $g \in S^N$ be such that $g_n \to g$ as $n \to \infty.$ Then
$$
\mathcal{G}^0(g_n) \to \mathcal{G}^0(g)~~~~\text{in }D([0,T];H).
$$

\item [$(b)$]For any $N<\infty,$ let $\{\varphi_\varepsilon\}_{\varepsilon>0}\subset \widetilde{\mathcal{A}}^N$. Then, for any $c>0$
$$
\lim_{\varepsilon \to 0}\mathbb{P}\{\rho(X^{\varphi_\varepsilon},Y^{\varphi_\varepsilon})>c\}=0 ,
$$
where
$$
X^{\varphi_\varepsilon}:=\mathcal{G}^\varepsilon(\varepsilon N ^{\varepsilon ^{-1}\varphi_\varepsilon}),~~Y^{\varphi_\varepsilon}:=\mathcal{G}^0(\varphi_\varepsilon),
$$
and $\rho(\cdot,\cdot)$ stands for the Skorohod metric to the space $D([0,T];H).$
\end{enumerate}

In the following, we intend to verify the weak convergence criterion (i.e. Condition 4.1) for the above mentioned maps $\mathcal{G}^\varepsilon$ and $\mathcal{G}^0.$

\subsection{Skeleton and controlled equations}\label{Ske}

\begin{definition}
Giving $g \in S$, $X^g$ is called a solution to the skeleton equation (\ref{SE}) if the following conditions are satisfied
\begin{equation*}\label{e401}
X^g \in C([0,T];H) \cap L^\alpha([0,T];V),
\end{equation*} and for all $t \in [0,T]$
\begin{equation}\label{e402}
X^g(t)=x+\int_0^tA(s,X^g(s),\mathcal{L}_{X^0(s)})ds
+\int_0^t\int_{Z}f(s,X^g(s),\mathcal{L}_{X^0(s)},z)(g(s,z)-1)\theta(dz)ds,\mathbb{P}\text{-}a.s.
\end{equation}
holds in $V^*$.
\end{definition}

Before giving the prior estimates we need, the following lemma will be used in later proof.

\begin{lemma}\label{YEE}
Suppose that $(H1)$-$(H5)$ hold. Then, for each $x \in H,$ there is a unique solution to (\ref{YE01}) and exists a constant $C \in (0,\infty)$ such that
\begin{equation}\label{eYEE}
\mathbb{E}\Bigg[\sup_{s \in [0,t]}\|X(s)\|_H^2\Bigg] + 2\delta\mathbb{E}\Bigg[\int_0^t\|X(s)\|_V^\alpha ds \Bigg] \leq C e^{CT}(\|x\|_H^2 + 1).
\end{equation}
\end{lemma}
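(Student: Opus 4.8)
The plan is to obtain the estimate \eqref{eYEE} by applying Itô's formula to $\|X(s)\|_H^2$ along the (already constructed, via Theorem \ref{THY}) unique solution $X$ of \eqref{YE01}, then taking suprema and expectations and closing the loop with Gronwall's inequality. Since the existence and uniqueness is already guaranteed by Theorem \ref{THY}, the content of the lemma is purely the uniform bound; a minor technical point is that Theorem \ref{THY} already gives $X\in L^\alpha([0,T]\times\Omega;V)\cap L^2(\Omega;L^\infty([0,T];H))$, so all the integrals appearing below are a priori finite, which legitimises the manipulations.

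First I would write, for $t\in[0,T]$,
\begin{align*}
\|X(t)\|_H^2 = \|x\|_H^2 &+ \int_0^t 2\,{}_{V^*}\langle A(s,X(s),\mathcal L_{X(s)}),X(s)\rangle_V\,ds\\
&+ \int_0^t\!\int_Z \big(\|X(s-)+f(s,X(s-),\mathcal L_{X(s)},z)\|_H^2-\|X(s-)\|_H^2\big)\,\widetilde N(ds,dz)\\
&+ \int_0^t\!\int_Z \|f(s,X(s),\mathcal L_{X(s)},z)\|_H^2\,\theta(dz)\,ds,
\end{align*}
where the last line comes from compensating the jump part and using the identity \eqref{e703}. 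For the drift term I would invoke the coercivity hypothesis $(H2)$, which yields $2\,{}_{V^*}\langle A,X\rangle_V\le c(\|X\|_H^2+\mathcal L_{X(s)}(\|\cdot\|_H^2)+1)-\delta\|X\|_V^\alpha$; crucially $\mathcal L_{X(s)}(\|\cdot\|_H^2)=\mathbb E\|X(s)\|_H^2$, so after taking expectations this term contributes only $2c\,\mathbb E\|X(s)\|_H^2$ plus $c$ and the good negative term $-\delta\,\mathbb E\|X(s)\|_V^\alpha$. For the compensated stochastic integral I would use the Burkholder–Davis–Gundy inequality together with the linear growth bound $(H5)(ii)$ (which gives $\int_Z\|f(s,X(s),\mathcal L_{X(s)},z)\|_H^2\theta(dz)\le C\,l_2$-type bound $\le C(\|X(s)\|_H^2+\mathbb E\|X(s)\|_H^2+1)$ since $l_2\in L_2(\theta_T)$); the BDG estimate produces a term of the form $\mathbb E\big(\int_0^t (\|X(s)\|_H^2+\dots)\,ds\big)^{1/2}$ times the sup, which is absorbed into $\frac12\mathbb E[\sup_{s\le t}\|X(s)\|_H^2]$ by Young's inequality. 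The compensator term $\int_0^t\!\int_Z\|f\|_H^2\theta(dz)ds$ is handled directly by $(H5)(ii)$ in the same way.

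Collecting everything, I would arrive at
\[
\tfrac12\,\mathbb E\Big[\sup_{s\in[0,t]}\|X(s)\|_H^2\Big] + \delta\,\mathbb E\Big[\int_0^t\|X(s)\|_V^\alpha\,ds\Big] \le C(\|x\|_H^2+1) + C\int_0^t \mathbb E\Big[\sup_{r\in[0,s]}\|X(r)\|_H^2\Big]\,ds,
\]
and then Gronwall's inequality applied to $t\mapsto \mathbb E[\sup_{s\le t}\|X(s)\|_H^2]$ gives $\mathbb E[\sup_{s\le t}\|X(s)\|_H^2]\le Ce^{CT}(\|x\|_H^2+1)$; feeding this back into the displayed inequality bounds the $L^\alpha(V)$ term as well, which is exactly \eqref{eYEE} (up to relabelling the constant and the factor $2\delta$).

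The main obstacle I anticipate is not conceptual but a matter of care: (i) justifying the BDG step and the absorption into the supremum rigorously — one should ideally run the argument first with a stopping time $\tau_n$ localising $\sup\|X\|_H$, derive the estimate with constants independent of $n$, and only then let $n\to\infty$ by monotone convergence; and (ii) keeping track of the distribution-dependent term $\mathcal L_{X(s)}(\|\cdot\|_H^2)=\mathbb E\|X(s)\|_H^2$ — the key simplification special to the McKean–Vlasov setting is that this is a deterministic quantity dominated by $\mathbb E[\sup_{r\le s}\|X(r)\|_H^2]$, so it fits into the same Gronwall argument without any additional machinery (no need for a coupling estimate here, unlike in the uniqueness proof). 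Everything else is the routine variational-framework Itô-formula computation, so I would not belabour it.
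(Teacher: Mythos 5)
Your proposal is correct and follows essentially the same route as the paper: Itô's formula for $\|X(t)\|_H^2$, the coercivity $(H2)$ and growth $(H5)(ii)$ together with $\mathcal L_{X(s)}(\|\cdot\|_H^2)=\mathbb E\|X(s)\|_H^2$, a stopping-time localisation, and Gronwall, with existence/uniqueness taken from Theorem \ref{THY}. The only divergence is in one step: you absorb the running supremum of the compensated stochastic integral via BDG and Young, whereas the paper simply invokes the martingale property of that term after localisation and estimates the remaining jump term $I^2$ by $(H5)(ii)$ --- your treatment of that term is, if anything, the more careful one, and everything else matches the paper's argument.
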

\begin{proof}
Applying It\^{o}'s formula, we have
\begin{align*}
\|X(t)\|_H^2= \|x\|_H^2+\int_0^t2_{V *}\langle A(s, X(s), \mathcal{L}_{X(s)}), X(s)\rangle_V d s
+I^1(t)+I^2(t), \quad t \in[0, T],
\end{align*}
where
\begin{align*}
I^1(t) & :=2\int_0^t \int_{Z}\langle X(s-), f(s, X(s-), \mathcal{L}_{X(s)}, z)\rangle_H \widetilde{N}(d s, d z) \\
I^2(t) & :=\int_0^t \int_{Z}[\|X(s-)+f(s, X(s-), \mathcal{L}_{X(s)}, z)\|_H^2-\|X(s-)\|_H^2 \\
& ~~~~-2\langle X(s-), f(s, X(s-), \mathcal{L}_{X(s)}, z)\rangle_H] N(d s, d z) .
\end{align*}
Define a stopping time
$$
\tau_R  :=\inf \left\{t \in[0, T]:\|X(t)\|_H>R\right\}, R>0.
$$

Then $\tau_R \uparrow T$ as $R \uparrow \infty$. By $(H5)(ii)$, we have

\begin{align}\label{l4.1e1}
\mathbb{E} I^1_{t \wedge \tau_R} & :=\mathbb{E}\Bigg[\int_0^{t \wedge \tau_R} \int_{Z}\langle X(s), f(s, X(s), \mathcal{L}_{X(s)}, z)\rangle_H \widetilde{N}(d s, d z) \Bigg]\nonumber\\
&=\mathbb{E}\Bigg[\int_0^{t \wedge \tau_R} \int_{Z}\langle X(s), f(s, X(s), \mathcal{L}_{X(s)}, z)\rangle_H \theta(d z) d s \Bigg]\nonumber\\
&<\infty .
\end{align}

From (\ref{l4.1e1}), we know that $I^1_{t \wedge \tau_R}$ is martingale. By $(H2)$, we obtain
\begin{align*}
& \mathbb{E}\Bigg[\sup _{s \in[0, t \wedge \tau_R]}\|X(s)\|_H^2\Bigg]+\delta \mathbb{E} \int_0^{t \wedge \tau_R}\|X(s)\|_V^\alpha d s \\
& \leq \|x\|_H^2+ C \mathbb{E} \int_0^{t \wedge \tau_R}\big(\|X(s)\|_H^2+\mathcal{L}_{X(s)}(\|\cdot\|_H^2)+1\big) d s
+ \mathbb{E} [I^2(t \wedge \tau_R)] .
\end{align*}
According to $(H5)(ii)$ and the Young's inequality, we have
\begin{align}\label{l4.1e2}
& \mathbb{E}[ I^2(t \wedge \tau) ] \nonumber\\
& = \mathbb{E} \Bigg[ \int_0^{t \wedge \tau} \int _z \big( \|X(s-)+ f(s,X(s-),\mathcal{L}_{X(s)},z)\|_H^2 - \|X(s-)\|_H^2 \nonumber\\
& \quad-2 \langle X(s-), f(s, X(s-), \mathcal{L}_{X(s)},z) \rangle _H \big) N(ds,dz) \Bigg] \nonumber\\
& \leq C \mathbb{E} \Bigg[ \int_0^{t \wedge \tau} \int_Z \|f(s,X(s-),\mathcal{L}_{X(s)},z)\|_H^2 N(ds,dz) \Bigg] \nonumber\\
& \leq C \mathbb{E} \Bigg[ \int_0^{t \wedge \tau} \int_Z l_2^2(s,z) \big(\|X(s)\|_H^2 + \mathcal{L}_{X(s)}(\|\cdot\|_H^2) +1 \big) \theta (dz) ds \Bigg] \nonumber\\
& \leq C \mathbb{E} \Bigg[ \big( \sup_{s \in [0, t \wedge \tau]} \|X(s)\|_H^2 +1 \big) \int_0^{t \wedge \tau} \int_Z l_2^2(s,z) \theta (dz) ds \Bigg] \nonumber\\
& \leq \frac{1}{2} \mathbb{E} \Bigg[ \sup_{s \in [0,t\wedge \tau]} \|X(s)\|_H^2 \Bigg] +C ,
\end{align}

where the constant $C > 0$ in each row may change. Then, we get
\begin{align*}
& \mathbb{E}\Bigg[\sup _{s \in[0, t \wedge \tau_R]}\|X(s)\|_H^2\Bigg]+2 \delta \mathbb{E} \int_0^{t \wedge \tau_R}\|X(s)\|_V^\alpha d s \\
& \leq C \|x\|_H^2 +C \mathbb{E} \int_0^t \mathbf{1}_{[0, \tau_R]}(s)(\|X(s)\|_H^2+\mathcal{L}_{X(s)}(\|\cdot\|_H^2)+1) d s \\
& \leq C \|x\|_H^2+C \int_0^t \mathbb{E}\|X(s)\|_H^2 d s . \\
\end{align*}

Taking $R \to \infty$, there is
\begin{align*}
& \mathbb{E}\Bigg[\sup _{s \in[0, t]}\|X(s)\|_H^2\Bigg]+2 \delta \mathbb{E} \int_0^t\|X(s)\|_V^\alpha d s \leq C \|x\|_H^2+C \int_0^t \mathbb{E}\Bigg[\sup _{r \in[0, s]}\|X(r)\|_H^2\Bigg] d s .
\end{align*}
Applying Gronwall's lemma gives that
\begin{align*}
& \mathbb{E}\left[\sup _{s \in[0, t]}\|X(s)\|_H^2\right]+2 \delta \mathbb{E} \int_0^t\|X(s)\|_V^\alpha d s \leq C e^{C T}\left(\|x\|_H^2+1\right) .
\end{align*}

At this point, we have completed the proof of Lemma \ref{YEE}.     \hspace{\fill}$\Box$
\end{proof}

\begin{lemma}\label{l4.2}
There is a constant $C_T>0$ such that
\begin{equation*}
\mathbb{E}\left[\sup _{t \in[0, T]}\left\|X^{\varepsilon}(t)-X^0(t) \right\|_H^2\right] \leq C_{T,\|x\|_H} .
\end{equation*}
\end{lemma}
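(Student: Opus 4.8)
\textbf{Proof proposal for Lemma \ref{l4.2}.}

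The plan is to estimate the difference $X^\varepsilon(t)-X^0(t)$ directly via It\^o's formula applied to $\|\cdot\|_H^2$, exploiting the structure of the two equations: $X^\varepsilon$ solves \eref{YE} with small L\'evy noise, while $X^0$ solves the deterministic equation \eref{X0}. Writing $Z^\varepsilon(t):=X^\varepsilon(t)-X^0(t)$, we have $Z^\varepsilon(0)=0$ and, by It\^o's formula for the square norm of a semimartingale in the Gelfand triple $V\subset H\subset V^*$ (cf. the expansions already carried out in the proof of Lemma \ref{YEE}),
\begin{align*}
\|Z^\varepsilon(t)\|_H^2
&= \int_0^t 2{}_{V^*}\langle A(s,X^\varepsilon(s),\mathcal{L}_{X^\varepsilon(s)})-A(s,X^0(s),\mathcal{L}_{X^0(s)}),\,Z^\varepsilon(s)\rangle_V\,ds\\
&\quad + M^\varepsilon(t) + R^\varepsilon(t),
\end{align*}
where $M^\varepsilon$ collects the compensated stochastic integral terms (with integrand of order $\varepsilon$) and $R^\varepsilon$ collects the quadratic jump-correction terms coming from $\varepsilon^2\int_Z\|f(s,X^\varepsilon(s-),\mathcal{L}_{X^\varepsilon(s)},z)\|_H^2\,\theta(dz)\,ds$ after taking expectation.

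First I would apply the monotonicity hypothesis $(H3)$ to the drift term: $2{}_{V^*}\langle A(s,X^\varepsilon(s),\mathcal{L}_{X^\varepsilon(s)})-A(s,X^0(s),\mathcal{L}_{X^0(s)}),Z^\varepsilon(s)\rangle_V\le c\|Z^\varepsilon(s)\|_H^2+c\,\mathbb{W}_{2,H}(\mathcal{L}_{X^\varepsilon(s)},\mathcal{L}_{X^0(s)})^2$. The key point specific to the McKean--Vlasov setting is that $\mathcal{L}_{X^0(s)}=\delta_{X^0(s)}$ is a Dirac mass, so $\mathbb{W}_{2,H}(\mathcal{L}_{X^\varepsilon(s)},\delta_{X^0(s)})^2=\mathbb{E}\|X^\varepsilon(s)-X^0(s)\|_H^2=\mathbb{E}\|Z^\varepsilon(s)\|_H^2$; this turns the Wasserstein term into another copy of the quantity we are estimating, which is exactly why no circularity arises. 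Second, I would take a stopping-time localization $\tau_R=\inf\{t:\|X^\varepsilon(t)\|_H>R\}$ (and let $R\to\infty$ at the end, as in Lemma \ref{YEE}), take expectations so that $\mathbb{E}M^\varepsilon(t\wedge\tau_R)=0$, and control $\mathbb{E}R^\varepsilon(t\wedge\tau_R)$ using $(H5)(ii)$, the fact that $l_2\in L_2(\theta_T)$, and the uniform second-moment bound $\mathbb{E}[\sup_{s\le T}\|X^\varepsilon(s)\|_H^2]\le C_{T,\|x\|_H}$ which follows from Lemma \ref{YEE} applied to \eref{YE} (noting the $\varepsilon$ prefactor only helps). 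This gives $\mathbb{E}R^\varepsilon(t)\le C\varepsilon^2$ or at worst $\le C\varepsilon$, in any case bounded uniformly in $\varepsilon\in(0,1]$. Combining, one arrives at
$$
\mathbb{E}\big[\|Z^\varepsilon(t\wedge\tau_R)\|_H^2\big]\le C\int_0^t \mathbb{E}\big[\|Z^\varepsilon(s\wedge\tau_R)\|_H^2\big]\,ds + C_{T,\|x\|_H},
$$
and Gronwall's lemma yields $\mathbb{E}[\|Z^\varepsilon(t)\|_H^2]\le C_{T,\|x\|_H}$ for all $t\in[0,T]$ after $R\to\infty$.

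To upgrade from $\sup_t\mathbb{E}$ to $\mathbb{E}\sup_t$, I would go back to the It\^o expansion before taking expectations, apply the Burkholder--Davis--Gundy inequality to $\sup_{t\le T}|M^\varepsilon(t)|$ — whose quadratic variation is controlled by $\varepsilon\int_0^T\int_Z\|Z^\varepsilon(s-)\|_H^2\|f\|_H^2\,\theta(dz)\,ds$, hence by $\varepsilon\sup_{s\le T}\|Z^\varepsilon(s)\|_H^2\cdot\int_0^T\int_Z l_2^2\,\theta(dz)ds$ up to lower-order terms — and absorb the resulting $\tfrac12\mathbb{E}\sup_{s\le T}\|Z^\varepsilon(s)\|_H^2$ into the left-hand side exactly as in the chain \eref{l4.1e1}--\eref{l4.1e2}; then Gronwall again. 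The main obstacle is purely bookkeeping: one must make sure that every appearance of $\mathbb{W}_{2,H}(\mathcal{L}_{X^\varepsilon(s)},\mathcal{L}_{X^0(s)})^2$ is correctly identified with $\mathbb{E}\|Z^\varepsilon(s)\|_H^2$ (not with a pathwise quantity), and that the jump-correction and BDG terms are handled with the localization in force so that all expectations are finite before the limit $R\to\infty$ is taken; there is no genuinely hard analytic step beyond what is already done in Lemma \ref{YEE}.
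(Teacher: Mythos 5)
Your proposal is correct and follows essentially the same route as the paper: It\^o's formula for $\|X^\varepsilon-X^0\|_H^2$, hypothesis $(H3)$ with the Wasserstein term identified as $\mathbb{E}\|X^\varepsilon(s)-X^0(s)\|_H^2$ (since $\mathcal{L}_{X^0(s)}$ is a Dirac mass), the moment bound of Lemma \ref{YEE} together with $(H5)(ii)$ for the jump-correction term, BDG plus Young to absorb the supremum of the martingale part, and Gronwall. The only differences (the preliminary $\sup_t\mathbb{E}$ step and the explicit stopping-time localization) are harmless refinements of the paper's direct $\mathbb{E}\sup_t$ argument.
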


\begin{proof}
For simplicity of notations, we denote $Z^{\varepsilon}(t):=X^{\varepsilon}(t)-X^0(t)$ which solves the following SPDE
$$
\left\{\begin{array}{l}
d Z^{\varepsilon}(t)=[A(t, X^{\varepsilon}(t), \mathcal{L}_{X^{\varepsilon}(s)})-A(t, X^0(t), \mathcal{L}_{X^{0}(t)})] d t\\
~~~~~~~~~~~+\varepsilon  \int_Z f(t, X^{\varepsilon}(t-), \mathcal{L}_{X^{\varepsilon}(t)}, z) \widetilde{N}^{\varepsilon^{-1}}(d t, d z), \\
Z^{\varepsilon}(0)=0 .
\end{array}\right.
$$
 Applying It\^{o}'s formula yields that
 \begin{align}\label{l4.2e1}
\|Z^{\varepsilon}(t)\|_H^2= & 2 \int_0^t \,_ {V^*}\langle A(s, X^{\varepsilon}(s), \mathcal{L}_{X^{\varepsilon}(s)})-A(s, X^0(s), \mathcal{L}_{X^{0}(s)}), Z^{\varepsilon}(s)\rangle_V d s \nonumber\\
& +2 \varepsilon \int_0^t \int_Z \langle f(s, X^{\varepsilon}(s-), \mathcal{L}_{X^{\varepsilon}(s)}, z), Z^{\varepsilon}(s) \rangle_H \widetilde{N}^{\varepsilon^{-1}}(d s, d z)\nonumber\\
& +\varepsilon^2 \int_0^t\int_Z  \|f(s, X^{\varepsilon}(s-), \mathcal{L}_{X^{\varepsilon}(s)}, z)\|_H^2 N^{\varepsilon^{-1}}(d s, d z)\nonumber\\
=: & \sum_{i=1}^3 I_i(t) .
\end{align}
Note that $\mathbb{W}_{2, H}\left(\mathcal{L}_{X^{\varepsilon}(s)}, \mathcal{L}_{X^{0}(s)}\right)^2 \leq \mathbb{E}\left\|X^{\varepsilon}(t)-X^0(t)\right\|_H^2$, in terms of the $(H3)$,  it follows that
\begin{align}\label{l4.2e2}
\mathbb{E} \Bigg[\sup_{t \in [0,T]} I_1(t) \Bigg]
&  \leq C \mathbb{E} \int_0^T\left(\|Z^{\varepsilon}(t)\|_H^2+\mathbb{W}_{2, H}(\mathcal{L}_{X^{\varepsilon}(s)}, \mathcal{L}_{X^{0}(s)})^2\right) d t\nonumber\\
& \leq C \int_0^T \mathbb{E}\|Z^{\varepsilon}(t)\|_H^2 d t  .
\end{align}
According to $(H5)(ii)$ and Lemma \ref{YEE}, the term $I_3(t)$ will be estimated as follows
\begin{align}\label{l4.2e3}
\mathbb{E} \Bigg[\sup_{t \in [0,T]} I_3(t) \Bigg]
& \textcolor{red} {\leq} \varepsilon \mathbb{E}\Bigg( \int_0^t\int_Z  \|f(s,X^{\varepsilon}(s), \mathcal{L}_{X^{\varepsilon}(s)}, z)\|_H^2 \theta (dz) ds\Bigg)\nonumber\\
& \leq C \varepsilon \mathbb{E}\Bigg( \int_0^t\int_Z  l^2_2(s,z) (\|X^{\varepsilon}(s)\|_H^2+\mathcal{L}_{X^{\varepsilon}(s)}(\|\cdot\|_H^2)+1) \theta (dz) ds\Bigg)\nonumber\\
& \leq C \varepsilon \mathbb{E}\Bigg[\bigg(\sup _{t \in[0, T]}\|X^\varepsilon(t)\|_H^2+1\bigg)\int_0^T \int_Z l^2_2(s,z) \theta (dz) ds \Bigg]\nonumber\\
& \leq C \varepsilon \mathbb{E}\Bigg(\sup _{t \in[0, T]}\|X^\varepsilon(t)\|_H^2+1\Bigg)\nonumber\\
& \leq \varepsilon C_{T,\|x\|_H}.
\end{align}
For the term $I_2(t)$, applying BDG's inequality and Young's inequality leads to
\begin{align}\label{l4.2e4}
& \mathbb{E} \Bigg[\sup_{t \in [0,T]} I_2(t) \Bigg]\nonumber\\
\leq & C \varepsilon \mathbb{E}\Bigg( \int_0^t\int_Z  \|f(s, X^{\varepsilon}(s-), \mathcal{L}_{X^{\varepsilon}(s)}, z)\|_H^2 \|Z^{\varepsilon}(s-)\|_H^2 N^{\varepsilon^{-1}}(d s, d z)\Bigg)^{\frac{1}{2}}\nonumber\\
\leq & \frac{1}{4} \mathbb{E} \Bigg[\sup_{t \in [0,T]} \|Z^{\varepsilon}(t)\|_H^2 \Bigg] +  C \varepsilon \mathbb{E}\Bigg( \int_0^t\int_Z  \|f(s,X^{\varepsilon}(s), \mathcal{L}_{X^{\varepsilon}(s)}, z)\|_H^2 \theta (dz) ds\Bigg) \nonumber\\
\leq & \frac{1}{4} \mathbb{E} \Bigg[\sup_{t \in [0,T]} \|Z^{\varepsilon}(t)\|_H^2 \Bigg]  + C \varepsilon \mathbb{E}\Bigg(\sup _{t \in[0, T]}\|X^\varepsilon(t)\|_H^2+1\Bigg)\nonumber\\
\leq & \frac{1}{4} \mathbb{E} \Bigg[\sup_{t \in [0,T]} \|Z^{\varepsilon}(t)\|_H^2 \Bigg] + \varepsilon C_{T,\|x\|_H}.
\end{align}
Thus combining (\ref{l4.2e2})-(\ref{l4.2e4}) with (\ref{l4.2e1}) and using Gronwall's inequality, we have
$$
\mathbb{E}\Bigg[\sup _{t \in[0, T]}\|Z_t^{\varepsilon}\|_H^2\Bigg] \leq C_{T,\|x\|_H},
$$
which completes the proof of Lemma \ref{l4.2}.
\hspace{\fill}$\Box$
\end{proof}

\begin{lemma}\label{lem4.2}
For any $\chi \in \mathcal{H}^{\varpi} \cap L_2(\theta_T)$, there exists a constant $C_{\chi,N}$ such that
\begin{equation}
C_{\chi,N}:= \sup_{g \in S^N} \int_0^T \int _{Z} \chi(s,z) |g(s,z)-1| \theta(dz) d s  < \infty.\label{e404}
\end{equation}
\end{lemma}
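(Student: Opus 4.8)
\emph{Proof proposal.} The plan is to split $Z_T$ according to whether $g$ is close to $1$ or large, controlling the two regimes by different elementary properties of $l$. First I would record three facts about $l(r)=r\log r-r+1$. (1) The Legendre transform of $l$ is $l^*(p)=e^p-1$, so Young's inequality gives, for all $x\ge0$, $y\in\RR$, $\sigma>0$,
\[
xy\le \frac1\sigma\,l(x)+\frac1\sigma\bigl(e^{\sigma y}-1\bigr).
\]
(2) Since $l(1)=l'(1)=0$ and $l''(1)=1$, a short computation yields a constant $c_1>0$ with $|x-1|\le c_1\sqrt{l(x)}$ for all $x\in[0,2]$. (3) $l$ is increasing on $[1,\infty)$, so $l(x)\ge l(2)>0$ for $x\ge2$; and since $\chi\in L_2(\theta_T)$, Chebyshev's inequality gives $\theta_T(\{\chi>1\})\le\int_{Z_T}\chi^2\,\d\theta_T<\infty$, whence $\int_{\{\chi>1\}}e^{\varpi\chi}\,\d\theta_T<\infty$ by the definition of $\mathcal H^{\varpi}$.

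Fix $N$ and $g\in S^N$, and set $B_g:=\{(s,z)\in Z_T:\ g(s,z)>2\}$. From fact (3), $\theta_T(B_g)\le Q(g)/l(2)\le N/l(2)$, uniformly in $g$. On $B_g^c$, which takes values in $[0,2]$, I would combine fact (2) with the Cauchy--Schwarz inequality:
\[
\int_{B_g^c}\chi\,|g-1|\,\d\theta_T\le c_1\int_{Z_T}\chi\,\sqrt{l(g)}\,\d\theta_T\le c_1\Bigl(\int_{Z_T}\chi^2\,\d\theta_T\Bigr)^{1/2}Q(g)^{1/2}\le c_1\Bigl(\int_{Z_T}\chi^2\,\d\theta_T\Bigr)^{1/2}\sqrt N .
\]
On $B_g$, I would bound $|g-1|\le g$, apply Young's inequality with $\sigma=\varpi$, and split the exponential term over $\{\chi\le1\}$ and $\{\chi>1\}$:
\[
\int_{B_g}\chi\,|g-1|\,\d\theta_T\le \frac1\varpi\int_{B_g}l(g)\,\d\theta_T+\frac1\varpi\int_{B_g}e^{\varpi\chi}\,\d\theta_T\le \frac{N}{\varpi}+\frac{e^{\varpi}}{\varpi}\,\theta_T(B_g)+\frac1\varpi\int_{\{\chi>1\}}e^{\varpi\chi}\,\d\theta_T ,
\]
and all three terms on the right are finite and independent of $g$ by facts (1) and (3). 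Adding the two bounds and taking the supremum over $g\in S^N$ gives $C_{\chi,N}<\infty$.

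The genuinely delicate point is the regime where $g$ is close to $1$: there $|g-1|$ is small, but the set on which this happens may carry infinite $\theta_T$-mass, so the crude estimate $\chi|g-1|\le\chi$ is useless because $\chi\notin L^1(\theta_T)$ in general; the remedy is precisely the quadratic control $l(x)\ge c_1^{-2}(x-1)^2$ near $1$, used together with $\chi\in L_2(\theta_T)$ through Cauchy--Schwarz. A secondary subtlety is that $\int_{Z_T}e^{\varpi\chi}\,\d\theta_T$ need not be finite, so the $\mathcal H^{\varpi}$ property cannot be invoked globally: it is used only on the fixed finite-measure set $\{\chi>1\}$, while on the remainder of $B_g$ one exploits the uniform (over $S^N$) bound on $\theta_T(B_g)$.
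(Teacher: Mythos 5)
Your proof is correct. The paper itself does not prove this lemma but instead cites \cite[Lemma 3.4]{BCD} and \cite[Lemma 4.1]{XZ}; your argument reconstructs essentially the same standard decomposition used there, namely splitting $Z_T$ according to $g\le 2$ versus $g>2$ (note $\{g\le 2\}=\{|g-1|\le 1\}$), controlling the first region via the quadratic lower bound $l(x)\gtrsim (x-1)^2$ together with Cauchy--Schwarz and $\chi\in L_2(\theta_T)$, and controlling the second via the Legendre--Young inequality $xy\le\frac1\sigma l(x)+\frac1\sigma(e^{\sigma y}-1)$ (the form of which is also recalled as (\ref{e522}) in the paper, quoting \cite[Remark 3.3]{BCD}), the uniform Markov bound $\theta_T(\{g>2\})\le N/l(2)$, and the $\mathcal H^{\varpi}$ integrability restricted to the finite-measure set $\{\chi>1\}$.
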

The proof of lemma \ref{lem4.2} can refer to \cite[Lemma 3.4]{BCD} or \cite[Lemma 4.1]{XZ}.

\begin{lemma}\label{SEE}
Suppose that $(H1)$-$(H5)$ hold. Then, for each $x \in H$ and $g\in S^N,$ there is a unique solution to (\ref{SE}) and exists a constant $C_{l_2,N,T} \in (0,\infty)$ such that
\begin{equation}\label{e403}
\sup_{t \in [0,T]}\|X^g(t)\|_H^2 + 2\delta\int_0^T\|X^g(t)\|_V^\alpha dt \leq C_{l_2,N,T}(\|x\|_H^2 + 1).
\end{equation}
\end{lemma}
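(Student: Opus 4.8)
The plan is to regard the skeleton equation (\ref{SE}) as a \emph{deterministic} evolution equation in the Gelfand triple $V\subset H\subset V^*$,
$$\dot X^g(t)=A(t,X^g(t),\mathcal L_{X^0(t)})+B^g(t,X^g(t)),\qquad X^g(0)=x,$$
where $B^g(t,x):=\int_Z f(t,x,\mathcal L_{X^0(t)},z)(g(t,z)-1)\theta(dz)$. The crucial structural point is that the measure argument here is the \emph{fixed} deterministic flow $\mathcal L_{X^0(t)}=\delta_{X^0(t)}$ coming from (\ref{X0}), so the equation is not self-referential in the distribution; only the lower-order term $B^g$ is genuinely new compared with the classical monotone framework, and by $(H5)$ it is Lipschitz in $H$. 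Precisely, setting $\ell_i^g(t):=\int_Z l_i(t,z)\,|g(t,z)-1|\,\theta(dz)$, assumption $(H5)$ (after extending $f(t,\cdot,\mu,z)$ from $V$ to an $l_1(t,z)$-Lipschitz map on $H$ by density, which is harmless since only $\|\cdot\|_H$ and $\mathbb W_{2,H}$ enter) gives
$$\|B^g(t,x)-B^g(t,y)\|_H\le \ell_1^g(t)\|x-y\|_H,\qquad \|B^g(t,x)\|_H\le \ell_2^g(t)\big(\|x\|_H+\|X^0(t)\|_H+1\big),$$
and since $l_1\in\mathcal H^\infty\cap L_2(\theta_T)$ and $l_2\in\mathcal H_2\cap L_2(\theta_T)\subset\mathcal H^\infty\cap L_2(\theta_T)$, Lemma \ref{lem4.2} yields $\sup_{g\in S^N}\int_0^T\ell_i^g(t)\,dt=:C_{l_i,N}<\infty$, so $\ell_1^g,\ell_2^g\in L^1([0,T])$ uniformly over $S^N$. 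Consequently the operator $A^g(t,x):=A(t,x,\delta_{X^0(t)})+B^g(t,x)$ inherits from $(H1)$--$(H4)$ hemicontinuity, the monotonicity $2\,_{V^*}\langle A^g(t,x)-A^g(t,y),x-y\rangle_V\le (c+2\ell_1^g(t))\|x-y\|_H^2$, and the coercivity $2\,_{V^*}\langle A^g(t,x),x\rangle_V\le (c+C\ell_2^g(t))\|x\|_H^2-\delta\|x\|_V^\alpha+\psi(t)$ with $\psi\in L^1([0,T])$ (using $\sup_{[0,T]}\|X^0(t)\|_H<\infty$), all with $L^1$-in-time coefficients.

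For existence and uniqueness I would isolate the only non-monotone ingredient. Given $Y\in C([0,T];H)$, the equation $\dot X(t)=A(t,X(t),\delta_{X^0(t)})+B^g(t,Y(t))$, $X(0)=x$, has a fixed source term $B^g(\cdot,Y(\cdot))\in L^1([0,T];H)$ that does not depend on $X$; this is a monotone, coercive evolution equation, solved uniquely by a standard Galerkin approximation and monotonicity (Minty) argument (cf.\ \cite{G82,Liu11,LR}), and \emph{no} compactness of $V\hookrightarrow H$ is needed since the source is merely additive. Denote its solution by $X=:\Psi(Y)\in C([0,T];H)\cap L^\alpha([0,T];V)$. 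Using $(H3)$ with the two measures equal (so the Wasserstein term drops out) and the $H$-Lipschitz bound on $B^g$, for $X_i=\Psi(Y_i)$ the chain rule for $\|\cdot\|_H^2$ gives
$$\|X_1(t)-X_2(t)\|_H^2\le\int_0^t\big(c+\ell_1^g(s)\big)\|X_1(s)-X_2(s)\|_H^2\,ds+\int_0^t\ell_1^g(s)\|Y_1(s)-Y_2(s)\|_H^2\,ds;$$
introducing the weighted metric $d_\lambda(Y_1,Y_2):=\sup_{t\in[0,T]}e^{-\lambda\Lambda^g(t)}\|Y_1(t)-Y_2(t)\|_H$ with $\Lambda^g(t):=\int_0^t(c+\ell_1^g(s))\,ds$ and applying Gronwall's inequality, $\Psi$ becomes a strict contraction on $(C([0,T];H),d_\lambda)$ for $\lambda$ large, hence has a unique fixed point $X^g$, which is the unique solution of (\ref{SE}) in the required class; uniqueness of the solution of (\ref{SE}) itself follows directly from the same Gronwall estimate applied to the difference of any two solutions.

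The a priori bound (\ref{e403}) then follows by applying the chain rule for $\|\cdot\|_H^2$ in the variational framework to $X^g$ itself: by $(H2)$, together with $\mathcal L_{X^0(t)}(\|\cdot\|_H^2)=\|X^0(t)\|_H^2\le C_T(\|x\|_H^2+1)$ (the standard a priori estimate for (\ref{X0})), and estimating $2\langle B^g(t,X^g(t)),X^g(t)\rangle_H\le C\ell_2^g(t)\big(1+\|X^g(t)\|_H^2+\|x\|_H^2\big)$ via $(H5)(ii)$ and Young's inequality, one obtains
$$\|X^g(t)\|_H^2+\delta\int_0^t\|X^g(s)\|_V^\alpha\,ds\le C_T(\|x\|_H^2+1)+\int_0^t\big(c+C\ell_2^g(s)\big)\big(1+\|X^g(s)\|_H^2\big)\,ds;$$
since $\int_0^T(c+C\ell_2^g(s))\,ds\le cT+C\,C_{l_2,N}<\infty$ uniformly over $g\in S^N$, Gronwall's lemma gives (\ref{e403}) with a constant $C_{l_2,N,T}$ depending only on the stated parameters. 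The step that really requires care is existence: because $A$ is only monotone (not Lipschitz) one cannot run a Picard iteration on the full equation, while—in order to later cover bounded and unbounded domains—the compactness of $V\hookrightarrow H$ is deliberately not assumed, so the usual Aubin--Lions compactness is unavailable. The device above sidesteps both issues by solving the monotone part exactly for a frozen $L^1([0,T];H)$ source (needing only Galerkin plus Minty's monotonicity trick, no compactness) and iterating only over the genuinely Lipschitz perturbation $B^g$; everything else is a routine application of $(H1)$--$(H5)$, the chain rule, Lemma \ref{lem4.2}, and Gronwall's inequality.
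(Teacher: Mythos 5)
Your argument is correct in substance, and on the a priori bound it is essentially the paper's proof: chain rule for $\|\cdot\|_H^2$, coercivity $(H2)$, the growth bound $(H5)(ii)$ with $\mathcal L_{X^0(t)}(\|\cdot\|_H^2)=\|X^0(t)\|_H^2\le C_T(\|x\|_H^2+1)$, the uniform-in-$S^N$ bound of Lemma \ref{lem4.2}, and Gronwall — the only cosmetic difference being that you keep the time-dependent $L^1$ weight $\ell_2^g$ inside Gronwall, whereas the paper splits off $\sup_t\|X^g(t)\|_H$ by Young's inequality before applying Gronwall. Where you genuinely diverge is the well-posedness part: the paper simply observes that, since the measure argument is the frozen flow $\mathcal L_{X^0(t)}$, (\ref{SE}) is a distribution-independent equation and then cites \cite[Theorem 1.2]{BLZ} or \cite[Proposition 5.1]{XZ}, while you give a self-contained construction (freeze the Lipschitz lower-order term $B^g(\cdot,Y(\cdot))$, solve the monotone coercive part by Galerkin--Minty, and contract in a weighted metric using $\ell_1^g\in L^1([0,T])$), which is closer in spirit to the fixed-point scheme the paper uses for Theorem \ref{THY}. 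Your route buys independence from the external references and makes explicit why no compactness of $V\hookrightarrow H$ is needed; its cost is one technical gloss you should not hide behind ``standard'': the frozen source is only in $L^1([0,T];H)$ (you control $\ell_2^g$ merely in $L^1$, and this cannot be upgraded for general $g\in S^N$), so the classical existence theorems and the chain-rule lemma in \cite{G82,Liu11,LR}, whose growth hypotheses require the drift's $V^*$-norm to be $L^{\alpha/(\alpha-1)}$-integrable in time, do not apply verbatim; one needs their (routine, but not literally cited) extension to drifts decomposed as an $L^{\alpha/(\alpha-1)}([0,T];V^*)$ part plus an $L^1([0,T];H)$ part — which is precisely the situation handled by the results the paper invokes. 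With that extension spelled out (or with a direct appeal to \cite{BLZ,XZ} as the paper does), your proof is complete.
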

\begin{proof}
Because the distribution of solutions to the skeleton equation (\ref{SE}) is fixed, we can treat it as a distribution independent equation, and the proof of the well-posedness part can refer in \cite[Theorem 1.2]{BLZ} or \cite[Proposition 5.1]{XZ}. Thus we only need to prove (\ref{e403}).

Recall the skeleton equation
\begin{equation*}
X^g(t)=x+\int_0^tA(s,X^g(s),\mathcal{L}_{X^0(s)})dt
+\int_0^t\int_{Z}f(s,X^g(s),\mathcal{L}_{X^0(s)},z)(g(s,z)-1)\theta(dz)ds,
\end{equation*}
Applying the chain rule to $\|X^g (t)\|_H^2$, we have
\begin{align*}
\|X^g(t)\|_H^2 = & \|x\|_H^2 + 2\int _0^t \, _{V^*} \langle A(s,X^g(s),\mathcal{L}_{X^0(s)}), X^g(s) \rangle _V d s   \nonumber\\
&+ 2 \int _0^t \int_Z \langle f(s,X^g(s),\mathcal{L}_{X^0(s)},z)(g(s,z)-1), X^g(s) \rangle _H \theta(d z) d s .
\end{align*}
According to $(H2)$, we have
\begin{align}\label{e414}
& \sup_{s \in [0,T]} \|X^g(s)\|_H^2 + \delta \int _0^T  \|X^g(s)\|_V^{\alpha} d s \nonumber\\
\leq & \|x\|_H^2 + C \int _0^T  \|X^g(s)\|_H^2 d s + CT \nonumber\\
& +2 \int_0^T \int _Z | \langle f(s,X^g(s),\mathcal{L}_{X^0(s)},z)(g(s,z)-1), X^g(s) \rangle_H |
\theta (dz) d s .
\end{align}
By Young's inequality, (\ref{e404}) and Cauchy-Schwarz's inequality, we get
\begin{align}\label{e415}
& 2 \int_0^T \int _Z | \langle f(s,X^g(s),\mathcal{L}_{X^0(s)},z)(g(s,z)-1), X^g(s) \rangle_H | \theta (dz) d s \nonumber\\
\leq & \sup_{s \in [0,T]} \|X^g(s)\|_H  \int_0^T \int _Z \|f(s,X^g(s),\mathcal{L}_{X^0(s)},z)\|_H |g(s,z)-1| \theta (dz) d s \nonumber\\
\leq & \frac{1}{2} \sup_{s \in [0,T]} \|X^g(s)\|_H^2 + 2\bigg(\int _0^T \int _Z \|f(s,X^g(s),\mathcal{L}_{X^0(s)},z)\|_H |g(s,z)-1| \theta (dz) d s\bigg)^2 \nonumber\\
\leq & \frac{1}{2} \sup_{s \in [0,T]} \|X^g(s)\|_H^2 \nonumber\\
 & + 2\bigg(\int _0^T \int _Z l_2(s,z)( \|X^g(s)\|_H  + \mathcal{L}_{X^0(s)}(\|\cdot\|_H^2)^ \frac{1}{2} + 1) |g(s,z)-1| \theta (dz) d s\bigg)^2 \nonumber\\
\leq & \frac{1}{2} \sup_{s \in [0,T]} \|X^g(s)\|_H^2 + 2 C_{l_2,N}\bigg(\int _0^T ( \|X^g(s)\|_H + \|X^0(s)\|_H + 1)   d s \bigg)^2 \nonumber\\
\leq & \frac{1}{2} \sup_{s \in [0,T]} \|X^g(s)\|_H^2 + 2 C_{l_2,N} \cdot T \int _0^T( \|X^g(s)\|_H + \|X^0(s)\|_H + 1)^2   d s \nonumber\\
\leq & \frac{1}{2} \sup_{s \in [0,T]} \|X^g(s)\|_H^2 +  C_{l_2,N,T} \int _0^T\|X^g(s)\|_H^2  d s + C_{l_2,N,T} + C_{l_2,N,T} \sup_{s \in [0,T]}\|X^0(s)\|_H^2.
\end{align}
By (\ref{e414}), (\ref{e415}) and Gronwall's lemma, we can obtain
$$
\sup_{t \in [0,T]}\|X^g(t)\|_H^2 + 2\delta\int_0^T\|X^g(t)\|_V^\alpha dt \leq C_{l_2,N,T}(\|x\|_H^2 + 1).
$$
This completes the proof of Lemma \ref{SEE}.    \hspace{\fill}$\Box$
\end{proof}

\begin{lemma}\label{L3}
There exists $\varepsilon_2 >0$ and a constant $C_{l_2,2,2, N, T,\|x\|_H}>0$ which is independent of $\varepsilon$ such that
\begin{equation}\label{e550}
\sup _{\varepsilon \in(0, \varepsilon_2]} \mathbb{E}\bigg(\sup _{t \in[0, T]}\|X^{\varphi_\varepsilon}(t)\|_H^2\bigg)+\mathbb{E}\bigg(\int_0^T\|X^{\varphi_\varepsilon}(t)\|_V^\alpha d t\bigg) \leq C_{l_2,2,2, N, T,\|x\|_H},
\end{equation}
where $X^{\varphi_\varepsilon}(t)$ satisfies the following stochastic controlled equation
\begin{align*}
X^{\varphi_\varepsilon}(t)=&x+\int_0^t A(s,X^{\varphi_\varepsilon}(s),\mathcal{L}_{X^{\varepsilon}(s)})dt
\nonumber\\
&+\int_0^t\int_{Z}f(s,X^{\varphi_\varepsilon}(s),\mathcal{L}_{X^{\varepsilon}(s)},z)({\varphi_\varepsilon}(s,z)-1)\theta(dz)ds\nonumber\\
&+\varepsilon\int_0^t\int_{Z}f(s,X^{\varphi_\varepsilon}(s-),\mathcal{L}_{X^{\varepsilon}(s)},z)\widetilde{N} ^{\varepsilon ^{-1}\varphi_\varepsilon}(dz,ds).
\end{align*}
\end{lemma}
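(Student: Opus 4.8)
The plan is to apply It\^{o}'s formula to $\|X^{\varphi_\varepsilon}(t)\|_H^2$ and then estimate the four resulting terms: the drift term involving $A$, the controlled-drift term involving $f$ and $(\varphi_\varepsilon-1)$, the martingale term driven by $\widetilde{N}^{\varepsilon^{-1}\varphi_\varepsilon}$, and the "It\^{o} correction" jump term. For the drift term I would use the coercivity assumption $(H2)$ to extract the $-\delta\|X^{\varphi_\varepsilon}(s)\|_V^\alpha$ contribution and control the remainder by $c(\|X^{\varphi_\varepsilon}(s)\|_H^2 + \mathcal{L}_{X^\varepsilon(s)}(\|\cdot\|_H^2) + 1)$; here the distribution-dependent term is harmless because by Lemma \ref{YEE} applied to $X^\varepsilon$ we have $\sup_{s\in[0,T]}\mathcal{L}_{X^\varepsilon(s)}(\|\cdot\|_H^2) = \sup_s \mathbb{E}\|X^\varepsilon(s)\|_H^2 \leq C e^{CT}(\|x\|_H^2+1)$, a constant independent of $\varepsilon$, which gets absorbed into $C_{l_2,2,2,N,T,\|x\|_H}$.

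For the controlled-drift term I would bound $|\langle f(s,X^{\varphi_\varepsilon}(s),\mathcal{L}_{X^\varepsilon(s)},z)(\varphi_\varepsilon(s,z)-1), X^{\varphi_\varepsilon}(s)\rangle_H|$ using $(H5)(ii)$ and then, exactly as in the proof of Lemma \ref{SEE}, split off $\frac14\sup_{s\in[0,T]}\|X^{\varphi_\varepsilon}(s)\|_H^2$ by Young's inequality and use Lemma \ref{lem4.2} (with $\chi = l_2 \in \mathcal{H}_2\cap L_2(\theta_T)$, giving the finite constant $C_{l_2,N}:=\sup_{g\in S^N}\int_0^T\int_Z l_2(s,z)|g(s,z)-1|\theta(dz)ds$) together with Cauchy--Schwarz to convert the time integral into $C_{l_2,N,T}\int_0^T\|X^{\varphi_\varepsilon}(s)\|_H^2\,ds + C_{l_2,N,T}(1 + \sup_s\mathbb{E}\|X^\varepsilon(s)\|_H^2)$. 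For the stochastic integral $I_2$ I would take $\sup_{t}$, apply the Burkholder--Davis--Gundy inequality for the compensated Poisson integral, and then Young's inequality to split off another $\frac14\mathbb{E}\sup_{t\in[0,T]}\|X^{\varphi_\varepsilon}(t)\|_H^2$ while the remaining factor $\mathbb{E}\int_0^T\int_Z \|f\|_H^2\varphi_\varepsilon(s,z)\theta(dz)ds$ is controlled via $(H5)(ii)$, $\varphi_\varepsilon\in S^N$, and Lemma \ref{lem4.2}. The jump-correction term $I_3$, using identity $(\ref{e703})$, equals $\varepsilon^2\mathbb{E}\int_0^T\int_Z\|f(s,X^{\varphi_\varepsilon}(s),\mathcal{L}_{X^\varepsilon(s)},z)\|_H^2 \varphi_\varepsilon(s,z)\theta(dz)ds$, which I again estimate by $(H5)(ii)$ and $\varphi_\varepsilon \le n$ (or by combining the $\mathcal{H}^\varpi$ bound on $l_2^2$ with $Q(\varphi_\varepsilon)\le N$); crucially this term carries a prefactor $\varepsilon$ (or $\varepsilon^2$), so it stays bounded uniformly for $\varepsilon \in (0,\varepsilon_2]$ for a suitable $\varepsilon_2>0$.

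Collecting the four estimates, taking expectation and then $\sup_{t\in[0,T']}$ for $T'\le T$, the two $\frac14\sup$ terms absorb into the left side, leaving
\[
\mathbb{E}\Big[\sup_{t\in[0,T']}\|X^{\varphi_\varepsilon}(t)\|_H^2\Big] + 2\delta\,\mathbb{E}\int_0^{T'}\|X^{\varphi_\varepsilon}(s)\|_V^\alpha ds \le C + C\int_0^{T'}\mathbb{E}\Big[\sup_{r\in[0,s]}\|X^{\varphi_\varepsilon}(r)\|_H^2\Big]ds,
\]
with $C = C_{l_2,2,2,N,T,\|x\|_H}$ independent of $\varepsilon$; Gronwall's lemma then yields $(\ref{e550})$. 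The only subtlety worth flagging is the need to first run the argument with a localizing stopping time $\tau_R := \inf\{t:\|X^{\varphi_\varepsilon}(t)\|_H > R\}$ to guarantee the stochastic integral is a genuine martingale and the right-hand side is finite before Gronwall, then let $R\to\infty$; this is routine given Lemma \ref{YEE}.

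\textbf{Main obstacle.} The main difficulty is keeping all constants genuinely independent of $\varepsilon$: the distribution argument $\mathcal{L}_{X^\varepsilon(s)}$ appearing in the coefficients must be controlled by the uniform-in-$\varepsilon$ moment bound from Lemma \ref{YEE}, and the control term $\int_Z \|f\|_H^2\varphi_\varepsilon\,\theta(dz)ds$ must be handled by separating the "$\varphi_\varepsilon - 1$" part (bounded via Lemma \ref{lem4.2} using $l_2 \in \mathcal{H}_2$) from the "$+1$" part (bounded via $l_2 \in L_2(\theta_T)$), since $\varphi_\varepsilon$ itself need not be in $L_2(\theta_T)$ uniformly.
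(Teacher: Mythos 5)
Your proposal is correct and follows essentially the same route as the paper: It\^{o}'s formula for $\|X^{\varphi_\varepsilon}(t)\|_H^2$, coercivity $(H2)$ for the $A$-drift, the uniform-in-$\varepsilon$ moment bound from Lemma \ref{YEE} to tame the distribution arguments $\mathcal{L}_{X^\varepsilon(s)}$, BCD-type control estimates to handle the factors $|\varphi_\varepsilon-1|$ and $\varphi_\varepsilon$, a BDG/Young split to absorb the martingale term, the extra $\varepsilon$-prefactor on the jump-correction and martingale terms to fix $\varepsilon_2$, and finally Gronwall. Two small technical remarks. First, after taking expectation against the intensity $\varepsilon^{-1}\varphi_\varepsilon\theta_T$, the It\^{o} correction term carries a net prefactor $\varepsilon$ (not $\varepsilon^2$): $\varepsilon^2 \cdot \varepsilon^{-1} = \varepsilon$; you hedged correctly, but the $\varepsilon$ power matters because that term also multiplies $\mathbb{E}\sup_t\|X^{\varphi_\varepsilon}(t)\|_H^2$ and has to be absorbed by choosing $\varepsilon_2$ small. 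Second, to bound $\int_0^T\int_Z l_2^2(s,z)\varphi_\varepsilon(s,z)\,\theta(dz)ds$, the paper invokes the dedicated estimate $(\ref{e554})$, namely $\sup_{g\in S^N}\int_0^T\int_Z h^2(g+1)\theta(dz)ds<\infty$ for $h\in\mathcal{H}_2\cap L_2(\theta_T)$, rather than Lemma \ref{lem4.2} with $\chi=l_2^2$. Your proposed split $\varphi_\varepsilon=(\varphi_\varepsilon-1)+1$ recovers the same content, but if you literally cite Lemma \ref{lem4.2} with $\chi=l_2^2$ you would formally need $l_2^2\in L_2(\theta_T)$, which is not assumed; what the argument actually uses is $l_2^2\in\mathcal{H}^\delta$ (from $l_2\in\mathcal{H}_2$) for the $\varphi_\varepsilon$-part and $l_2^2\in L_1(\theta_T)$ (from $l_2\in L_2(\theta_T)$) for the constant part, which is precisely $(\ref{e554})$.
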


\begin{proof}
By It\^{o}'s formula, we have
\begin{equation}\label{e551}
\left\|X^{\varphi_\varepsilon}(t)\right\|_H^2=\|x\|_H^2+\hat{I_1}(t)+\hat{I_2}(t)+\hat{I_3}(t)+\hat{I_4}(t),
\end{equation}
where
\begin{align*}
& \hat{I_1}(t)=2 \int_0^t \, _{V^*}\langle A(s,X^{\varphi_\varepsilon}(s),\mathcal{L}_{{X^\varepsilon}(s)}), X^{\varphi_\varepsilon}(s)\rangle_{V} d s ,\nonumber\\
& \hat{I_2}(t)=2 \int_0^t\int_Z\left\langle f(s, X^{\varphi_\varepsilon}(s),\mathcal{L}_{{X^\varepsilon}(s)}, z)\left(\varphi_{\varepsilon}(s, z)-1\right), X^{\varphi_\varepsilon}(s)\right\rangle_{H} \theta(d z) d s , \nonumber\\
& \hat{I_3}(t)=\int_0^t \int_Z\left[\|X^{\varphi_\varepsilon}(s-)+\varepsilon f(s, X^{\varphi_\varepsilon}(s-),\mathcal{L}_{{X^\varepsilon}(s)}, z)\|_H^2-\|X^{\varphi_\varepsilon}(s-)\|_H^2\right. \nonumber\\
&~~~~~~~~~~~~~~~~~~~~ \left.-2\left\langle\varepsilon f(s, X^{\varphi_\varepsilon}(s-),\mathcal{L}_{{X^\varepsilon}(s)}, z), X^{\varphi_\varepsilon}(s-)\right\rangle_{H}\right] N^{\varepsilon^{-1} \varphi_{\varepsilon}}(d z, d s), \nonumber\\
& \hat{I_4}(t)=2\varepsilon \int_0^t \int_Z \left\langle\varepsilon f(s, X^{\varphi_\varepsilon}(s-),\mathcal{L}_{{X^\varepsilon}(s)}, z), X^{\varphi_\varepsilon}(s-)\right\rangle_{H} \widetilde{N}^{\varepsilon^{-1}\varphi_{\varepsilon}} (d z, d s).
\end{align*}

By $(H2)$ and Lemma \ref{YEE} we can get
\begin{align}\label{e552}
\mathbb{E} \Bigg[\sup _{t \in [0,T]}\hat{I_1}(t) \Bigg] \leq &  \mathbb{E} \int_0^T \Big( C \big(\| X^{\varphi_\varepsilon}(s)\|_H^2 + \mathcal{L}_{X^{\varepsilon}(s)}(\|\cdot\|_H^2) +1\big) -\delta \|X^{\varphi_\varepsilon}(s)\|_V^\alpha \Big) d s \nonumber\\
\leq & -\delta \mathbb{E} \int_0^T\|X^{\varphi_\varepsilon}(s)\|_V^\alpha ds + C \mathbb{E} \int_0^T \| X^{\varphi_\varepsilon}(s)\|_H^2ds + C_{T,\|x\|_H} ,
\end{align}
and by $(H5)(ii)$
\begin{align}\label{e553}
& \mathbb{E} \Bigg[\sup _{t \in [0,T]} \hat{I_2}(t)\Bigg] \nonumber\\
\leq & 2 \mathbb{E} \int_0^T\int_{Z}\|f(s, X^{\varphi_\varepsilon}(s),\mathcal{L}_{{X^\varepsilon}(s)}, z)\|_H |\varphi_{\varepsilon}(s, z)-1| \|X^{\varphi_\varepsilon}(s)\|_H \theta(d z) d s \nonumber\\
\leq & 2 \mathbb{E} \int_0^T\int_Z l_2(s,z)(\| X^{\varphi_\varepsilon}(s)\|_H + \mathcal{L}_{X^{\varepsilon}(s)}(\|\cdot\|_H^2)^\frac{1}{2} + 1)
|\varphi_{\varepsilon}(s, z)-1| \|X^{\varphi_\varepsilon}(s)\|_H \theta(d z) d s \nonumber\\
\leq & 2 \mathbb{E} \int_0^T\|X^{\varphi_\varepsilon}(s)\|_H(\|X^{\varphi_\varepsilon}(s)\|_H+\|X^{\varepsilon}(s)\|_H+ 1 ) \int_Z l_2(s, z)|\varphi_{\varepsilon}(s, z)-1| \theta(d z) d s \nonumber\\
\leq & C_{T,\|x\|_H} \mathbb{E} \int_0^T (\|X^{\varphi_\varepsilon}(s)\|_H^2+1) \int_Z l_2(s, z)|\varphi_{\varepsilon}(s, z)-1| \theta(d z) d s \nonumber\\
\leq & C_{T,\|x\|_H} \mathbb{E} \int_0^T  \int_Z l_2(s, z)|\varphi_{\varepsilon}(s, z)-1| \theta(d z) d s \nonumber\\
& + C_{T,\|x\|_H} \mathbb{E} \int_0^T \|X^{\varphi_\varepsilon}(s)\|_H^2 \int_Z l_2(s, z)|\varphi_{\varepsilon}(s, z)-1| \theta(d z) d s   \nonumber\\
\leq & C_{l_2, N,T,\|x\|_H} +C_{l_2, N,T,\|x\|_H} \mathbb{E} \int_0^T \|X^{\varphi_\varepsilon}(s)\|_H^2 d s     .
 \end{align}

From \cite[Lemma 3.4]{BCD} or \cite[Lemma 4.1]{XZ}, we know that for any $h \in \mathcal{H}_2 \cap L_2(\theta_T),$ there exists a constant $C_{h,2,2,N}$ such that
\begin{equation}\label{e554}
C_{h, 2, 2, N}:=\sup _{g \in S^N} \int_0^T \int_Z h^2(s, z)(g(s, z)+1) \theta(d z) d s<\infty .
\end{equation}

Moreover, it is easy to see that (see e.g. \cite[(4.9)]{BLZ})
$$
\left|\|x+h\|_H^p-\|x\|_H^p-p\|x\|_H^{p-2}\langle x, h\rangle_{H}\right| \leq c_p\left(\|x\|_H^{p-2}\|h\|_H^2+\|h\|_H^p\right), \quad \forall x, h \in H .
$$
Let $p=2,$ then
$$
\hat{I_3}(t) \leq c_2 \int_0^t \int_Z\|\varepsilon f(s, X^{\varphi_\varepsilon}(s-),\mathcal{L}_{{X^\varepsilon}(s)}, z)\|_H^2 N^{\varepsilon^{-1} \varphi_{\varepsilon}}(d z, d s) .
$$
By $(H5)(ii)$ and (\ref{e554}), we can get
\begin{align}\label{e555}
& \mathbb{E} \Bigg[\sup_{t \in [0,T]}\hat{I}_3 (t)\Bigg]   \nonumber\\
\leq & \mathbb{E}\bigg(\int_0^T \int_Z C \|\varepsilon f(s, X^{\varphi_\varepsilon}(s-),\mathcal{L}_{{X^\varepsilon}(s)}, z)\|_H^2 N^{\varepsilon^{-1} \varphi_{\varepsilon}}(d z, d s)\bigg)  \nonumber\\
= & \varepsilon C \mathbb{E}\bigg(\int_0^T \int_Z\|f(s, X^{\varphi_\varepsilon}(s),\mathcal{L}_{{X^\varepsilon}(s)}, z)\|_H^2 \varphi_{\varepsilon}(s, z) \theta(d z) d s\bigg)  \nonumber\\
\leq & \varepsilon C \mathbb{E}\Bigg[\int_0^T \int_Z l_2^2(s, z) (\| X^{\varphi_\varepsilon}(s)\|_H + \mathcal{L}_{{X^\varepsilon}(s)}(\|\cdot\|_H^2)^\frac{1}{2} + 1 )^2 \varphi_{\varepsilon}(s, z) \theta(d z) d s\Bigg]  \nonumber\\
\leq & \varepsilon C \mathbb{E}\Bigg[\bigg(\sup _{s \in[0, T]}\|X^{\varphi_\varepsilon}(s)\|_H^2+ \sup _{s \in[0, T]}\|X^{\varepsilon}(s)\|_H + 1\bigg) \int_0^T \int_Z l_2^2(s, z) \varphi_{\varepsilon}(s, z) \theta(d z) d s\Bigg]  \nonumber\\
\leq & \varepsilon C_{l_2, 2, 2, N ,T,\|x\|_H} \mathbb{E}\bigg(\sup _{s \in[0, T]}\|X^{\varphi_\varepsilon}(s)\|_H^2\bigg)+\varepsilon C_{l_2, 2, 2, N, T,\|x\|_H} .
\end{align}

According to $(H5)(ii)$,  BDG's inequality (cf. \cite[Proposition 2.2]{WZ}) and (\ref{e554}), we have
\begin{align}\label{e556}
& \mathbb{E}\Bigg[\sup _{t \in[0, T]}\hat{I_4}(t)\Bigg] \nonumber\\
= & \mathbb{E}\Bigg[\sup_{t \in [0,T]} \int_0^t \int_Z 2 \langle \varepsilon f(s, X^{\varphi_\varepsilon}(s-),\mathcal{L}_{{X^\varepsilon}(s)}, z), X^{\varphi_\varepsilon}(s-) \rangle_H \widetilde{N}^{\varepsilon^{-1}\varphi_{\varepsilon}} (d z, d s) \Bigg] \nonumber\\
\leq & 4\varepsilon^2 \mathbb{E}\Bigg[ \int_0^T \int_Z \langle  f(s, X^{\varphi_\varepsilon}(s-),\mathcal{L}_{{X^\varepsilon}(s)}, z), X^{\varphi_\varepsilon}(s-) \rangle_H^2 N^{\varepsilon^{-1}\varphi_{\varepsilon}} (d z, d s) \Bigg] ^\frac{1}{2} \nonumber\\
\leq & 4\varepsilon^2 \mathbb{E}\Bigg[ \int_0^T \int_Z l_2^2(s, z) (\| X^{\varphi_\varepsilon}(s-)\|_H + \mathcal{L}_{{X^\varepsilon}(s)}(\|\cdot\|_H^2)^\frac{1}{2} + 1)^2 \| X^{\varphi_\varepsilon}(s-)\|_H^2 N^{\varepsilon^{-1}\varphi_{\varepsilon}} (d z, d s) \Bigg] ^\frac{1}{2} \nonumber\\
\leq & 4\varepsilon^2 \mathbb{E}\Bigg[ \sup_{s \in [0,T]} \| X^{\varphi_\varepsilon}(s)\|_H^2 \int_0^T \int_Z  l_2^2(s, z) ( \| X^{\varphi_\varepsilon}(s-)\|_H +\| X^{\varepsilon}(s-)\|_H  + 1)^2 N^{\varepsilon^{-1}\varphi_{\varepsilon}} (d z, d s) \Bigg] ^\frac{1}{2} \nonumber\\
\leq & \frac{1}{4} \mathbb{E}\Bigg[ \sup_{s \in [0,T]} \| X^{\varphi_\varepsilon}(s)\|_H^2 \Bigg] \nonumber\\
&+8\varepsilon C_{T,\|x\|_H} \mathbb{E}\Bigg[\bigg(\sup_{s \in [0,T]}\| X^{\varphi_\varepsilon}(s)\|_H^2 +1 \bigg) \int_0^T \int_Z  l_2^2(s, z) \varphi_{\varepsilon}(s, z) \theta(d z) d s \Bigg] \nonumber\\
\leq & \bigg(\frac{1}{4} + 4\varepsilon  C_{l_2,2,2,N,T,\|x\|_H} \bigg) \mathbb{E}\Bigg[ \sup_{s \in [0,T]} \| X^{\varphi_\varepsilon}(s)\|_H^2 \Bigg] +8\varepsilon C_{l_2,2,2,N,T,\|x\|_H} .
\end{align}

By using Gronwall's inequality and combining (\ref{e551})-(\ref{e556}), we have
$$
 \sup _{\varepsilon \in(0, \varepsilon_2]} \Bigg[\mathbb{E}\bigg(\sup _{t \in[0, T]}\|X^{\varphi_\varepsilon}(t)\|_H^2\bigg)+\mathbb{E}\bigg(\int_0^T\|X^{\varphi_\varepsilon}(t)\|_V^\alpha d t\bigg) \Bigg]
 \leq C_{l_2, N, T, \|x\|_H} < \infty.
$$
The proof is completed.    \hspace{\fill}$\Box$
\end{proof}

\subsection{Proof of Theorem \ref{LDP}}\label{ProTh}

In order to prove our main result, we recall the following
lemmas for late use, whose proof can refer \cite[Lemma 3.11]{BCD}.
\begin{lemma}\label{L1}
Let $\chi \in \mathcal{H}^{\infty} \cap L_2(\theta_T)$ and $N \in \mathbb{N},$ then for any $\varepsilon >0,$ there exists a compact set $K_\varepsilon \subset Z$ such that
\begin{equation}\label{e504}
\sup _{h \in S^N} \int_0^T \int_{K_{\varepsilon}^c} \chi(s, z)|h(s, z)-1| \theta(d z) d s \leq \varepsilon ,
\end{equation}
where $K_\varepsilon ^c$ is the complement of $K_\varepsilon.$
\end{lemma}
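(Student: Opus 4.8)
The plan is to prove the (stronger, $\varepsilon$-free) statement
\[
\lim_{n\to\infty}\ \sup_{h\in S^N}\int_0^T\!\!\int_{K_n^c}\chi(s,z)\,|h(s,z)-1|\,\theta(dz)\,ds=0 ,
\]
where $\{K_n\}$ is the fixed increasing sequence of compacts with $\bigcup_n K_n=Z$; the lemma then follows at once by taking $K_\varepsilon:=K_n$ for $n$ large. Writing $\Gamma_n:=[0,T]\times K_n^c\subset Z_T$, we have $\Gamma_n\downarrow\varnothing$; moreover $\chi\in L_2(\theta_T)$ gives $\int_{\Gamma_n}\chi^2\,d\theta_T\to0$ by dominated convergence, and, by Chebyshev's inequality, $\theta_T(\Gamma_1\cap\{\chi>R\})\le R^{-2}\int_{Z_T}\chi^2\,d\theta_T<\infty$ for every $R>0$.

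The key tool is a Fenchel--Young inequality tying together $\chi$, $h$ and the entropy function $l$. The convex conjugate of $y\mapsto e^{y}-1-y$ is $x\mapsto l(1+x)$ (finite on $[-1,\infty)$), so, applying it with $x=h-1$ and $y=\pm\sigma\chi$ and using $e^{-t}-1+t\le t^2/2$, one obtains for every $\sigma>0$ and all $h\ge0$
\[
\chi\,|h-1|\ \le\ \frac1\sigma\bigl(e^{\sigma\chi}-1-\sigma\chi\bigr)+\frac\sigma2\,\chi^2+\frac1\sigma\,l(h).
\]
Integrating over $\Gamma_n$ and using $\int_{\Gamma_n}l(h)\,d\theta_T\le Q(h)\le N$ (legitimate since $l\ge0$ and $h\in S^N$), the uniformity over $S^N$ is dealt with, and it only remains to show that $\int_{\Gamma_n}\bigl(e^{\sigma\chi}-1-\sigma\chi\bigr)\,d\theta_T\to0$ for each fixed $\sigma$, since the $\tfrac\sigma2\chi^2$ term already tends to $0$.

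This exponential term is the main obstacle, the point being that $\theta_T$ is only $\sigma$-finite, so $\Gamma_n$ typically has infinite measure and $\chi\in\mathcal H^{\infty}$ cannot be invoked on $\Gamma_n$ directly. I would split at a level $R>0$: on $\{\chi\le R\}$ use $e^{\sigma\chi}-1-\sigma\chi\le\tfrac{\sigma^2}{2}e^{\sigma R}\chi^2$, so this part is bounded by $\tfrac{\sigma^2}{2}e^{\sigma R}\int_{\Gamma_n}\chi^2\to0$; on the finite-measure set $\{\chi>R\}$, the function $e^{\sigma\chi}\mathbf 1_{\Gamma_1\cap\{\chi>R\}}$ belongs to $L_1(\theta_T)$ by $\chi\in\mathcal H^{\sigma}$, hence $\int_{\Gamma_n\cap\{\chi>R\}}e^{\sigma\chi}\,d\theta_T\to0$ by dominated convergence because $\Gamma_n\cap\{\chi>R\}\downarrow\varnothing$. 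Combining, $\limsup_{n\to\infty}\sup_{h\in S^N}\int_{\Gamma_n}\chi|h-1|\,d\theta_T\le N/\sigma$, and letting $\sigma\to\infty$ yields the claim.
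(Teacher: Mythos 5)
Your proof is correct, but note that the paper itself does not prove Lemma \ref{L1} at all: it defers to \cite[Lemma 3.11]{BCD} (just as Lemma \ref{lem4.2} is deferred to \cite{BCD} and \cite{XZ}). What you give is a complete, self-contained argument in the same spirit as the cited one: the exponential--entropy Young inequality coming from the conjugacy of $y\mapsto e^{y}-1-y$ and $x\mapsto l(1+x)$ (of which the paper's inequality (\ref{e522}) is a cruder form), the uniform bound $\int_{Z_T} l(h)\,d\theta_T=Q(h)\le N$ on $S^N$, a truncation of $\chi$ at level $R$ treating $\{\chi\le R\}$ via $\chi\in L_2(\theta_T)$ and $\{\chi>R\}$ via its finite $\theta_T$-measure (Chebyshev) together with $\chi\in\mathcal{H}^{\sigma}$ and dominated convergence, and finally $\sigma\to\infty$ after $n\to\infty$. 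All steps check out: $h\ge0$ makes $x=h-1\ge-1$ admissible in the Fenchel--Young step; $\Gamma_n=[0,T]\times K_n^c\downarrow\varnothing$ because the $K_n$ exhaust $Z$ (such a sequence exists since $Z$ is locally compact Polish, and the paper already fixes one in Section \ref{PT2.2}); and the domination $e^{\sigma\chi}\mathbf{1}_{\Gamma_1\cap\{\chi>R\}}\in L^1(\theta_T)$ is exactly what membership in $\mathcal{H}^{\sigma}$ yields on a set of finite $\theta_T$-measure. Your conclusion is in fact slightly stronger than the lemma (the compact set can be taken from the fixed exhausting sequence, with the supremum converging to $0$), which is harmless. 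One cosmetic point: you should remark explicitly that $\chi\ge0$ (this is built into the definitions of $\mathcal{H}^{\varpi}$ and $L_2(\theta_T)$), since the choice $y=\pm\sigma\chi$ and the bound $e^{-t}-1+t\le t^2/2$ use it.
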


\begin{lemma}\label{L2}
Let $\chi \in \mathcal{H}^{\infty},$ for any compact set $K\subset Z$
\begin{equation}
 \lim _{J \rightarrow \infty} \sup _{h \in S^N} \int_0^T \int_K \chi(s, z)  1_{\left\{\chi(s, z) \geq J\right\}}(s, z) h(s, z) \theta(d z) d s=0 .
\end{equation}
\end{lemma}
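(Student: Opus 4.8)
The plan is to combine the elementary Young-type inequality associated with the convex function $l(r)=r\log r-r+1$ with the exponential integrability built into the definition of $\mathcal{H}^{\infty}$. First I would record that for every $a\ge 0$ and $b\in\RR$ one has $ab\le l(a)+e^{b}-1\le l(a)+e^{b}$ (this is the Fenchel--Young inequality for the pair $l$, $l^{*}$ with conjugate $l^{*}(y)=e^{y}-1$, and can also be checked directly by minimizing $a\mapsto l(a)+e^{b}-ab$). Applying it with $a=h(s,z)$ and $b=\beta\chi(s,z)$ for a fixed parameter $\beta>0$ yields the pointwise bound
\begin{equation*}
\chi(s,z)\,h(s,z)\le \frac{1}{\beta}\,l\big(h(s,z)\big)+\frac{1}{\beta}\,e^{\beta\chi(s,z)},\qquad (s,z)\in[0,T]\times Z .
\end{equation*}

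Next I would multiply through by $1_{\{\chi\ge J\}}$, integrate over $[0,T]\times K$, and use $l\ge 0$ together with $\int_{[0,T]\times K}l(h)\,\theta_T\le Q(h)\le N$ for $h\in S^{N}$. This gives, uniformly in $h\in S^{N}$,
\begin{equation*}
\int_0^T\!\!\int_K \chi(s,z)\,1_{\{\chi\ge J\}}(s,z)\,h(s,z)\,\theta(dz)\,ds
\le \frac{N}{\beta}+\frac{1}{\beta}\int_0^T\!\!\int_K e^{\beta\chi(s,z)}\,1_{\{\chi\ge J\}}(s,z)\,\theta(dz)\,ds .
\end{equation*}
Since $K$ is compact and $\theta\in\mathcal{M}_{FC}(Z)$ we have $\theta_T([0,T]\times K)<\infty$, so the hypothesis $\chi\in\mathcal{H}^{\infty}\subset\mathcal{H}^{2\beta}$ gives $e^{2\beta\chi}\in L^{1}([0,T]\times K,\theta_T)$. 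As $\chi$ is finite-valued, $1_{\{\chi\ge J\}}\to 0$ pointwise, and bounding $e^{\beta\chi}1_{\{\chi\ge J\}}\le e^{-\beta J}e^{2\beta\chi}$ (or simply invoking dominated convergence with dominating function $e^{\beta\chi}\in L^1$) shows that the last integral tends to $0$ as $J\to\infty$, uniformly in $h\in S^{N}$.

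Finally I would take $\limsup_{J\to\infty}$ of the supremum over $h\in S^{N}$ in the previous display to obtain $\limsup_{J\to\infty}\sup_{h\in S^{N}}(\cdots)\le N/\beta$, and then let $\beta\to\infty$; since the left-hand side is nonnegative and does not depend on $\beta$, this forces the limit to be $0$, which is the claim. The only real subtlety is getting the order of the two limiting procedures right — one must fix $\beta$, send $J\to\infty$ first, and only afterwards let $\beta\to\infty$ — together with the observation that compactness of $K$ is precisely what makes $[0,T]\times K$ a set of finite $\theta_T$-measure, so that the defining exponential-integrability property of $\mathcal{H}^{\infty}$ is applicable; no further estimates are needed.
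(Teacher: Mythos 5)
Your argument is correct, and it follows essentially the same route the paper (implicitly) relies on: the paper refers the reader to [BCD, Lemma~3.11] for both Lemmas~\ref{L1} and~\ref{L2}, and that proof rests on exactly the Fenchel--Young inequality for $l(r)=r\log r-r+1$ with conjugate $e^{y}-1$, which the paper itself also invokes (via Remark~3.3 of [BCD]) in the form of inequality~(\ref{e522}). Your version — fix $\beta$, bound the $l(h)$ part by $N/\beta$ uniformly over $S^N$, kill the exponential tail by dominated convergence using $\theta_T([0,T]\times K)<\infty$ and $\chi\in\mathcal{H}^{\infty}$, then send $\beta\to\infty$ — is a clean, complete, and faithful reconstruction of that argument.
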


\medskip
After obtaining the above preparations, we can now complete the proof of Theorem \ref{LDP} by verifying Condition 4.1, which will be divided into the following two steps.

\medskip

\textbf{Step 1}:
We now prove $(a)$ in Condition 4.1 first.

For any $N<\infty,$ let $g_n,$ $n\geq1,$ $g \in S^N$ be such that $g_n \to g$ as $n \to \infty.$ By Lemma \ref{SEE}, we know the following deterministic PDE
\begin{equation}\label{e506}
 \left\{\begin{array}{l}
d X^{g}(t)=A(t,X^{g}(t),\mathcal{L}_{X^0(t)})dt+\int_Z f\left(s, X^{g}(s),\mathcal{L}_{X^0(t)}, z\right)\left(g(s, z)-1\right) \theta(d z) d t, \\
X^{g}(0)=x \in H,
\end{array}\right.
\end{equation}
has a unique solution $X^g:=\mathcal{G}^0(g),$ and the following equation
\begin{equation}\label{e507}
 \left\{\begin{array}{l}
d X^{g_n}(t)=A(t,X^{g_n}(t),\mathcal{L}_{X^0(t)})dt+\int_Z f\left(s, X^{g_n}(s),\mathcal{L}_{X^0(t)}, z\right)\left(g_n(s, z)-1\right) \theta(d z) d t, \\
X^{g_n}(0)=x \in H,
\end{array}\right.
\end{equation}
has a unique solution $X^{g_n}:=\mathcal{G}^0(g_n).$

From (\ref{e506}) and (\ref{e507}), we can get
\begin{align}
& d\left(X^{g_n}(t)-X^g(t)\right) \nonumber\\
=
& \big[A(t,X^{g_n}(t),\mathcal{L}_{X^0(t)})-A(t,X^{g}(t),\mathcal{L}_{X^0(t)})\big] d t \nonumber\\
& +\int_Z \Big(f\left(s, X^{g_n}(s),\mathcal{L}_{X^0(t)}, z\right)\left(g_n(s, z)-1\right)-f\left(s, X^g(s),\mathcal{L}_{X^0(t)}, z\right)(g(s, z)-1) \Big)\theta(d z) d t .
\end{align}
Applying the chain rule to $\left\|X^{g_n}(t)-X^g(t)\right\|_H^2,$ we get
\begin{align}\label{e509}
& \|X^{g_n}(t)-X^g(t)\|_H^2 \nonumber\\
=& 2 \int_0^t \,_{V^*}\big\langle A(s,X^{g_n}(s),\mathcal{L}_{X^0(s)})-A(s,X^{g}(s),\mathcal{L}_{X^0(s)}), X^{g_n}(s)-X^g(s)\big\rangle_{V} d s \nonumber\\
& +2 \int_0^t\bigg\langle\int_Z f(s, X^{g_n}(s),\mathcal{L}_{X^0(s)}, z)(g_n(s, z)-1)-f(s, X^g(s), \mathcal{L}_{X^0(s)},z)(g(s, z)-1) \theta(d z), \nonumber\\
&~~~ X^{g_n}(s)-X^g(s)\bigg\rangle_{H} d s  \nonumber\\
\leq & C \int_0^t \|X^{g_n}(s)-X^g(s)\|_H^2 ds \nonumber\\
& +2 \int_0^t  \bigg\langle \int_Z f (s, X^{g}(s),\mathcal{L}_{X^0(s)}, z)(g_n(s, z)-g(s, z)) \theta(d z), X^{g_n}(s)-X^g(s) \bigg\rangle_H  ds \nonumber\\
& +2 \int_0^t\bigg\langle\int_Z (f(s, X^{g_n}(s),\mathcal{L}_{X^0(s)}, z)-f(s, X^g(s), \mathcal{L}_{X^0(s)},z))(g_n(s, z)-1) \theta(d z), \nonumber\\
&~~~ X^{g_n}(s)-X^g(s)\bigg\rangle_{H} d s  \nonumber\\
=:&C \int_0^t \|X^{g_n}(s)-X^g(s)\|_H^2 ds+Q_{n,1}(t)+Q_{n,2}(t) .
\end{align}

From $(H5)(i)$, for $Q_{n,2}(t),$ we can obtain
\begin{align}\label{e510}
|Q_{n,2}(t)|
&\leq 2 \int_0^t \int_Z \| X^{g_n}(s)-X^g(s) \| _H \nonumber\\
&~~~\cdot\|f(s, X^{g_n}(s),\mathcal{L}_{X^0(s)}, z)-f(s, X^g(s), \mathcal{L}_{X^0(s)},z)\|_H|g_n(s, z)-1|\theta(dz)d s \nonumber\\
&\leq 2 \int_Z l_1(s,z)|g_n(s, z)-1|\| X^{g_n}(s)-X^g(s) \| _H ^2 \theta(d z) d s .
\end{align}

Denote
\begin{equation}\label{e511}
h_n(s):=\int_Z l_1(s, z)\left|g_n(s, z)-1\right| \theta(d z).
\end{equation}
From (\ref{e404}), we know
\begin{equation}\label{e512}
 \sup _{n \geq 1} \int_0^T h_n(s) d s \leq C_{l_1, N} .
\end{equation}

Taking (\ref{e510})-(\ref{e511}) into (\ref{e509}), we obtain
\begin{align}
& \left\|X^{g_n}(t)-X^g(t)\right\|_{H}^2 \nonumber \\
\leq & c\int_0^t\left\|X^{g_n}(s)-X^g(s)\right\|_{H}^2 d s+Q_{n, 1}(t)+2 \int_0^t h_n(s) \cdot\left\|X^{g_n}(s)-X^g(s)\right\|_{H}^2 d s \text {. }
\end{align}

According to Gronwall's inequality and (\ref{e512}), we can obtain
\begin{align}\label{e514}
 \sup _{t \in[0, T]}\left\|X^{g_n}(t)-X^g(t)\right\|_{H}^2 &\leq \sup _{t \in[0, T]}\left|Q_{n, 1}(t)\right| \cdot e^{cT+2 \int_0^T h_n(s) d s} \nonumber\\
& \leq \sup _{t \in[0, T]}\left|Q_{n, 1}(t)\right| \cdot e^{cT+2 C_{l_1, N}}\nonumber \\
& =: C_{l_1, N, T} \cdot \sup _{t \in[0, T]}\left|Q_{n, 1}(t)\right| \text {. }
\end{align}

Now, let us estimate $Q_{n, 1}(t)$. By Lemma \ref{L1}, we know that for any $\varepsilon > 0,$ there exists a compact subset $K_\varepsilon$ of $Z$ such that (\ref{e504}) holds. We get
\begin{align}
 Q_{n, 1}(t)
&=2 \int_0^t \int_{K_{\varepsilon}}\left\langle f\left(s, X^g(s),  \mathcal{L}_{X^0(s)}, z\right)\left(g_n(s, z)-g(s, z)\right), X^{g_n}(s)-X^g(s)\right\rangle_{H} \theta(d z) d s \nonumber\\
&~~~ +2 \int_0^t \int_{K_{\varepsilon}^c}\left\langle f\left(s, X^g(s), \mathcal{L}_{X^0(s)}, z\right)\left(g_n(s, z)-g(s, z)\right), X^{g_n}(s)-X^g(s)\right\rangle_{H} \theta(d z) d s \nonumber\\
& =: I_{n, 1}(t)+I_{n, 2}(t) \text {. }
\end{align}

From Lemma \ref{SEE} and Lemma \ref{L1}, we know that, for any $t \in[0,T]$
\begin{align}
\left|I_{n, 2}(t)\right|
 &\leq 2 \int_0^T \int_{K_{\varepsilon}^c} l_2(s, z)(\left\|X^g(s)\right\|_{H}+\mathcal{L}_{X^0(s)}(\|\cdot\|_H^2)^{\frac{1}{2}}+1)  \nonumber\\
&~~~ \cdot \left|g_n(s, z)-g(s, z)\right| \cdot\left\|X^{g_n}(s)-X^g(s)\right\|_{H} \theta(d z) d s \nonumber\\
& \leq 2\sup _{s \in[0, T]} \big[ (\|X^g(s)\|_{H}+\|X^0(s)\|_{H}+1)(\|X^{g_n}(s)\|_{H}+\|X^g(s)\|_{H})\big]  \nonumber\\
&~~~ \cdot\Big(\int_0^T \int_{K_{\varepsilon}^c} l_2(s, z)\left|g_n(s, z)-1\right| \theta(d z) d s+\int_0^T \int_{K_{\varepsilon}^c} l_2(s, z)|g(s, z)-1| \theta(d z) d s\Big) \nonumber\\
& \leq C_{l_2,N,T} \varepsilon \left(\|x\|_H^2+1\right)   \text {. }
\end{align}

To estimate $I_{n,1}(t),$ we define
$$A_{2,J}=\Big\{(s,z) \in [0,T] \times Z:l_2(s,z) \geq J, J\in\mathbb{R}\Big\}.$$

In the following, for a subset $\mathcal{A} \subset [0,T] \times Z,$ we denote $\mathcal{A}^c$ the complement of $\mathcal{A}.$

Denote
\begin{align*}
 I_{n, 1, J}(t) := & 2 \int_0^t \int_{K_{\varepsilon}}\big\langle f(s, X^g(s), \mathcal{L}_{X^0(s)}, z)(g_n(s, z)-g(s, z)), \\
& X^{g_n}(s)-X^g(s)\big\rangle_{H} 1_{A_{2, J}}(s, z) \theta(d z) d s,\\
I_{n, 1, J^c}(t) := & 2 \int_0^t \int_{K_{\varepsilon}}\big\langle f(s, X^g(s),\mathcal{L}_{X^0(s)}, z)(g_n(s, z)-g(s, z)), \\
& X^{g_n}(s)-X^g(s)\big\rangle_{H} 1_{A_{2, J}^c}(s, z) \theta(d z) d s.
\end{align*}
It is obvious that
$$I_{n,t}(t)=I_{n, 1, J}(t)+I_{n, 1, J^c}(t).$$

Let us estimate $I_{n, 1, J}(t)$ and $I_{n, 1, J^c}(t)$ respectively. Notice that from (\ref{e403}), for any $t \in [0,T]$
\begin{align}\label{e517}
& \left|I_{n, 1, J}(t)\right| \nonumber\\
\leq & 2 \int_0^T \int_{K_{\varepsilon}} \|f(s,X^g(s),\mathcal{L}_{X^0(s)},z)\|_H |g_n(s, z)-g(s, z)|  \nonumber\\
&~~~ \cdot \left\|X^{g_n}(s)-X^g(s)\right\|_{H} 1_{A_{2, J}}(s, z) \theta(d z) d s \nonumber\\
\leq & 2 \int_0^T \int_{K_{\varepsilon}} l_2(s, z)\big(\left\|X^g(s)\right\|_{H}+\mathcal{L}_{X^0(s)}(\|\cdot\|_H^2)^{\frac{1}{2}}+1\big)\big(g_n(s, z)+g(s, z)\big)  \nonumber\\
&~~~ \cdot \big(\left\|X^{g_n}(s)\right\|_{H}+\left\|X^g(s)\right\|_{H}\big) 1_{A_{2, J}}(s, z) \theta(d z) d s \nonumber\\
\leq & 2 \sup _{s \in[0, T]}\Big[\big(\|X^g(s)\|_{H}+\|X^0(s)\|_{H}+1\big)\big(\left\|X^{g_n}(s)\right\|_{H}+\left\|X^g(s)\right\|_{H}\big)\Big] \nonumber\\
&~~~ \cdot \int_0^T \int_{K_{\varepsilon}} l_2(s, z)\big(g_n(s, z)+g(s, z)\big) 1_{A_{2, J}}(s, z) \theta(d z) d s \nonumber\\
\leq & C_{l_2,N,T}\left(\|x\|_H^2+1\right) \cdot \sup _{h \in S^N} \int_0^T \int_{K_{\varepsilon}} l_2(s, z) h(s, z) 1_{A_{2, J}}(s, z) \theta(d z) d s.
\end{align}
By Lemma \ref{L2}, we know that for $\varepsilon >0,$ there exists $J_\varepsilon > 0,$ such that
$$
\sup _{h \in S^N} \int_0^T \int_{K_{\varepsilon}} l_2(s, z) h(s, z) 1_{\left\{l_2(s, z) \geq J_{\varepsilon}\right\}} \theta(d z) d s \leq \frac{\varepsilon}{C_{l_2,N,T}\left(\|x\|_H^2+1\right)}.
$$
Choose $J$ in (\ref{e517}) to be $J_\varepsilon,$ then we have
\begin{equation}\label{e518}
\sup_{t \in [0,T]}|I_{n,1,J}(t)|\leq \varepsilon.
\end{equation}
According to (\ref{e514})-(\ref{e518}), we can obtain
\begin{align}\label{e519}
& \sup _{t \in[0, T]}\left\|X^{g_n}(t)-X^g(t)\right\|_{H}^2 \nonumber\\
\leq & C_{l_1, N, T} \cdot \sup _{t \in[0, T]}\left(\left|I_{n, 1, J_{\varepsilon}}(t)\right|+\left|I_{n, 1, J_{\varepsilon}^c}(t)\right|+\left|I_{n, 2}(t)\right|\right) \nonumber\\
\leq & C_{l_1, N, T} \cdot \sup _{t \in[0, T]}\left(\varepsilon+\left|I_{n, 1, J_{\varepsilon}^c}(t)\right|+ \varepsilon C_{l_2,N,T}(\|x\|_H^2+1) \right) .
\end{align}

To estimate $|I_{n, 1, J_{\varepsilon}^c}(t)|,$ we denote
$$U^n(s)=X^{g_n}(s)-X^g(s), \quad U^n(s(\delta))=X^{g_n}(s(\delta))-X^g(s(\delta)), \quad s \in [0,T],$$
where $s(\delta):= [\frac{s}{\delta}]\delta,$ $\delta > 0$ is small enough, and $[s]$ is the largest integer smaller than $s.$
Then
\begin{equation}\label{e520}
\sup _{t \in[0, T]}\left|I_{n, 1, J_{\varepsilon}^c}(t)\right| \leq 2 \sum_{i=1}^4 \tilde{I}_i,
\end{equation}
where
\begin{align*}
&\tilde{I}_1 :=\sup _{t \in[0, T]} \bigg| \int_0^t \int_{K_{\varepsilon}}\left\langle f(s, X^g(s),\mathcal{L}_{X^0(s)}, z)\big(g_n(s, z)-g(s, z)\big),\right. \\
&~~~~~~ \left.U^n\left(s\right)-U^n\left(s(\delta)\right)\right\rangle_{H} 1_{A_{2, J_{\varepsilon}}^c}(s, z) \theta(d z) d s \bigg|, \\
& \tilde{I}_2 :=\sup _{t \in[0, T]} \bigg| \int_0^t \int_{K_{\varepsilon}}\left\langle\left(f(s, X^g(s),\mathcal{L}_{X^0(s)}, z)-f(s, X^g(s(\delta)), \mathcal{L}_{X^0(s)},z)\right)\big(g_n(s, z)-g(s, z)\big),\right. \\
&~~~~~~~~ \left.U^n\left(s(\delta)\right)\right\rangle_{H} 1_{A_{2, J_{\varepsilon}}^c}(s, z) \theta(d z) d s \bigg|, \\
& \tilde{I}_3 :=\sup _{1 \leq k \leq 2^m} \sup _{t_{k-1} \leq t \leq t_k}\bigg| \int_{t_{k-1}}^t \int_{K_{\varepsilon}}\left\langle f(s, X^g(s(\delta)),\mathcal{L}_{X^0(s)}, z)\big(g_n(s, z)-g(s, z)\big),\right. \\
&~~~~~~~~ \left.U^n(s(\delta))\right\rangle_{H} 1_{A_{2, J_{\varepsilon}}^c}(s, z) \theta(d z) d s \bigg|, \\
& \tilde{I}_4 :=\sum_{k=1}^{2^m}\bigg|\int_{t_{k-1}}^{t_k} \int_{K_{\varepsilon}}\langle f(s, X^g(s(\delta)),\mathcal{L}_{X^0(s)}, z)\big(g_n(s, z)-g(s, z)\big), \nonumber\\
&~~~~~~~~ U^n(s(\delta))\rangle_{H} 1_{A_{2, J_{\varepsilon}}^c}(s, z) \theta(d z) d s\bigg| .
\end{align*}
Notice that $\tilde{I}_i,$ $i=1,...,4$ depend on $n,m,\varepsilon,$ etc., to shorten the notations, we omit these indices.

To make the proof process more clear, we give an estimate of $\tilde{I}_4$ in the following lemma.

\begin{lemma}\label{I4E}
For any constant $m$ large enough, we have
\begin{align}
\lim_{n \to \infty}\tilde{I}_4
=& \lim_{n \to \infty}\sum_{k=1}^{2^m}\bigg|\int_{t_{k-1}}^{t_k} \int_{K_{\varepsilon}}\langle f(s, X^g(s(\delta)),\mathcal{L}_{X^0(s)}, z)\big(g_n(s, z)-g(s, z)\big), \nonumber\\
&U^n(s(\delta))\rangle_{H} 1_{A_{2, J_{\varepsilon}}^c}(s, z) \theta(d z) d s\bigg| =0 .\label{re-lem4.5}
\end{align}
\end{lemma}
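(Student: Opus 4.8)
The plan is to exploit the piecewise‑constant structure of $\tilde I_4$ so that the only genuinely new input is the convergence $g_n\to g$ in $S^N$, tested against a \emph{bounded} integrand over a set of finite $\theta_T$‑measure. Fix $m$ (hence $\delta$) and $\varepsilon$. On each interval $[t_{k-1},t_k)$ one has $s(\delta)=t_{k-1}$, so $X^g(s(\delta))=X^g(t_{k-1})$ and $U^n(s(\delta))=U^n(t_{k-1})$ are constant vectors of $H$ there; pulling $U^n(t_{k-1})$ out of the integral, the $k$-th summand equals $\big|\langle V_k^n,U^n(t_{k-1})\rangle_H\big|$ where
\[
V_k^n:=\int_{t_{k-1}}^{t_k}\!\int_{K_\varepsilon} f\big(s,X^g(t_{k-1}),\mathcal{L}_{X^0(s)},z\big)\,\big(g_n(s,z)-g(s,z)\big)\,1_{A_{2,J_\varepsilon}^c}(s,z)\,\theta(dz)\,ds\ \in H .
\]
By Lemma \ref{SEE}, applied to every $g_n\in S^N$ and to $g$, there is $C_0>0$ independent of $n$ with $\sup_{t\in[0,T]}\|X^{g_n}(t)\|_H\le C_0$ and $\sup_{t\in[0,T]}\|X^g(t)\|_H\le C_0$, so $\|U^n(t_{k-1})\|_H\le 2C_0$ uniformly in $n$ and $k$. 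Hence $\tilde I_4\le 2C_0\sum_{k=1}^{2^m}\|V_k^n\|_H$, and since this is a finite sum it suffices to prove $\|V_k^n\|_H\to 0$ as $n\to\infty$ for each fixed $k$.

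Next I record what we know about the integrand $F_k(s,z):=f\big(s,X^g(t_{k-1}),\mathcal{L}_{X^0(s)},z\big)1_{A_{2,J_\varepsilon}^c}(s,z)$ and about the control densities. First, $F_k$ is supported on $D_k:=[t_{k-1},t_k]\times K_\varepsilon$, which has $\theta_T(D_k)=(t_k-t_{k-1})\theta(K_\varepsilon)<\infty$; on $A_{2,J_\varepsilon}^c$ one has $l_2(s,z)<J_\varepsilon$, so $(H5)(ii)$ together with the bound above gives $\|F_k(s,z)\|_H\le J_\varepsilon\big(C_0+\sup_{t\in[0,T]}\|X^0(t)\|_H+1\big)=:C_k$, i.e. $F_k$ is a bounded (strongly measurable, hence Bochner integrable on $D_k$) $H$-valued function. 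Second, from $Q(g_n)\le N$ and the superlinearity of $l$ one has the uniform integrability $\sup_{n}\int_{\{g_n>M\}}g_n\,\theta_T\to 0$ and $\int_{\{g>M\}}g\,\theta_T\to 0$ as $M\to\infty$ (a standard consequence of $Q(g_n)\le N$; cf. \cite{BCD} and Lemma \ref{lem4.2}). Combining these two facts yields the scalar statement: if $\psi:Z_T\to\mathbb R$ is bounded, measurable and supported on a set of finite $\theta_T$-measure contained in $[0,T]\times K$ for a compact $K\subset Z$, then $\int_{Z_T}\psi g_n\,\theta_T\to\int_{Z_T}\psi g\,\theta_T$; indeed, approximating $\psi$ in $L^1(\theta_T)$ by $\psi_\ell\in C_c(Z_T)$ with $\|\psi_\ell\|_\infty\le\|\psi\|_\infty$, the continuous piece converges since $g_n\to g$ in $S^N$, while $\int|\psi-\psi_\ell|g_n\,\theta_T\le M\|\psi-\psi_\ell\|_{L^1(\theta_T)}+2\|\psi\|_\infty\int_{\{g_n>M\}}g_n\,\theta_T$ (and likewise with $g$) is made small uniformly in $n$ by choosing $M$ large and then $\ell$ large. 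In particular $\int_B(g_n-g)\,\theta_T\to0$ for every Borel $B\subset D_k$.

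To conclude that $\|V_k^n\|_H\to0$, fix $k$ and $\eta>0$, and pick a simple function $F_k^\eta=\sum_{j=1}^{J_\eta}e_j\,1_{B_j}$ with $B_j\subset D_k$ Borel, $\|F_k^\eta\|_\infty\le C_k$ and $\|F_k-F_k^\eta\|_{L^1(D_k,\theta_T;H)}\le\eta$. Then
\[
\|V_k^n\|_H\le \int_{D_k}\|F_k-F_k^\eta\|_H\,(g_n+g)\,\theta_T+\Big\|\sum_{j=1}^{J_\eta}e_j\Big(\int_{B_j}g_n\,\theta_T-\int_{B_j}g\,\theta_T\Big)\Big\|_H .
\]
Splitting the first integral at $\{g_n>M\}$ (resp.\ $\{g>M\}$) bounds it by $2M\eta+2C_k\big(\sup_n\int_{\{g_n>M\}}g_n\,\theta_T+\int_{\{g>M\}}g\,\theta_T\big)$, which is small uniformly in $n$ once $M$ is large and $\eta$ is small; the second term tends to $0$ as $n\to\infty$ by the scalar statement (a finite sum). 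Letting $n\to\infty$, then $\eta\to0$ and $M\to\infty$, gives $\|V_k^n\|_H\to0$, and summing over $k=1,\dots,2^m$ yields $\tilde I_4\to0$, which is \eqref{re-lem4.5}. The one delicate point is exactly this last step: one must upgrade the convergence $g_n\to g$ in $S^N$, which a priori only tests against continuous functions, to $H$‑norm convergence of the discontinuously truncated, vector‑valued integral $V_k^n$; this is where the truncation $1_{A_{2,J_\varepsilon}^c}$ (which makes $F_k$ bounded) and the uniform integrability furnished by $Q(g_n)\le N$ are both essential, and it is also the reason the terms $\tilde I_1,\tilde I_2,\tilde I_3$ had to be split off and shown small uniformly in $n$ beforehand.
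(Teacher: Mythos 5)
Your proposal is correct, but it proves the lemma by a genuinely different route than the paper. The paper keeps the pairing $f_{\delta,n}(s,z)=\langle f(s,X^g(s(\delta)),\mathcal{L}_{X^0(s)},z),U^n(s(\delta))\rangle_H 1_{A_{2,J_\varepsilon}^c}$ intact, extracts a weakly convergent subsequence of the (uniformly bounded) vectors $U^n(s(\delta))$ to get a pointwise limit $f_\delta$, renormalizes $g_n\,\theta_T$ and $g\,\theta_T$ to probability measures on each block $[t_{k-1},t_k)\times K_\varepsilon$, verifies a uniform relative entropy bound (\ref{e544}) using $Q(g_n)\le N$, and then invokes the Bou\'e--Dupuis convergence result \cite[Lemma 2.8]{BD} together with $m_{n,\varepsilon}\to m_\varepsilon$ and a telescoping identity (\ref{e545}). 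You instead exploit that $U^n(s(\delta))=U^n(t_{k-1})$ is constant on each block, factor it out of the integral, bound it uniformly by Lemma \ref{SEE}, and reduce the problem to showing $\|V_k^n\|_H\to0$ for the $H$-valued Bochner integral of the bounded, compactly supported integrand against $(g_n-g)\,\theta_T$; this you obtain from the vague convergence $\theta_T^{g_n}\to\theta_T^{g}$ defining the $S^N$-topology, upgraded to bounded measurable test functions via uniform integrability ($\int_{\{g_n>M\}}g_n\,d\theta_T\le MN/l(M)$, a consequence of $Q(g_n)\le N$ and the superlinearity of $l$) plus a Lusin/simple-function approximation. What each approach buys: the paper's argument leans on the ready-made weak-convergence lemma of \cite{BD} (at the cost of the entropy computation and a weak-compactness/subsequence step that, strictly speaking, should be completed by a subsequence-of-subsequences argument to recover the full-sequence limit); your argument is more elementary and self-contained, needs no weak limit $U_k$ and no relative entropy estimate, but requires you to justify the measure-theoretic approximation step (regularity of $\theta_T$ on compact neighborhoods of $[t_{k-1},t_k]\times K_\varepsilon$, which holds since $\theta\in\mathcal{M}_{FC}(Z)$ and finite Borel measures on Polish spaces are regular) and to keep the uniform sup-norm bound on the approximants, exactly as you flag. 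Both proofs use the same three structural ingredients: the a priori bound of Lemma \ref{SEE}, the truncation $1_{A_{2,J_\varepsilon}^c}$ making the integrand bounded, and the entropy bound $Q(g_n)\le N$; so your proof is a legitimate, arguably simpler, substitute for the one in the paper.
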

\begin{proof}
To estimate $\tilde{I}_4,$ for all $(s,z) \in [0,T]\times Z,$ we denote
$$
f_{\delta, n}(s, z)=\left\langle f\left(s, X^g\left(s(\delta)\right), \mathcal{L}_{X^0(s)}, z\right), U^n\left(s(\delta)\right)\right\rangle_{H} 1_{A_{2, J \varepsilon}^c}(s, z) .
$$
Since $U^n\left(s(\delta)\right)=X^{g_n}\left(s(\delta)\right)-X^g\left(s(\delta)\right)$ and (\ref{e403}), we know that
$$
\sup _{n \in \mathbb{N}} \sup _{s \in[0, T]}\left\|U^n\left(s(\delta)\right)\right\|_{H}  \leq 2 \sup _{h \in S^N} \sup _{s \in[0, T]}\|X^{h}(s)\|_H \leq \sqrt{C_{l_2,N, T}\left(\|x\|_H^2+1\right)}<\infty .
$$
 For any $s \in [0,T],$ $\{U^n(s(\delta))\}_{n\geq1}$ is weakly compact in $H$, hence there exists a subsequence (denoted as itself) and $U_k \in H,$ such that for all $\kappa \in H,$ we have
$$
\lim _{n \rightarrow \infty}\left\langle\kappa, U^n\left(s(\delta)\right)\right\rangle_{H}=\left\langle\kappa, U_k\right\rangle_{H}, \text { and }\left\|U_k\right\|_{H} \leq \sqrt{C_{l_2,N,T}\left(\|x\|_H^2+1\right)}<\infty .
$$
Therefore, on $[t_{k-1},t_k) \times K_\varepsilon$
\begin{equation}\label{e539}
\lim _{n \rightarrow \infty} f_{\delta, n}(s, z)=\left\langle f(s, X^g\left(s(\delta)\right), \mathcal{L}_{X^0(s)}, z), U_k\right\rangle_{H} 1_{A_{2, J_{\varepsilon}}^c}(s, z):=f_\delta(s, z), \theta(d z) d s\text{-}a . s . ,
\end{equation}
and
\begin{align}\label{e540}
 |f_{\delta, n}(s, z)|
 &\leq l_2(s, z)\big(\|X^g(s(\delta))\|_{H}+\mathcal{L}_{X^0(s)}(\|\cdot\|_H^2)^{\frac{1}{2}}+1\big)\|U^n(s(\delta))\|_{H} 1_{A_{2, J_{\varepsilon}}^c}(s, z) \nonumber\\
 &\leq J_{\varepsilon} C_{l_2,N,T}(\|x\|_H^2+1)<\infty,
\end{align}
which imply that
\begin{equation}\label{e541}
\left|f_\delta(s, z)\right| \leq J_{\varepsilon} C_{l_2,N, T}\left(\|x\|_H^2+1\right)<\infty .
\end{equation}

Similar to \cite[A.6]{BCD}, we can assume without loss of generality that
$$
m_{n, \varepsilon}:=\int_{t_{k-1}}^{t_k} \int_{K_{\varepsilon}} g_n(s, z) \theta(d z) d s \neq 0, \text { and } m_{\varepsilon}:=\int_{t_{k-1}}^{t_k} \int_{K_{\varepsilon}} g(s, z) \theta(d z) d s \neq 0 .
$$
Recall $ \theta_T=\lambda_T \otimes \theta $ and $\theta_T^g$  introduced in (\ref{e04}). Define probability measures $\tilde{\theta}_{n, \varepsilon},$ $\tilde{\theta}_{n}$ and $\nu_\varepsilon$ on $\left(\left[t_{k-1}, t_k\right) \times K_{\varepsilon}, \mathcal{B}\left(\left[t_{k-1}, t_k\right)\right) \otimes \mathcal{B}\left(K_{\varepsilon}\right)\right)$ as follows
\begin{align*}
& \tilde{\theta}_{n, \varepsilon}(\cdot)=\frac{1}{m_{n, \varepsilon}} \theta_T^{g_n}\big(\cdot \cap([t_{k-1}, t_k) \times K_{\varepsilon})\big), \\
& \tilde{\theta}_{\varepsilon}(\cdot)=\frac{1}{m_{\varepsilon}} \theta_T^g\big(\cdot \cap([t_{k-1}, t_k) \times K_{\varepsilon})\big), \\
& \nu_{\varepsilon}(\cdot)=\frac{\theta_T\big(\cdot \cap([t_{k-1}, t_k) \times K_{\varepsilon})\big)}{\theta_T\big([t_{k-1}, t_k) \times K_{\varepsilon}\big)} .
\end{align*}
Recall topology on $S^N$, see around (\ref{e04}). Since $g_n \to g$ as $n \to \infty$ in $S^N,$ we have that
\begin{equation}\label{e542}
\lim _{n \rightarrow \infty} m_{n, \varepsilon}=m_{\varepsilon},
\end{equation}
\begin{equation}\label{e543}
\tilde{\theta}_{n, \varepsilon}\text{ converges weakly to }\tilde{\theta}_{\varepsilon}\text{ as } n \to \infty.
\end{equation}
And there exists a constant $\alpha_\varepsilon$ such that the relative entropy function
\begin{align}\label{e544}
& \sup _{n \geq 1} R(\tilde{\theta}_{n, \varepsilon} \| \nu_{\varepsilon})\nonumber\\
= & \sup _{n \geq 1} \int_{[t_{k-1},t_k) \times {K_{\varepsilon}}}\bigg(\log \frac{d \tilde{\theta}_{n, \varepsilon}}{d \nu_\varepsilon}(x)\bigg) \tilde{\theta}_{n, \varepsilon}(dx)  \nonumber\\
= & \sup _{n \geq 1} \int_{t_{k-1}}^{t_k} \int_{K_{\varepsilon}} \log \bigg(\frac{\theta_T([t_{k-1}, t_k) \times K_{\varepsilon})}{m_{n, \varepsilon}} g_n(s, z)\bigg) \frac{1}{m_{n, \varepsilon}} g_n(s, z) \theta(d z) d s \nonumber\\
= & \sup _{n \geq 1}\bigg(\frac{1}{m_{n, \varepsilon}} \int_{t_{k-1}}^{t_k} \int_{K_{\varepsilon}}\log(g_n(s, z)) g_n(s, z) \theta(d z) d s \nonumber\\
& +\frac{1}{m_{n, \varepsilon}} \int_{t_{k-1}}^{t_k} \int_{K_{\varepsilon}}\log \frac{\theta_T([t_{k-1}, t_k) \times K_{\varepsilon})}{m_{n, \varepsilon}}g_n(s, z) \theta(d z) d s\bigg) \nonumber\\
= & \sup _{n \geq 1}\bigg(\frac{1}{m_{n, \varepsilon}} \int_{t_{k-1}}^{t_k} \int_{K_{\varepsilon}}(l(g_n(s, z))+g_n(s, z)-1) \theta(d z) d s \nonumber\\
& +\log \frac{\theta_T([t_{k-1}, t_k) \times K_{\varepsilon})}{m_{n, \varepsilon}}\bigg) \nonumber\\
\leq & \sup _{n \geq 1}\bigg(\frac{N}{m_{n, \varepsilon}}+1-\frac{\theta_T([t_{k-1}, t_k) \times K_{\varepsilon})}{m_{n, \varepsilon}}+\log \frac{\theta_T([t_{k-1}, t_k) \times K_{\varepsilon})}{m_{n, \varepsilon}}\bigg) \nonumber\\
\leq & \alpha_{\varepsilon}<\infty .
\end{align}

By (\ref{e539})-(\ref{e541}), (\ref{e543}), applying \cite[Lemma 2.8]{BD}, we have
\begin{enumerate}
\item [$(a)$]$\lim _{n \rightarrow \infty} \int_{t_{k-1}}^{t_k} \int_{K_{\varepsilon}} f_{\delta, n}(s, z) \tilde{\theta}_{n, \varepsilon}(d z d s)=\int_{t_{k-1}}^{t_k} \int_{K_{\varepsilon}} f_\delta(s, z) \tilde{\theta}_{\varepsilon}(d z d s)$,

\item [$(b)$] $\lim _{n \rightarrow \infty} \int_{t_{k-1}}^{t_k} \int_{K_{\varepsilon}} f_{\delta, n}(s, z) \tilde{\theta}_{\varepsilon}(d z d s)=\int_{t_{k-1}}^{t_k} \int_{K_{\varepsilon}} f_\delta(s, z) \tilde{\theta}_{\varepsilon}(d z d s)$,
\end{enumerate}
i.e.,

\begin{align*}
\left(a^{\prime}\right) &\lim _{n \rightarrow \infty} \int_{t_{k-1}}^{t_k} \int_{K_{\varepsilon}}\left\langle f\left(s, X^g\left(s(\delta)\right), \mathcal{L}_{X^0(s)}, z\right), U^n\left(s(\delta)\right)\right\rangle_{H} 1_{A_{2, J_{\varepsilon}}^c}(s, z) \frac{1}{m_{n, \varepsilon}} g_n(s, z) \theta(d z) d s \nonumber\\
= & \int_{t_{k-1}}^{t_k} \int_{K_{\varepsilon}}\left\langle f\left(s, X^g\left(s(\delta)\right), \mathcal{L}_{X^0(s)}, z\right), U_k\right\rangle_{H} 1_{A_{2, J_{\varepsilon}}^c}(s, z) \frac{1}{m_{\varepsilon}} g(s, z) \theta(d z) d s, \nonumber\\
\left(b^{\prime}\right) &\lim _{n \rightarrow \infty} \int_{t_{k-1}}^{t_k} \int_{K_{\varepsilon}}\left\langle f\left(s, X^g\left(s(\delta)\right), \mathcal{L}_{X^0(s)}, z\right), U^n\left(s(\delta)\right)\right\rangle_{H} 1_{A_{2, J_{\varepsilon}}^c}(s, z) \frac{1}{m_{\varepsilon}} g(s, z) \theta(d z) d s \nonumber\\
= & \int_{t_{k-1}}^{t_k} \int_{K_{\varepsilon}}\left\langle f\left(s, X^g\left(s(\delta)\right), \mathcal{L}_{X^0(s)}, z\right), U_k\right\rangle_{H} 1_{A_{2, J_{\varepsilon}}^c}(s, z) \frac{1}{m_{\varepsilon}} g(s, z) \theta(d z) d s .
\end{align*}

Therefore, taking (\ref{e542}) into account, we obtain
\begin{align}\label{e545}
& \lim _{n \rightarrow \infty}\bigg|\int_{t_{k-1}}^{t_k} \int_{K_{\varepsilon}}\langle f(s, X^g(s(\delta)),\mathcal{L}_{X^0(s)}, z), U^n(s(\delta))\rangle_{H}(g_n(s, z)-g(s, z)) 1_{A_{2, J_{\varepsilon}}^c}(s, z) \theta(d z) d s \bigg| \nonumber\\
= & \lim _{n \rightarrow \infty}  \bigg|m_{n,\varepsilon} \int_{t_{k-1}}^{t_k} \int_{K_{\varepsilon}}\langle f(s, X^g(s(\delta)),\mathcal{L}_{X^0(s)}, z), U^n(s(\delta))\rangle_{H} \frac{1}{m_{n,\varepsilon}} g_n(s, z) 1_{A_{2, J_{\varepsilon}}^c}(s, z) \theta(d z) d s  \nonumber\\
& ~~~~~~ - m_\varepsilon \int_{t_{k-1}}^{t_k} \int_{K_{\varepsilon}}\langle f(s, X^g(s(\delta)),\mathcal{L}_{X^0(s)}, z), U_k \rangle_{H} \frac{1}{m_{\varepsilon}} g(s, z) 1_{A_{2, J_{\varepsilon}}^c}(s, z) \theta(d z) d s   \bigg| \nonumber\\
= & \lim _{n \rightarrow \infty}  \bigg|m_{n,\varepsilon} \int_{t_{k-1}}^{t_k} \int_{K_{\varepsilon}}\langle f(s, X^g(s(\delta)),\mathcal{L}_{X^0(s)}, z), U^n(s(\delta))\rangle_{H} \frac{1}{m_{n,\varepsilon}} g_n(s, z) 1_{A_{2, J_{\varepsilon}}^c}(s, z) \theta(d z) d s  \nonumber\\
& ~~~~~~ - m_{n,\varepsilon} \int_{t_{k-1}}^{t_k} \int_{K_{\varepsilon}}\langle f(s, X^g(s(\delta)),\mathcal{L}_{X^0(s)}, z), U_k \rangle_{H} \frac{1}{m_{\varepsilon}} g(s, z) 1_{A_{2, J_{\varepsilon}}^c}(s, z) \theta(d z) d s   \bigg| \nonumber\\
&~~~ +  \lim _{n \rightarrow \infty} \bigg| m_{n,\varepsilon} \int_{t_{k-1}}^{t_k} \int_{K_{\varepsilon}}\langle f(s, X^g(s(\delta)),\mathcal{L}_{X^0(s)}, z), U_k \rangle_{H} \frac{1}{m_{\varepsilon}} g(s, z) 1_{A_{2, J_{\varepsilon}}^c}(s, z) \theta(d z) d s  \nonumber\\
& ~~~~~~ - m_{n,\varepsilon} \int_{t_{k-1}}^{t_k} \int_{K_{\varepsilon}}\langle f(s, X^g(s(\delta)),\mathcal{L}_{X^0(s)}, z), U^n (s(\delta))\rangle_{H} \frac{1}{m_{\varepsilon}} g(s, z) 1_{A_{2, J_{\varepsilon}}^c}(s, z) \theta(d z) d s \bigg| \nonumber\\
&~~~ +  \lim _{n \rightarrow \infty} \bigg| m_{n,\varepsilon} \int_{t_{k-1}}^{t_k} \int_{K_{\varepsilon}}\langle f(s, X^g(s(\delta)),\mathcal{L}_{X^0(s)}, z), U^n (s(\delta))\rangle_{H} \frac{1}{m_{\varepsilon}} g(s, z) 1_{A_{2, J_{\varepsilon}}^c}(s, z) \theta(d z) d s   \nonumber\\
& ~~~~~~ -  m_{\varepsilon} \int_{t_{k-1}}^{t_k} \int_{K_{\varepsilon}}\langle f(s, X^g(s(\delta)),\mathcal{L}_{X^0(s)}, z), U^n (s(\delta))\rangle_{H} \frac{1}{m_{\varepsilon}} g(s, z) 1_{A_{2, J_{\varepsilon}}^c}(s, z) \theta(d z) d s  \bigg| \nonumber\\
= & \lim _{n \rightarrow \infty} \bigg|\frac{m_{n,\varepsilon}-m_{\varepsilon}}{m_{\varepsilon}}\int_{t_{k-1}}^{t_k} \int_{K_{\varepsilon}}\langle f(s, X^g(s(\delta)),\mathcal{L}_{X^0(s)}, z), U^n(s(\delta))\rangle_{H}g(s, z) 1_{A_{2, J_{\varepsilon}}^c}(s, z) \theta(d z) d s \bigg|    \nonumber\\
= & 0 .
\end{align}
For any constant $m$ large enough, (\ref{e545}) implies \eref{re-lem4.5} holds.    \hspace{\fill}$\Box$
\end{proof}

Now, we continue the proof of $(a)$ in Condition 4.1. Let us estimate $\tilde{I}_i,$ $i=1,2,3,$ separately.
\begin{align}\label{e521}
& \tilde{I}_1 \leq \int_0^T \int_{K_{\varepsilon}} l_2(s, z) 1_{A_{2, J_{\varepsilon}}^c}(s, z)\big(\|X^g(s)\|_{H}+\mathcal{L}_{X^0(s)}(\|\cdot\|_H^2)^{\frac{1}{2}}+1\big)\|U^n(s)-U^n(s(\delta))\|_{H} \nonumber\\
&~~~~~~
\cdot\big(g_n(s, z)+g(s, z)\big) \theta(d z) d s \nonumber\\
& \leq \bigg( \sup _{s \in[0, T]}\|X^g(s)\|_{H}+ \sup _{s \in[0, T]}\|X^0(s)\|_{H}+ 1 \bigg) \nonumber\\
&~~~
\cdot J_{\varepsilon} \int_0^T \int_{K_{\varepsilon}}\|U^n(s)-U^n(s(\delta))\|_{H}\big(g_n(s, z)+g(s, z)\big) \theta(d z) d s.
\end{align}
Recall that from Remark 3.3 in \cite{BCD}, for any $a,b\in (0,\infty)$ and $\sigma \in [1,\infty)$
\begin{equation}\label{e522}
 a b \leq e^{\sigma a}+\frac{1}{\sigma}(b \log b-b+1)=e^{\sigma a}+\frac{1}{\sigma} l(b),
\end{equation}
where $l$ is defined as in (\ref{e01}), choose $a=1,b=g_n(s,z)$ or $g(s,z)$, by (\ref{e03}) and (\ref{e403}), (\ref{e521}) can be estimated by
\begin{align}\label{e523}
 \tilde{I}_1
&\leq \bigg(2 \sup _{s \in[0, T]}\|X^g(s)\|_{H}+ 1 \bigg) J_{\varepsilon} \cdot \int_0^T \int_{K_{\varepsilon}}\|U^n(s)-U^n(s(\delta))\|_{H} \nonumber \\
&~~~ \cdot \bigg(2 e^\sigma+\frac{1}{\sigma} l\big(g_n(s, z)\big)+\frac{1}{\sigma} l\big(g(s, z)\big)\bigg) \theta(d z) d s \nonumber\\
& \leq \bigg(2 \sup _{s \in[0, T]}\|X^g(s)\|_{H}+ 1\bigg) J_{\varepsilon} \cdot 2 e^\sigma \int_0^T \int_{K_{\varepsilon}}\|U^n(s)-U^n(s(\delta))\|_{H} \theta(d z) d s \nonumber\\
&~~~ +\frac{4}{\sigma} \bigg(2 \sup _{s \in[0, T]}\|X^g(s)\|_{H}+ 1 \bigg) J_{\varepsilon} \cdot \sup _{s \in[0, T]}\big(\|X^{g_n}(s)\|_{H}+\|X^g(s)\|_{H}\big) \nonumber\\
&~~~ \cdot \sup _{h \in S^N} \int_0^T \int_{K_{\varepsilon}} l\big(h(s, z)\big) \theta(d z) d s \nonumber\\
& \leq \bigg(2 \sup _{s \in[0, T]}\|X^g(s)\|_{H}+ 1 \bigg) J_{\varepsilon} \cdot 2 e^\sigma \theta(K_{\varepsilon}) \cdot \sqrt{T}  \nonumber\\
&~~~ \cdot \Bigg[\bigg(\int_0^T\|X^{g_n}(s)-X^{g_n}(s(\delta))\|_{H}^2 d s\bigg)^{\frac{1}{2}}+\bigg(\int_0^T\|X^g(s)-X^g(s(\delta))\|_{H}^2 d s\bigg)^{\frac{1}{2}}\Bigg] \nonumber\\
&~~~ +16 \bigg(\sup _{s \in[0, T]} \sup _{h \in S^N}\|X^{h}(s)\|_{H}^2 +1\bigg) J_{\varepsilon} \cdot \frac{N}{\sigma} \nonumber\\
& \leq C_{l_2,N,T}(\|x\|_H^2+1) J_{\varepsilon} \cdot e^\sigma \theta(K_{\varepsilon})  \nonumber \\
&~~~ \cdot \Bigg[\bigg(\int_0^T\|X^{g_n}(s)-X^{g_n}(s(\delta))\|_{H}^2 d s\bigg)^{\frac{1}{2}}+\bigg(\int_0^T\|X^g(s)-X^g(s(\delta))\|_{H}^2 d s\bigg)^{\frac{1}{2}}\Bigg] \nonumber\\
&~~~ +C_{l_2,N,T}(\|x\|_H^2+1) J_{\varepsilon} \cdot \frac{1}{\sigma} .
\end{align}
To estimate $\int_0^T\|X^{g_n}(s)-X^{g_n}(s(\delta))\|_H^2 ds,$ notice that from (\ref{e507}) we have
\begin{align}
X^{g_n}(s)-X^{g_n}(s(\delta))=&\int_{s(\delta)}^{s} A(t,X^{g_n}(t),\mathcal{L}_{X^0(t)})d t \nonumber\\
& +\int_{s(\delta)}^{s} \int_Z f(s, X^{g_n}(s),\mathcal{L}_{X^0(t)}, z)\big(g_n(s, z)-1\big) \theta(d z) d t .
\end{align}
Applying the chain rule to $\|X^{g_n}(s)-X^{g_n}(s(\delta))\|_H^2,$ we get
\begin{align}\label{e525}
& \|X^{g_n}(s)-X^{g_n}(s(\delta))\|_H^2\nonumber\\
= & 2 \int_{s(\delta)}^{s} \, _{V^*}\big\langle A(t,X^{g_n}(t),\mathcal{L}_{X^0(t)}), X^{g_n}(t)-X^{g_n}(s)\big\rangle_{V} d t \nonumber\\
&~~~ +2 \int_{s(\delta)}^{s}\left\langle\int_Z f(s, X^{g_n}(s),\mathcal{L}_{X^0(t)}, z)\big(g_n(s, z)-1\big) \theta(d z),X^{g_n}(t)-X^{g_n}(s)\right\rangle_{H} d t .
\end{align}
Integrating (\ref{e525}) over $[0,T]$ with respect to $s,$ we have
\begin{align}\label{e526}
& \int_0^T\|X^{g_n}(s)-X^{g_n}(s(\delta))\|_H^2 d s \nonumber\\
= & 2\int_0^T \int_{s(\delta)}^{s}\, _{V^*}\langle A(t,X^{g_n}(t),\mathcal{L}_{X^0(t)}), X^{g_n}(t)-X^{g_n}(s)\rangle_{V} d t d s \nonumber\\
& + 2\int_0^T \int_{s(\delta)}^{s}\left\langle\int_Z f(s, X^{g_n}(s),\mathcal{L}_{X^0(t)}, z)\big(g_n(s, z)-1\big) \theta(d z),X^{g_n}(t)-X^{g_n}(s)\right\rangle_{H} d t d s  \nonumber\\
=: &\tilde{I}_1^1+\tilde{I}_1^2 .
\end{align}
Then, according to the condition $(H4)$, we can obtain
\begin{align}\label{e5261}
 & \tilde{I}_1^1
 \leq  2\int_0^T \int_{s(\delta)}^{s} \|A(t,X^{g_n}(t)\mathcal{L}_{X^0(t)})\|_{V^*}
\|X^{g_n}(t)-X^{g_n}(s)\|_V d t d s \nonumber\\
\leq & 2 \Bigg [\int_0^T \int_{s(\delta)}^{s} \|A(t,X^{g_n}(t),\mathcal{L}_{X^0(t)})\|_{V^*}^{\frac{\alpha}{\alpha-1}}d t d s \Bigg ]^{\frac{\alpha-1}{\alpha}}
\Bigg[\int_0^T \int_{s(\delta)}^{s} \|X^{g_n}(t)-X^{g_n}(s)\|_V ^\alpha d t d s\Bigg] ^{\frac{1}{\alpha}}\nonumber\\
\leq & 2 \Bigg[\int_0^T \int_{s(\delta)}^{s} C(\|X^{g_n}(t)\|_V^\alpha+\mu(\|\cdot\|_H^2)+1)d t d s \Bigg]^{\frac{\alpha-1}{\alpha}}
\Bigg[\int_0^T \int_{s(\delta)}^{s} \|X^{g_n}(t)-X^{g_n}(s)\|_V ^\alpha d t d s\Bigg] ^{\frac{1}{\alpha}}\nonumber\\
\leq & C \Bigg[ \delta\bigg(\int_0^T \|X^{g_n}(t)\|_V^\alpha d t \bigg)  +\delta T \bigg(\sup_{s\in[0,T]}\|X^{g_n}(s)\|_H^2 + 1 \bigg)\Bigg]^{\frac{\alpha-1}{\alpha}} \cdot
\Bigg[\int_0^T  \|X^{g_n}(t)\|_V^\alpha d t \cdot \delta\Bigg]^{\frac{1}{\alpha}} \nonumber\\
\leq & C \Big[ C_{l_2,N,T}(\|x\|_H^2+1) (\delta+\delta T)\Big]^{\frac{\alpha-1}{\alpha}}\Bigg[ C_{l_2,N,T}(\|x\|_H^2+1) \delta\Bigg]^{\frac{1}{\alpha}}  .
\end{align}
In terms of the condition $(H5)(ii)$, it follows that
\begin{align}\label{e5262}
 &\tilde{I}_1^2
\leq  2\int_0^T \int_{s(\delta)}^{s}\int_Z \|f(s, X^{g_n}(s),\mathcal{L}_{X^0(t)}, z)\|_H|g_n(s, z)-1| \|X^{g_n}(t)-X^{g_n}(s)\|_H \theta(d z)d t d s  \nonumber\\
\leq & 2\int_0^T \int_{s(\delta)}^{s}\int_Z l_2(s,z)(\|X^{g_n}(t)\|_H+ \mathcal{L}_{X^0(t)}(\|\cdot\|_H^2)^{\frac{1}{2}}+ 1 )|g_n(s, z)-1|  \nonumber\\
& \cdot \|X^{g_n}(t)-X^{g_n}(s)\|_H \theta(d z)d t d s  \nonumber\\
\leq & 2\int_0^T \int_{s(\delta)}^{s}\big(\|X^{g_n}(t)\|_H + \mathcal{L}_{X^0(t)}(\|\cdot\|_H^2)^{\frac{1}{2}}+ 1 \big) \big(\|X^{g_n}(t)\|_H+\|X^{g_n}(s)\|_H\big) \nonumber\\
& \cdot \int_Z l_2(s,z)|g_n(s, z)-1| \theta(d z)d t d s  \nonumber\\
\leq & 4\bigg( \sup_{s \in [0,T]}\|X^{g_n}(s)\|_H+\sup_{s \in [0,T]}\|X^{0}(s)\|_H + 1\bigg) \nonumber\\
& \cdot \sup_{s \in [0,T]}\|X^{g_n}(s)\|_H \int_0^T \int_{s(\delta)}^{s}\int_Z l_2(s,z)|g_n(s, z)-1| \theta(d z)d t d s  \nonumber\\
\leq & C_{l_2,N,T}(\|x\|_H^2+1)\sup_{h \in S^N} \sup_{ s \in [0,T]} \int_{s(\delta)}^{s} \int_Z l_2(t,z)|h(t,z)|\theta (dz)dt .
\end{align}
Combining (\ref{e5261}) with (\ref{e5262}) leads to
\begin{align}\label{e5263}
& \int_0^T\|X^{g_n}(s)-X^{g_n}(s(\delta))\|_H^2 d s \nonumber\\
\leq & \textcolor{red}{C} \Big[ C_{l_2,N,T}(\|x\|_H^2+1) (\delta+\delta T)\Big]^{\frac{\alpha-1}{\alpha}}\Bigg[ C_{l_2,N,T}(\|x\|_H^2+1) \delta\Bigg]^{\frac{1}{\alpha}} \nonumber\\
&~~~ +C_{l_2,N,T}(\|x\|_H^2+1)\sup_{h \in S^N} \sup_{ s \in [0,T]} \int_{s(\delta)}^{s} \int_Z l_2(t,z)|h(t,z)|\theta (dz)dt.
\end{align}
From \cite[(3.5)]{BCD}, for any $\chi \in \mathcal{H}^{\infty} \cap L_2(\theta_T)$ we have
\begin{equation}\label{e527}
\lim_{\delta \to 0} \sup_{h \in S^N} \sup_{|l-s|\leq \delta}\int_l^s\int_Z \chi (t,z)|h(s,z)-1|\theta(dz)dt=0,
\end{equation}
so
\begin{equation}\label{e528}
\lim_{m \to \infty}\sup_{n \in \mathbb{N}}\int_0^T\left\|X^{g_n}(s)-X^{g_n}\left(s(\delta)\right)\right\|_{H}^2 d s=0.
\end{equation}
Similarly
\begin{equation}\label{e529}
\lim_{m \to \infty}\int_0^T\left\|X^g(s)-X^g\left(s(\delta)\right)\right\|_{H}^2 d s=0.
\end{equation}
Taking (\ref{e528}) and (\ref{e529}) into (\ref{e523}), we can obtain
\begin{equation}\label{e530}
\overline{\lim_{\delta \to 0}}\sup_{n \in \mathbb{N}}\tilde{I}_1\leq C_{l_2,N, T}\left(\|x\|_H^2+1\right) J_{\varepsilon} \cdot \frac{1}{\sigma},
\end{equation}
since $\sigma$ is arbitrary in $[1,\infty),$ we have
 \begin{equation}\label{e531}
\lim_{\delta \to 0}\sup_{n \in \mathbb{N}}\tilde{I}_1 = 0 .
\end{equation}

To estimate $\tilde{I}_2,$ denote
$$
A_{1,J}=\Big\{(s,z) \in [0,T] \times Z: l_1(s,z) \geq J, J \in \mathbb{R}\Big\}.
$$
Then, by condition $(H5)(i)$, we obtain
\begin{align}\label{e532}
\tilde{I}_2
\leq & \int_0^T \int_{K_{\varepsilon}} l_1(s, z)\left\|X^g(s)-X^g\left(s(\delta)\right)\right\|_{H}\left\|U^n\left(s(\delta)\right)\right\|_{H}\left|g_n(s, z)-g(s, z)\right| \theta(d z) d s \nonumber\\
= & \int_0^T \int_{K_{\varepsilon}} l_1(s, z)\left\|X^g(s)-X^g\left(s(\delta)\right)\right\|_{H}\left\|U^n\left(s(\delta)\right)\right\|_{H}\left|g_n(s, z)-g(s, z)\right| \nonumber\\
& \cdot 1_{A_{1, J}}(s, z) \theta(d z) d s \nonumber\\
& +\int_0^T \int_{K_{\varepsilon}} l_1(s, z)\left\|X^g(s)-X^g\left(s(\delta)\right)\right\|_{H}\left\|U^n\left(s(\delta)\right)\right\|_{H}\left|g_n(s, z)-g(s, z)\right| \nonumber\\
& \cdot 1_{A_{1, J}^c}(s, z) \theta(d z) d s \nonumber\\
=: &\tilde{I}_2^1+\tilde{I}_2^2  .
\end{align}
In terms of the condition (\ref{e403}), it follows that
\begin{align}\label{e5321}
\tilde{I}_2^1 \leq & \sup _{\hbar \in S^N} \sup _{s \in[0, T]} 4\left\|X^{\hbar}(s)\right\|_{H}^2 \cdot \int_0^T \int_{K_{\varepsilon}} l_1(s, z)\left(\left|g_n(s, z)\right|+|g(s, z)|\right) 1_{A_{1, J}}(s, z) \theta(d z) d s \nonumber\\
\leq & C_{l_2,N,T}\left(\|x\|_H^2+1\right) \sup _{h \in S^N} \int_0^T \int_{K_{\varepsilon}} l_1(s, z) h(s, z) 1_{A_{1, J}}(s, z) \theta(d z) d s .
\end{align}
According to (\ref{e522}), it is obvious that
\begin{align}\label{e5322}
\tilde{I}_1^2
\leq & J \sup _{\hbar \in S^N} \sup _{s \in[0, T]} 2\left\|X^{\hbar}(s)\right\|_{H}
 \int_0^T \int_{K_{\varepsilon}}\left\|X^g(s)-X^g\left(s(\delta)\right)\right\|_{H}\nonumber\\
 & \cdot \left(\left|g_n(s, z)\right|+|g(s, z)|\right) \theta(d z) d s \nonumber\\
\leq & J C_{l_2,N,T}\left(\|x\|_H^2+1\right) \int_0^T \int_{K_{\varepsilon}}\left\|X^g(s)-X^g\left(s(\delta)\right)\right\|_{H} \nonumber\\
&\cdot \Big(2e ^ \sigma + \frac{1}{\sigma} l\big(g_n(s, z)\big)+\frac{1}{\sigma} l\big(g(s, z)\big)\Big) \theta(dz)ds  \nonumber\\
\leq & J C_{l_2,N, T}\left(\|x\|_H^2+1\right) e^\sigma \theta\left(K_{\varepsilon}\right) \int_0^T\left\|X^g(s)-X^g\left(s(\delta)\right)\right\|_{H} d s \nonumber\\
& +J C_{l_2,N,T}\left(\|x\|_H^2+1\right) \int_0^T \int_{K_{\varepsilon}} \left\|X^g(s)-X^g\left(s(\delta)\right)\right\|_{H}  \nonumber\\
&~~~ \cdot \Big(\frac{1}{\sigma} l\big(g_n(s, z)\big)+\frac{1}{\sigma} l\big(g(s, z)\big)\Big) \theta(d z) d s .
\end{align}
Substituting (\ref{e5321}) and (\ref{e5322}) into (\ref{e532})
\begin{align}\label{e5323}
\tilde{I}_2
\leq & C_{l_2,N,T}\left(\|x\|_H^2+1\right) \sup _{h \in S^N} \int_0^T \int_{K_{\varepsilon}} l_1(s, z) h(s, z) 1_{A_{1, J}}(s, z) \theta(d z) d s \nonumber\\
& + J C_{l_2,N, T}\left(\|x\|_H^2+1\right) e^\sigma \theta\left(K_{\varepsilon}\right) \int_0^T\left\|X^g(s)-X^g\left(s(\delta)\right)\right\|_{H} d s \nonumber\\
& +J C_{l_2,N,T}\left(\|x\|_H^2+1\right) \int_0^T \int_{K_{\varepsilon}} \left\|X^g(s)-X^g\left(s(\delta)\right)\right\|_{H}  \nonumber\\
&~~~ \cdot \Big(\frac{1}{\sigma} l\big(g_n(s, z)\big)+\frac{1}{\sigma} l\big(g(s, z)\big)\Big) \theta(d z) d s
\nonumber\\
=: &\tilde{I}_{21}+\tilde{I}_{22}+\tilde{I}_{23}.
\end{align}
For the first term in the right hand-side of (\ref{e5323}), from Lemma \ref{L2}, we know that for the fixed $K_\varepsilon$ and any $\eta > 0,$ there exists $J_\eta > 0$ such that
\begin{equation}\label{e533}
\sup _{h \in S^N} \int_0^T \int_{K_{\varepsilon}} l_1(s, z) h(s, z) 1_{A_{1, J_\eta}}(s, z) \theta(d z) d s \leq \eta .
\end{equation}
Fixing the above $J_\eta.$ The second term in the right hand-side of (\ref{e5323}) can be controlled by
\begin{equation}\label{e534}
\tilde{I}_{22}\leq  J_\eta C_{l_2,N,T}(\|x\|_H^2+1) e^\sigma \theta(K_{\varepsilon})\bigg(\int_0^T\|X^g(s)-X^g(s(\delta))\|_{H}^2 d s \bigg)^{\frac{1}{2}} .
\end{equation}
From (\ref{e03}), we have
\begin{equation}\label{e535}
\tilde{I}_{23}\leq  C_{l_2,N,T}(\|x\|_H^2+1) J_\eta \cdot \frac{1}{\sigma} \cdot \bigg(\int_0^T\|X^g(s)-X^g(s(\delta))\|_{H}^2 d s\bigg)^{\frac{1}{2}}.
\end{equation}
Taking (\ref{e533})-(\ref{e535}) into (\ref{e5323}), and since $\sigma$ is arbitrary in $[1,\infty),$ by (\ref{e529})
\begin{equation*}
\overline{\lim _{\delta \to 0}} \sup _{n \in \mathbb{N}} \tilde{I}_2 \leq C_{l_2,N,T}\left(\|x\|_H^2+1\right) \eta .
\end{equation*}
Since $\eta$ is arbitrary in $(0 , \infty),$ we obtain
\begin{equation}\label{e536}
\lim _{\delta \to 0} \sup _{n \in \mathbb{N}} \tilde{I}_2=0 .
\end{equation}

By $(H5)(ii)$ and (\ref{e403})
\begin{align}\label{e537}
\tilde{I}_3 & \leq \sup _{1 \leq k \leq 2^m} \sup _{t_{k-1} \leq t \leq t_k} \int_{t_{k-1}}^t \int_{K_{\varepsilon}} l_2(s, z)\big(\|X^g(s(\delta))\|_{H}+ \mathcal{L}_{X^0(t)}(\|\cdot\|_H^2)^{\frac{1}{2}}+ 1 \big) \cdot\|U^n(s(\delta))\|_{H}   \nonumber\\
& ~~~ \cdot
\big(|g_n(s,z)-1|+|g(s,z)-1|\big)\theta(dz) d s  \nonumber\\
& \leq C_{l_2,N,T}(\|x\|_H^2+1) \sup _{h \in S^N} \sup _{1 \leq k \leq 2^m} \sup _{t_{k-1} \leq t \leq t_k} \int_{t_{k-1}}^t \int_{K_{\varepsilon}} l_2(s, z) \cdot|h(s, z)-1| \theta(d z) d s,
\end{align}
then applying (\ref{e527})
\begin{equation}\label{e538}
\lim _{m \rightarrow \infty} \sup _{n \in \mathbb{N}} \tilde{I}_3=0 .
\end{equation}

From (\ref{e531}), (\ref{e536}) and (\ref{e538}), we know that for any $\kappa > 0,$ there exists a $m_\kappa >0$ such that for all $m > m_\kappa$
\begin{equation}\label{e546}
 \sum_{i=1}^3 \sup _{n \in \mathbb{N}} \tilde{I}_i \leq \kappa .
\end{equation}
For the fixed $\kappa$ and $m_\kappa$ as above, we know from Lemma \ref{I4E} that
\begin{equation}\label{e547}
 \lim _{n \rightarrow \infty} \tilde{I}_4=0 .
\end{equation}
Taking (\ref{e546}) and (\ref{e547}) into account, from (\ref{e520}), we get
$$
\lim _{n \rightarrow \infty} \sup _{t \in[0, T]}\left|I_{n, 1, J_{\varepsilon}^c}(t)\right| \leq \kappa,
$$
since $\kappa$ is arbitrary in $(0,\infty)$, then
\begin{equation}\label{e548}
\lim _{n \rightarrow \infty} \sup _{t \in[0, T]}\left|I_{n, 1, J_{\varepsilon}^c}(t)\right| = 0.
\end{equation}
Taking (\ref{e548}) into account, from (\ref{e519}), we know that
$$
 \lim _{n \rightarrow \infty} \sup _{t \in[0, T]}\left\|X^{g_n}(t)-X^g(t)\right\|_{H}^2 \leq C_{l_1, N, T} \cdot\big(\varepsilon+ \varepsilon C_{l_2,N,T}(\|x\|_H^2+1) \big).
$$
Since $\varepsilon$ is arbitrary in $(0,\infty),$ it follows that
\begin{equation}\label{e549}
 X^{g_n} \rightarrow X^g \text { in } D\left([0, T] ; H\right),
\end{equation}
which indicates $(a)$ in Condition 4.1.

\medskip
\textbf{Step 2}: Next we will prove $(b)$ in Condition 4.1.

Let $N<\infty,$ $\{\varphi_\varepsilon \}_{\varepsilon >0} \subset \widetilde{\mathcal{A}}^N.$ From Lemma \ref{SEE}, we know that $Y^{\varphi_\varepsilon}:=\mathcal{G}^0(\varphi_\varepsilon)$ is the unique solution of the following equation
\begin{equation}\label{e557}
Y^{\varphi_{\varepsilon}}(t)=x+ \int_0^t A(s,Y^{\varphi_\varepsilon}(s),\mathcal{L}_{Y^0(s)}) d s+\int_0^t \int_Z f(s,Y^{\varphi_\varepsilon}(s),\mathcal{L}_{Y^0(s)},z)\big({\varphi_\varepsilon}(s,z)-1\big)\theta(dz) d s .
\end{equation}
From (\ref{CE}) and (\ref{e557}), we can get
\begin{align}\label{e558}
&X^{\varphi_\varepsilon}(t)-Y^{\varphi_{\varepsilon}}(t) \nonumber\\
= & \int_0^t \big(A(s,X^{\varphi_\varepsilon}(s),\mathcal{L}_{X^\varepsilon(s)})-A(s,Y^{\varphi_\varepsilon}(s),\mathcal{L}_{Y^0(s)})\big) d s \nonumber\\
& +\int_0^t \int_Z\big(f(s, X^{\varphi_\varepsilon}(s), \mathcal{L}_{X^\varepsilon(s)}, z)-f(s, Y^{\varphi_\varepsilon}(s), \mathcal{L}_{Y^0(s)}, z)\big)\big(\varphi_{\varepsilon}(s, z)-1\big) \theta(d z) d s \nonumber\\
&+\varepsilon \int_0^t \int_Z f(s, X^{\varphi_\varepsilon}(s-), \mathcal{L}_{X^\varepsilon(s)}, z) \widetilde{N}^{\varepsilon^{-1} \varphi_{\varepsilon}}(d z, d s) .
\end{align}

Applying It\^{o}'s formula to $\|X^{\varphi_\varepsilon}(t)-Y^{\varphi_{\varepsilon}}(t)\|_{H}^2 ,$ we have
\begin{align}\label{e559}
& \left\|X^{\varphi_\varepsilon}(t)-Y^{\varphi_{\varepsilon}}(t)\right\|_{H}^2 \nonumber \\
= & 2 \int_0^t \, _{V^*}\left\langle A(s,X^{\varphi_\varepsilon}(s),\mathcal{L}_{X^\varepsilon(s)})-A(s,Y^{\varphi_\varepsilon}(s),\mathcal{L}_{Y^0(s)}), X^{\varphi_\varepsilon}(s)-Y^{\varphi_{\varepsilon}}(s)\right\rangle_{V} d s \nonumber \\
& + 2 \int_0^t \int_Z\left\langle \big(f(s, X^{\varphi_\varepsilon}(s), \mathcal{L}_{X^\varepsilon(s)}, z)-f(s, Y^{\varphi_\varepsilon}(s), \mathcal{L}_{Y^0(s)}, z)\big)\big(\varphi_{\varepsilon}(s, z)-1\big),\right. \nonumber \\
&~~~~~ \left.X^{\varphi_\varepsilon}(s)-Y^{\varphi_{\varepsilon}}(s)\right\rangle_{H}  \theta(d z) d s     \nonumber \\
& + \varepsilon^2 \int_0^t \int_Z\left\|f\left(s, X^{\varphi_\varepsilon}(s-), \mathcal{L}_{X^\varepsilon(s)}, z\right)\right\|_{H}^2 N^{\varepsilon^{-1} \varphi_{\varepsilon}}(d z, d s) \nonumber \\
& +2 \varepsilon \int_0^t \int_Z\left\langle f(s, X^{\varphi_\varepsilon}(s-), \mathcal{L}_{X^\varepsilon(s)}, z), X^{\varphi_\varepsilon}(s-)-Y^{\varphi_{\varepsilon}}(s-)\right\rangle_{H} \widetilde{N}^{{\varepsilon^{-1}} \varphi_{\varepsilon}}(d z, d s) .
\end{align}

Then we will estimate the four terms in the right hand-side of (\ref{e559}) respectively.
For the first term, by Lemma \ref{l4.2} we can get
\begin{align}\label{e560}
& \mathbb{E} \Bigg[ \sup_{t \in [0,T] } 2 \int_0^t \, _{V^*}\left\langle A(s,X^{\varphi_\varepsilon}(s),\mathcal{L}_{X^\varepsilon(s)})-A(s,Y^{\varphi_\varepsilon}(s),\mathcal{L}_{Y^0(s)}), X^{\varphi_\varepsilon}(s)-Y^{\varphi_{\varepsilon}}(s)\right\rangle_{V} d s \Bigg]  \nonumber  \\
\leq & \mathbb{E} \int_0^T \Bigg[C \| X^{\varphi_\varepsilon}(s)-Y^{\varphi_{\varepsilon}}(s) ) \|_H^2  +C \mathbb{W}_{2,H}(\mathcal{L}_{X^\varepsilon(s)}, \mathcal{L}_{Y^0(s)})^2 \Bigg]ds \nonumber\\
\leq & C \mathbb{E} \int_0^T \Bigg[\|X^{\varphi_\varepsilon}(s)-Y^{\varphi_{\varepsilon}}(s) ) \|_H^2 + \Bigg] d s +C \mathbb{E} \int_0^T \Bigg[\|X^{\varepsilon}(s)-Y^{0}(s) ) \|_H^2 \Bigg] d s \nonumber\\
\leq & C \mathbb{E} \int_0^T \Bigg[\|X^{\varphi_\varepsilon}(s)-Y^{\varphi_{\varepsilon}}(s) ) \|_H^2 + \Bigg] d s + C_{T,\|x\|_H}.
\end{align}
For the second term, we obtian
\begin{align}\label{e561}
& \mathbb{E} \Bigg[  \sup_{t \in [0,T] } 2 \int_0^t \int_Z\left\langle \big(f(s, X^{\varphi_\varepsilon}(s), \mathcal{L}_{X^\varepsilon(s)}, z)-f(s, Y^{\varphi_\varepsilon}(s), \mathcal{L}_{Y^0(s)}, z)\big)\big(\varphi_{\varepsilon}(s, z)-1\big),\right. \nonumber \\
&~~~ \left.X^{\varphi_\varepsilon}(s)-Y^{\varphi_{\varepsilon}}(s)\right\rangle_{H}  \theta(d z) d s \Bigg]    \nonumber \\
& \leq 2 \mathbb{E} \int_0^T \int_Zl_1(s,z)\big(\| X^{\varphi_\varepsilon}(s) - Y^{\varphi_\varepsilon}(s) \|_H + \mathbb{W}_{2,H}(\mathcal{L}_{X^\varepsilon(s)}, \mathcal{L}_{Y^0(s)})\big) |\varphi_{\varepsilon}(s, z)-1|  \nonumber \\
&~~~ \cdot
\| X^{\varphi_\varepsilon}(s) - Y^{\varphi_\varepsilon}(s) \|_H \theta(d z) d s   \nonumber \\
& \leq 2 \mathbb{E} \int_0^T \big(\| X^{\varphi_\varepsilon}(s) - Y^{\varphi_\varepsilon}(s) \|_H + \mathbb{W}_{2,H}(\mathcal{L}_{X^\varepsilon(s)}, \mathcal{L}_{Y^0(s)})\big) \| X^{\varphi_\varepsilon}(s) - Y^{\varphi_\varepsilon}(s) \|_H  \nonumber \\
&~~~\cdot
\int_Z l_1(s,z) |\varphi_{\varepsilon}(s, z)-1| \theta(d z) d s   \nonumber \\
& \leq 4 \mathbb{E} \int _0^T   \| X^{\varphi_\varepsilon}(s) - Y^{\varphi_\varepsilon}(s) \|_H \mathbb{E} \|X^{\varepsilon}(s)-Y^{0}(s) ) \|_H   \int_Z l_1(t,z) |\varphi_{\varepsilon}(t, z)-1| \theta(d z) d t  \nonumber \\
& \leq 4 C_{T,\|x\|_H}\mathbb{E} \int _0^T   \| X^{\varphi_\varepsilon}(s) - Y^{\varphi_\varepsilon}(s) \|_H  h_{1, \varepsilon}(t)\theta(d z) d t,
\end{align}
where $h_{1, \varepsilon}(t) = \int_Z l_1(t,z) |\varphi_{\varepsilon}(t, z)-1| \theta(d z).$ According to (\ref{e404}), we have
\begin{equation}\label{e562}
\int_0^T h_{1, \varepsilon}(t) d t \leq C_{l_1, N} < \infty.
\end{equation}

For the third term, by $(H5)(ii)$ and (\ref{e554}) we get
\begin{align}\label{e563}
& \varepsilon^2  \mathbb{E}\Bigg[ \sup _{t \in[0, T]} \int_0^T \int_Z\left\|f\left(s, X^{\varphi_\varepsilon}(s-),\mathcal{L}_{X^\varepsilon(s)}, z\right)\right\|_{H}^2 N^{\varepsilon^{-1} \varphi_{\varepsilon}}(d z, d s)\Bigg] \nonumber\\
\leq & \varepsilon^2  \mathbb{E}\Bigg[ \int_0^T \int_Z\left\|f\left(s, X^{\varphi_\varepsilon}(s-),\mathcal{L}_{X^\varepsilon(s)}, z\right)\right\|_{H}^2 N^{\varepsilon^{-1} \varphi_{\varepsilon}}(d z, d s)\Bigg] \nonumber\\
= &  \varepsilon \mathbb{E}\Bigg[\int_0^T \int_Z\left\|f\left(s, X^{\varphi_\varepsilon}(s),\mathcal{L}_{X^\varepsilon(s)}, z\right)\right\|_{H}^2 \varphi_{\varepsilon}(s, z) \theta(d z) d s\Bigg] \nonumber\\
\leq &   \varepsilon \mathbb{E}\Bigg[\int_0^T \int_Z(\|X^{\varphi_\varepsilon}(s)\|_{H}^2+\mathcal{L}_{X^\varepsilon(s)}(\| \cdot \|_H^2)^{\frac{1}{2}}+ 1 )^2 l_2^2(s, z) \varphi_{\varepsilon}(s, z) \theta(d z) d s\Bigg] \nonumber\\
\leq & 2 \varepsilon C_{l_{2}, 2, 2, N,T,\|x\|_H} \mathbb{E}\bigg(\sup _{s \in[0, T]}\left\|X^{\varphi_\varepsilon}(s)\right\|_{H}^2+1\bigg) .
\end{align}
For the forth term, according to BDG's inequality, $(H5)(ii)$ and Young's inequality, we have
\begin{align}\label{e564}
& 2 \varepsilon \mathbb{E}\Bigg[\sup _{t \in[0, T]} \int_0^t \int_Z\langle f(s, X^{\varphi_\varepsilon}(s-),\mathcal{L}_{X^\varepsilon(s)}, z), X^{\varphi_\varepsilon}(s-)-Y^{\varphi_{\varepsilon}}(s-)\rangle_{H} \widetilde{N}^{\varepsilon^{-1} \varphi_{\varepsilon}}(d z, d s)\Bigg] \nonumber\\
\leq & 2 \varepsilon \mathbb{E}\Bigg[\int_0^T \int_Z\langle f(s, X^{\varphi_\varepsilon}(s-),\mathcal{L}_{X^\varepsilon(s)}, z), X^{\varphi_\varepsilon}(s-)-Y^{\varphi_{\varepsilon}}(s-)\rangle_{H}^2 N^{\varepsilon^{-1} \varphi_{\varepsilon}}(d z, d s)\Bigg]^{\frac{1}{2}} \nonumber\\
\leq & 2 \varepsilon \mathbb{E}\Bigg[\int_0^T \int_Z l_2^2(s, z)\big(\|X^{\varphi_\varepsilon}(s-)\|_{H}+\mathcal{L}_{X^\varepsilon(s)}(\|\cdot\|_H^2)^{\frac{1}{2}}+ 1 \big)^2 \nonumber\\
&~~~ \cdot \|X^{\varphi_\varepsilon}(s-)-Y^{\varphi_{\varepsilon}}(s-)\|_{H}^2 N^{\varepsilon^{-1} \varphi_{\varepsilon}}(d z, d s)\Bigg]^{\frac{1}{2}} \nonumber\\
\leq & 2 \varepsilon \mathbb{E}\Bigg[\int_0^T \int_Z 2 C_{T,\|x\|_H} l_2^2(s, z)\big(\|X^{\varphi_\varepsilon}(s-)\|_{H}^2+1\big) \cdot\|X^{\varphi_\varepsilon}(s-)-Y^{\varphi_{\varepsilon}}(s-)\|_{H}^2 N^{\varepsilon^{-1} \varphi_{\varepsilon}}(d z, d s)\Bigg]^{\frac{1}{2}} \nonumber\\
\leq & 4 C_{T,\|x\|_H} \varepsilon \mathbb{E}\Bigg[\sup _{s \in[0, T]}\|X^{\varphi_\varepsilon}(s)-Y^{\varphi_{\varepsilon}}(s)\|_{H}^2 \cdot \int_0^T \int_Z l_2^2(s, z)\big(\|X^{\varphi_\varepsilon}(s-)\|_{H}^2+1\big) N^{\varepsilon^{-1} \varphi_{\varepsilon}}(d z, d s)\Bigg]^{\frac{1}{2}} \nonumber\\
\leq & \frac{1}{2} \mathbb{E}\Bigg[\sup _{s \in[0, T]}\|X^{\varphi_\varepsilon}(s)-Y^{\varphi_{\varepsilon}}(s)\|_{H}^2\Bigg]
+8 \varepsilon C_{T,\|x\|_H} \mathbb{E}\Bigg[\int_0^T \int_Z l_2^2(s, z)\big(\|X^{\varphi_\varepsilon}(s)\|_{H}^2+1\big) \theta (dz)  d s \Bigg] \nonumber\\
\leq & \frac{1}{2} \mathbb{E}\Bigg[\sup _{s \in[0, T]}\|X^{\varphi_\varepsilon}(s)-Y^{\varphi_{\varepsilon}}(s)\|_{H}^2 \Bigg]\nonumber\\
& +8 \varepsilon C_{T,\|x\|_H} \mathbb{E}\Bigg[ \bigg( \sup _{s \in[0, T]} \|X^{\varphi_\varepsilon}(s)\|_{H}^2+1\bigg) \int_0^T \int_Z l_2^2(s, z) \theta (dz)  d s\Bigg] \nonumber\\
\leq & \frac{1}{2} \mathbb{E}\Bigg[\sup _{s \in[0, T]}\|X^{\varphi_\varepsilon}(s)-Y^{\varphi_{\varepsilon}}(s)\|_{H}^2\Bigg]
 +8 \varepsilon C_{l_2, 2, 2,N,T,\|x\|_H} \mathbb{E} \bigg( \sup _{s \in[0, T]} \|X^{\varphi_\varepsilon}(s)\|_{H}^2+1\bigg) .
\end{align}

From (\ref{e559})-(\ref{e564}), we have for any $\varepsilon \in (0,\frac{1}{2} e ^{(c + 4 C_{l_1,N})T} \wedge 1 ]$
\begin{align}\label{e565}
& \mathbb{E}\Bigg[\sup _{s \in[0, T]}\|X^{\varphi_\varepsilon}(s)-Y^{\varphi_{\varepsilon}}(s)\|_H^2\Bigg] \nonumber\\
\leq & \Big(2 \varepsilon C_{l_{2},2 ,2, N} +8 \varepsilon C_{{l_2},2,2,N}\Big)\mathbb{E} \Bigg[\sup _{s \in[0, T]}\|X^{\varphi_\varepsilon}(s)\|_{H}^2+1\Bigg] \cdot e^{cT+4 C_{l_1, N}} \nonumber\\
=: & \varepsilon C_{l_1,l_2,2,2,N,T} \mathbb{E}\bigg(\sup _{s \in[0, T]}\|X^{\varphi_\varepsilon}(s)\|_{H}^2+1 \bigg) .
\end{align}
Here the constant $ C_{l_1,l_2,2,2,N,T}$ is independent on $\varepsilon.$ According to Lemma \ref{L3} ,the inequality above implies that there exists $\varepsilon_0 >0$ small enough such that
\begin{equation}\label{e566}
\sup_{\varepsilon \in (0,\varepsilon_0)} \mathbb{E}\Bigg[\sup _{s \in[0, T]}\|X^{\varphi_\varepsilon}(s)\|_{H}^2\Bigg]\leq C_{l_2, N, T,\|x\|_H}< \infty .
\end{equation}

Combining (\ref{e565}) and (\ref{e566}) we know that
$$
X^{\varphi_\varepsilon} \longrightarrow Y^{\varphi_\varepsilon} \text { in } L^2(\Omega ; L^{\infty}([0, T] ; H)) \text { as } \varepsilon \rightarrow 0 ,
$$
which implies (b) in Condition 4.1.    The proof is complete. \hspace{\fill}$\Box$

\vspace{3mm}


\noindent\textbf{Data availability} Data sharing is not applicable to this article as no datasets were generated or analysed during
the current study.

\noindent\textbf{Statements and Declarations} On behalf of all authors, the corresponding author states that there is no conflict of interest.

\vspace{0.5cm}

\noindent\textbf{Acknowledgements} {The authors would like to thank professor Wei Liu for some help suggestions and valuable comments. This work is supported by NSFC (No.~12371147)  and the PAPD of Jiangsu Higher Education Institutions.}

\end{document}